\DeclareMathAlphabet{\mathpcal}{OMS}{pzc}{m}{n}
\newcommand\R{\mathbb R}
\newcommand\Z{\mathbb Z}
\newcommand\T{\mathbb T}
\newcommand\RP{\mathbb {R}\textnormal{P}}
\DeclareMathOperator{\sys}{sys}
\DeclareMathOperator{\Diff}{Diff}
\DeclareMathOperator{\Isom}{Isom}
\DeclareMathOperator{\area}{area}
\DeclareMathOperator{\Var}{Var}
\newcommand\restr[2]{{
		\left.\kern-\nulldelimiterspace 
		#1
		\vphantom{|} 
		\right|_{#2} 
}}
\newtheorem{ThmIntro}{Theorem}
\newtheorem{Def}{Definition}[section] 
\newtheorem{Thm}[Def]{Theorem}	
\newtheorem{Lemma}[Def]{Lemma}
\newtheorem{Cor}[Def]{Corollary}
\newtheorem{Rem}[Def]{Remark}
\theoremstyle{plain}
\theoremstyle{plain}
\newtheorem{Case}{Case} 
\theoremstyle{plain}
\theoremstyle{nonumberplain}
\theoremstyle{empty}
\newtheorem{proof}{}
\Crefname{equation}{Equation}{Equations}  
\Crefname{section}{Section}{Sections}
\Crefname{Claim}{Claim}{Claims}
\Crefname{Rem}{Remark}{Remarks}
\Crefname{Cor}{Corollary}{Corollaries}
\Crefname{Lemma}{Lemma}{Lemmas}
\Crefname{Thm}{Theorem}{Theorems}
\Crefname{ThmIntro}{Theorem}{Theorems}
\Crefname{Prop}{Proposition}{Propositions}
\Crefname{Def}{Definition}{Definitions}
\Crefname{enumi}{}{}
\Crefname{Case}{Case}{Cases}
\Crefname{figure}{Fig.}{Figs.}
\title{Stability of systolic inequalities for the Möbius strip and Klein bottle}
\author{Jan Eyll\thanks{Ruhr-Universität Bochum, Fakultät für Mathematik, Bochum, Germany\\ jan.eyll@rub.de}}
\date{}
\begin{document}
	\maketitle
	\begin{abstract}
		The systolic area $\alpha_{sys}$ of a nonsimply connected compact Riemannian surface $(M,g)$ is defined as its area divided by the square of the systole, where the systole is equal to the length of a shortest noncontractible closed curve. The systolic inequality due to Bavard states that on the Klein bottle, the systolic area has the optimal lower bound $\frac{2\sqrt{2}}{\pi}$. Bavard also constructed metrics of minimal systolic area in any given conformal class. 
		We give an alternative proof of these results, which also yields an estimate on the systolic defect $\alpha_{sys}-\frac{2\sqrt{2}}{\pi}$ in terms of the $L^2$-distance of the conformal factor to the metric which minimizes the systolic area. On the Möbius strip, we also prove similar estimates for metrics in fixed conformal classes.
	\end{abstract}
	\section*{Introduction}
	The \textit{systole} of a nonsimply connected Riemannian manifold $(M,g)$ is defined as the infimum of the lengths of noncontractible closed curves, denoted by $\sys(M,g)$. If $M$ is compact, then the systole is positive and there exists a noncontractible closed curve of length $\sys(M,g)$, which is necessarily a closed geodesic. If $M$ is two-dimensional, we denote the Riemannian area induced by $g$ with $\area(M,g)$. Then the \textit{systolic area} of $(M,g)$ is defined as 
	\begin{equation*}
		\alpha_{sys}(M,g):=\frac{\area(M,g)}{\sys^2(M,g)}.
	\end{equation*}
	Clearly, $\alpha_{sys}(M,g)$ is invariant under rescalings of the metric. For a set $C$ of metrics on $M$, we define 
	\begin{equation}
		\alpha_{sys}(M,C):=\inf_{g\in C} \alpha_{sys}(M,g).
	\end{equation}
	If $C$ is the set of all metrics on $M$, we simply write $\alpha_{sys}(M)=\alpha_{sys}(M,C)$. 
	
	In 1949, Loewner discovered that on the two-dimensional torus $\T^2$, the systolic area has a positive lower bound. More precisely, he proved 
	\begin{equation}
		\alpha_{sys}(\T^2)=\frac{\sqrt{3}}{2}
	\end{equation}
	and showed that this lower bound for $\alpha_{sys}(\T^2,g)$ is attained if and only if $(\T^2,g)$ is isometric to a flat equilateral torus, which is defined as the quotient of the standard Euclidean plane by a hexagonal lattice. Similar results also exist for the real projective plane $\RP^2$ (\cite{Pu1952}) and the Klein bottle $K$ (\cite{Bavard1986}), namely
	\begin{align}
		\alpha_{sys}(\RP^2)&= \frac{2}{\pi},\\
		\alpha_{sys}(K)&=\frac{2\sqrt{2}}{\pi}. \label{Eq: Bavards original ineq}
	\end{align}
	For the projective plane, the lower bound is attained uniquely (up to isometry and rescaling) by the round metric. 
	In contrast, for the Klein bottle, the lower bound is attained only by continuous, but not smooth metrics, as we will see below.
	It is also worth noting that these three examples are the only surfaces for which the value of $\alpha_{sys}(M)$  is known. 
	$\alpha_{sys}$ is known to be bounded from below by $1/2$ (\cite{Hebda1982,Gromov1983}) and by a bound that grows with the genus of $M$ (\cite{Gromov1983}), but these bounds are not sharp.
	
	In \cite{Pu1952}, Pu also established systolic inequalities for every conformal class of Riemannian metrics on the M\"obius strip, which we discuss in \Cref{Sec: Mobius inequality}. More general inequalities on the M\"obius strip can be found in \cite{Blatter1961a}, where Blatter proves inequalities relating the area to the product of the minimal lengths of curves in specific homotopy classes. Furthermore, \cite{ElMir2015} contains similar inequalities for every conformal class of Riemannian metrics on the Klein bottle. We will see that the homotopy classes considered there are relevant for our discussion of the systole on the Klein bottle, as they contain natural candidates for curves attaining the systole. 
	
	More recently, these results have been generalized to Finsler metrics (see \cite{Balacheff2024} for $\T^2$, \cite{Ivanov2011} for the projective plane and \cite{Sabourau2016} for the Klein bottle) and questions concerning the stability of the implied systolic inequalities have been studied: 
	The difference 
	\begin{equation*}
		\alpha_{sys}(M,g)-\alpha_{sys}(M)
	\end{equation*}
	is called the \textit{systolic defect}, and the goal is to understand whether a Riemannian metric $g$ with small systolic defect is necessarily close to an optimal metric $\hat{g}$, i.e. a metric for which $\alpha_{sys}(M,\hat{g})=\alpha_{sys}(M)$. In this situation, one cannot expect $g$ and $\hat{g}$ to be close in the $C^0$-sense, because modifying $\hat{g}$ by adding \enquote{spikes} of small area has little effect on the systolic area but can make the $C^0$-distance arbitrarily large. It turns out that using $L^2$-distance of conformal factors gives good results, due to its close relations to the Riemannian area.  
	
	Due to the conformal representation theorem, any metric $g$ on a closed surface is of the form $g=\phi^2g_0$, where $\phi$ is a positive function and $g_0$ a metric of constant curvature. 
	In some cases, this can be used to estimate the systolic defect: let $g_{round}$ be the metric of constant curvature $1$ on $\RP^2$ and $\phi^2g_{round}$ any metric. It is shown in \cite{Katz2020} that 
	\begin{equation*}
		\frac{2\pi \Var(\phi)}{\sys^2(\RP^2,\phi^2g_{round})}\leq \alpha_{sys}(\RP^2,\phi^2g_{round})-\alpha_{sys}(\RP^2),
	\end{equation*}
	where the variance $\Var(\phi)$ is defined as follows: 
	denote by $\mu$ the probability measure on $\RP^2$ obtained by normalizing the Riemannian area measure induced by $g_{round}$. 
	Then $E(\phi):=\int_{\RP^2} \phi\, d\mu$ is the expected value and 
	\begin{equation}
		\Var(\phi):=\int_{\RP^2} \left(\phi - E(\phi)\right)^2 d\mu = E(\phi^2)-E(\phi)^2
	\end{equation}
	is the variance of $\phi$. Because the variance measures the $L^2$-distance between $\phi$ and its mean $E(\phi)$, these estimates show that metrics with small systolic defect are $L^2$-close to metrics with constant conformal factor, (i.e. round metrics), which are precisely the optimal metrics on $\RP^2$. 
	A very similar result for the torus can be found in \cite{Horowitz2009}. 
	
	Our goal is to prove an analogous result for the Klein bottle. Here, the main difference to the projective plane and the torus is that on the Klein bottle, the metric attaining the lower bound is not of constant curvature (and consequently the conformal factor $\phi$ is not constant in this case), so one cannot expect the variance of the conformal factor as a lower bound for $\alpha_{sys}(K,\phi^2g_{flat})-\alpha_{sys}(K)$. 
	However, we will see that by changing the considered probability measure, our result can also be expressed in terms of the variance of a conformal factor (see \Cref{Rem: Klein bottle variance} below). 
	
	The space $\mathpcal{C}(K)$ of conformal classes of metrics on the Klein bottle is one-dimensional and can be parametrized by the interval $(0,+\infty)$. 
	For more details on this fact, see \Cref{Lemma: Characterization of CK}, where an explicit parametrization $(0,+\infty)\to \mathpcal C(K)$, $\beta\mapsto C_\beta$ is given.
	
	In \cite{Bavard1988}, Bavard showed that for every conformal class $C\in \mathpcal C(K)$, there exists a continuous, not necessarily smooth metric $g_C\in C$ such that 
	\begin{equation}\label{Eq: Bavards conformal equation}
		\alpha_{sys}(K,C)=\alpha_{sys}(K,g_C).
	\end{equation}
	He also proved that the metric $g_C$ with this property is unique up to rescaling and isometry. 
	An explicit description of $g_C$ can be found in \Cref{Sec: Klein bottle inequality}. 
	We will see that it is in general not of constant curvature, which is related to the fact that the isometry group of a constant curvature metric on the Klein bottle (i.e. a flat metric) does not act transitively on $K$. This makes the study of conformal systolic inequalities on $K$ more difficult than on $\RP^2$ and $\T^2$. 
	
	As a result of Bavard's work, the function $C\mapsto \alpha_{sys}(K,C)$ can be computed explicitly. It has a unique minimizer $C_0\in \mathpcal C(K)$ with the property 
	\begin{equation}
		\alpha_{sys}(K,C_0)=\inf_{C\in \mathpcal C(K)} \alpha_{sys}(K,C)=\alpha_{sys}(K)=\frac{2\sqrt{2}}{\pi}.
	\end{equation}
	The graph of this function after the identification of $\mathpcal C(K)$ with $(0,+\infty)$ is sketched in \Cref{Fig: Graph of C mapsto alpha_sys}.

	\begin{figure}
		\centering
		\begin{tikzpicture}[domain=-.1:1.8,scale=3.5]

			\draw[->] (-.1,0) -- (1.8,0) node[right] {$\beta$};
			\draw[->] (0,-.3) -- (0,1.6);
			
			\draw[dashed, thin] (-.1,0.9003)--(1.8,0.9003);
			\node[anchor=east] at  (-.1,1){$\frac{2\sqrt{2}}{\pi}$};	
			
			\draw[dashed, thin] (1.5707/2,-.1) -- (1.5707/2,1.414);
			\node[anchor=north east] at  (1.65/2,-.1){$\pi/4$};	
			
			\draw[dashed, thin] (1.762/2,-.1) -- (1.762/2,1.414);
			\node[anchor=north west] at  (1.7/2,-.1){$\beta_0$};	
			
			\draw[dashed, thin] (2.634/2,-.1) -- (2.634/2,1.414);	
			\node[anchor=north] at  (2.634/2,-.1){$\beta_1$};
			
			\node[anchor=south west] at (1.7,1.3) {$\alpha_{sys}(K,C_\beta)$};
			
			\draw[thick,domain = .4:1.5707/2]    plot (\x,{3.14159/(4*\x)}  ) ;

			\draw[ thick] plot coordinates {
				(0.88137,0.9003180995712282) (0.8813666968366763,0.900320720883254) (0.8813581167695785,0.9003262264617308) (0.8813461349411306,0.9003339385638889) (0.881330767519656,0.9003438603914445) (0.881312030673899,0.9003559950792501) (0.8812899405731268,0.9003703456946514) (0.8812645133872351,0.9003869152368437) (0.8812357652868522,0.9004057066362279) (0.881203712443443,0.9004267227537635) (0.8811683710294116,0.9004499663803215) (0.881129757218206,0.9004754402360361) (0.8811292230576441,0.9004757650883733) (0.881087887184421,0.9005031469696555) (0.8810427771039022,0.9005330891578913) (0.8809944431538475,0.9005652693047663) (0.8809429015129114,0.9005996898409635) (0.880888446115288,0.9006361554354008) (0.8808881683613072,0.9006363531231714) (0.8808302598809088,0.9006752614334298) (0.8807691922553539,0.900716416978476) (0.8807049816701463,0.9007598218890881) (0.8806476691729324,0.9007983865933548) (0.8806376443127564,0.9008054782194277) (0.8805671963727243,0.9008533879463839) (0.8804936540417608,0.9009035529689148) (0.880417033513849,0.900955975107389) (0.8804068922305764,0.9009626746717262) (0.8803373509853452,0.9010106561029266) (0.8802546226550798,0.9010675976167396) (0.8801688647244581,0.9011268012294714) (0.8801661152882205,0.9011286344078432) (0.8800800933975611,0.9011882684405373) (0.879988324881246,0.9012520006674632) (0.8799253383458646,0.9012953471049888) (0.8798935753852456,0.9013179992452242) (0.8797958611222689,0.9013862654255832) (0.8796951983081007,0.9014568003764303) (0.8796845614035087,0.9014640570083164) (0.8795916031617014,0.90152960518112) (0.8794850919053054,0.9016046808378096) (0.8794437844611529,0.90163332232412) (0.8793756807645218,0.9016820282587977) (0.879263385968432,0.901761648269862) (0.879203007518797,0.9018039870605071) (0.8791482237496883,0.9018435416095972) (0.879030210344614,0.9019277089287538) (0.8789622305764411,0.9019757432764932) (0.8789093619932999,0.9020141507895761) (0.8787856949397037,0.9021028676651413) (0.8787214536340852,0.9021484861033843) (0.8786592254317463,0.9021938599386975) (0.8785299697214116,0.9022871279030045) (0.8784806766917294,0.902322235413179) (0.8783979440648418,0.9023826717596716) (0.878263164722436,0.9024804916184993) (0.8782398997493734,0.9024970858750618) (0.8781256479589465,0.9025805874968186) (0.8779991228070175,0.902672735093866) (0.877985410043576,0.902682959318833) (0.877842467250073,0.9027876069149596) (0.8777583458646616,0.9028487763824001) (0.87769683585683,0.9028945300211724) (0.877548532146978,0.9030037282783445) (0.8775175689223057,0.9030261959284389) (0.8773975724084828,0.9031152012315923) (0.8772767919799499,0.9032042346901703) (0.8772439729342406,0.9032289483296211) (0.8770877500221739,0.9033449689240697) (0.877036015037594,0.903382964317878) (0.8769289199753258,0.9034632622688581) (0.8767952380952381,0.9035627612903306) (0.8767674991019547,0.9035838275195344) (0.8766035037156304,0.9037066637326241) (0.8765544611528822,0.9037429939677649) (0.8764369501353275,0.9038317698649805) (0.8763136842105264,0.9039241386809456) (0.876267854685519,0.9039591447731353) (0.8760962336962717,0.9040887872126517) (0.8760729072681704,0.9041061736176322) (0.8759221035033383,0.9042206958374774) (0.8758321303258146,0.9042884319527092) (0.8757454804482528,0.9043548691993016) (0.8755913533834586,0.9044719872060426) (0.875566380878422,0.9044913057469105) (0.8753848211472186,0.9046300038255458) (0.8753505764411027,0.904655865757704) (0.8752008176140749,0.9047709616762654) (0.8751097994987469,0.9048402678577437) (0.8750143866445742,0.9049141774353047) (0.8748690225563909,0.9050257207785674) (0.8748255446105445,0.9050596491334403) (0.874634307890148,0.9052073746953547) (0.8746282456140351,0.9052119961780959) (0.8744406928679749,0.9053573519390046) (0.8743874686716792,0.9053982082673625) (0.8742447159351344,0.9055095785749895) (0.8741466917293232,0.9055853152621399) (0.8740463934893453,0.9056640522059224) (0.8739059147869674,0.9057732561111628) (0.8738457419350274,0.9058207703258044) (0.8736651378446115,0.9059619767689305) (0.8736427776833913,0.9059797303193987) (0.8734375171525304,0.90614092946161) (0.8734243609022556,0.906151144083015) (0.8732299767675091,0.9063043649168627) (0.8731835839598997,0.9063405754642557) (0.8730201729604546,0.9064700337384856) (0.8729428070175438,0.9065307219877722) (0.8728081221706444,0.9066379328680942) (0.872702030075188,0.9067215495476026) (0.8725938408445978,0.9068080591349814) (0.872461253132832,0.9069130281222211) (0.872377345436163,0.9069804092555038) (0.8722204761904762,0.9071051313728357) (0.8721586524066066,0.9071549798324776) (0.8719796992481202,0.907297836280568) (0.8719377782247035,0.9073317673545733) (0.8717389223057644,0.9074911228185012) (0.8717147393668222,0.9075107681957126) (0.8714981453634085,0.9076849736550436) (0.8714895523170154,0.9076919786144716) (0.8712622335671061,0.9078753947534836) (0.8712573684210526,0.9078792800440841) (0.8710327996167757,0.9080610126388473) (0.8710165914786967,0.9080740013722762) (0.8708012669736509,0.908248828179537) (0.8707758145363409,0.9082692927399934) (0.8705676521533908,0.908438837166816) (0.870535037593985,0.90846514357214) (0.8703319716797724,0.9086310352736535) (0.8702942606516291,0.9086615448976123) (0.8700942420847785,0.908825418054145) (0.8700534837092732,0.9088584891874645) (0.869854479908682,0.9090219809429362) (0.8698127067669172,0.9090559702076363) (0.8696127017001326,0.9092207192546499) (0.8695719298245614,0.9092539828848858) (0.8693689240162421,0.909421628183317) (0.8693311528822055,0.9094525231846785) (0.8691231634226687,0.9096247028018106) (0.8690903759398496,0.9096515879999201) (0.868875436493703,0.9098299380612851) (0.8688495989974937,0.9098511750495366) (0.8686257598123517,0.9100373287906174) (0.8686088220551378,0.9100512827859892) (0.8683741499704214,0.910246869695853) (0.868368045112782,0.9102519103109104) (0.868127268170426,0.9104529456185719) (0.8681206235686042,0.9104585553596568) (0.8678864912280702,0.9106543698592859) (0.8678651972165589,0.9106723802407657) (0.8676457142857144,0.910856288590101) (0.8676078875329961,0.9108883386734479) (0.8674049373433583,0.9110587039977325) (0.8673487111457613,0.9111064248669654) (0.8671641604010025,0.9112616185634118) (0.8670876846919154,0.9113266329050388) (0.8669233834586466,0.9114650350145357) (0.8668248248178203,0.9115489567453215) (0.8666826065162906,0.9116689562805952) (0.8665601481792182,0.9117733902188725) (0.8664418295739348,0.9118733854530114) (0.866293671441315,0.911999927029638) (0.866201052631579,0.9120783257485259) (0.8660254112788609,0.9122285607539355) (0.8659602756892231,0.9122837804758391) (0.8657553843762323,0.9124592848399438) (0.8657194987468672,0.912489753005203) (0.8654836074275117,0.9126920926071961) (0.8654787218045112,0.9126962467407171) (0.8652379448621554,0.912902845388905) (0.8652100971365693,0.912926977246081) (0.8649971679197995,0.913109882849332) (0.8649348702171421,0.9131639318173447) (0.8647563909774436,0.9133174378510055) (0.8646579433929151,0.9134029492516011) (0.8645156140350878,0.9135255145929702) (0.8643793333975992,0.9136440223488465) (0.8642748370927318,0.9137341171778839) (0.8640990569750121,0.9138871437779773) (0.864034060150376,0.913943249597091) (0.8638171308791563,0.9141323060763168) (0.8637932832080201,0.914152915717114) (0.8635525062656642,0.914362849717122) (0.8635335718742977,0.9143795016491435) (0.8633117293233082,0.9145729722562793) (0.8632483967350443,0.9146287227692284) (0.8630709523809523,0.9147836308457157) (0.862961622246424,0.9148799615763745) (0.8628301754385965,0.9149948294940873) (0.8626732652039617,0.9151332100769647) (0.8625893984962406,0.9152065720146044) (0.8623833424137578,0.9153884601435134) (0.8623486215538847,0.9154188620185655) (0.8621078446115288,0.9156314860353574) (0.8620918706925641,0.9156457035142256) (0.8618670676691729,0.9158441823636523) (0.8617988668678609,0.91590493179256) (0.8616262907268171,0.9160574314125813) (0.861504347777933,0.9161661364467999) (0.8613855137844612,0.9162712365866189) (0.8612083302719462,0.9164293088096286) (0.8611447368421052,0.9164856010486917) (0.8609108312100225,0.9166944400777124) (0.8609039598997493,0.9167005277178758) (0.8606631829573934,0.9169153557070496) (0.8606118674633155,0.9169615213112878) (0.8604224060150375,0.9171306598015272) (0.8603114559140855,0.9172305434337581) (0.8601816290726817,0.9173465326366465) (0.8600096134557743,0.9175014972312923) (0.8599408521303258,0.9175629767060973) (0.8597063569930785,0.9177743733524342) (0.8597000751879699,0.9177799942394762) (0.8594592982456141,0.9179968705093574) (0.8594017034420252,0.918049162307716) (0.8592185213032582,0.9182142462397708) (0.8590956697300435,0.918325854469279) (0.8589777443609022,0.9184322024683899) (0.8587882727960402,0.9186044400705012) (0.8587369674185463,0.9186507408761662) (0.8584961904761904,0.9188696612023208) (0.8584795295904699,0.918884909205631) (0.8582554135338346,0.9190885413190972) (0.8581694570754108,0.9191672518294294) (0.8580146365914787,0.9193080109188732) (0.8578580722246335,0.9194514577568162) (0.8577738596491228,0.9195280710588333) (0.8575453920236752,0.9197375166625272) (0.8575330827067669,0.919748722518355) (0.8572923057644111,0.9199692580858927) (0.8572314334699116,0.920025418080776) (0.8570515288220552,0.9201902466154186) (0.8569162135726254,0.9203151514049217) (0.8568107518796992,0.9204118333155034) (0.8565997493530799,0.9206067058871475) (0.8565699749373433,0.9206340182893844) (0.8563291979949874,0.920856269109929) (0.8562820578445879,0.9209000706381443) (0.8560884210526317,0.9210787847736912) (0.8559631560925829,0.9211952346268024) (0.8558476441102757,0.9213019052187701) (0.8556430611546876,0.9214921866799102) (0.8556068671679198,0.9215256298409854) (0.8553660902255639,0.9217494723718449) (0.8553217901007842,0.9217909154818615) (0.855125313283208,0.9219735246501921) (0.8549993600130835,0.9220914095743697) (0.8548845363408522,0.9221981870820368) (0.8546757879861929,0.9223936573561899) (0.8546437593984962,0.9224234583375113) (0.8544029824561404,0.92264878550901) (0.8543510911271855,0.9226976470828496) (0.8541622055137844,0.9228743840582931) (0.8540252865556671,0.9230033668663857) (0.8539214285714285,0.9231005967317845) (0.8536983914038452,0.9233108046750925) (0.8536806516290727,0.9233274214656702) (0.8534398746867169,0.9235541410508339) (0.853370422816595,0.9236199483332741) (0.8531990977443609,0.9237812948248226) (0.8530413979515269,0.9239307855210079) (0.852958320802005,0.9240090651743129) (0.8527175438596492,0.9242373866251796) (0.8527113339790526,0.9242433037739147) (0.8524767669172932,0.9244654824768572) (0.8523802480824519,0.9245574904829384) (0.8522359899749373,0.9246941990298398) (0.8520481574579376,0.9248733328941332) (0.8519952130325814,0.924923533008306) (0.8517544360902257,0.9251530966458459) (0.8517150793147216,0.9251908181084594) (0.8515136591478697,0.9253827609611243) (0.8513810308750803,0.9255099330815894) (0.8512728822055138,0.9256130459005659) (0.8510460293744178,0.9258306646237187) (0.8510321052631579,0.9258439474612112) (0.850791328320802,0.9260746949783937) (0.8507100920613314,0.92615299939939) (0.8505505513784462,0.9263059326907885) (0.8503732361976755,0.9264769239273237) (0.8503097744360902,0.9265377892534281) (0.8500689974937343,0.9267699504892636) (0.8500354790586245,0.9268024245802567) (0.8498282205513784,0.9270021433025923) (0.8496968379327369,0.9271294875847929) (0.8495874436090225,0.9272349566667202) (0.849357330122017,0.9274580990212595) (0.8493466666666667,0.9274683853968507) (0.8491058897243108,0.9277016322799058) (0.8490169729419785,0.9277882448235755) (0.8488651127819549,0.9279354045711405) (0.8486757837217054,0.9281199107791265) (0.848624335839599,0.9281697929100874) (0.848383558897243,0.9284043582569903) (0.8483337798039146,0.9284530825286519) (0.8481427819548872,0.9286390917918166) (0.8479909785450166,0.9287877455661377) (0.8479020050125313,0.928874441485591) (0.8476612280701754,0.92911028348278) (0.847647397315177,0.9291238852387238) (0.8474204511278195,0.9293459806516359) (0.8473030534983764,0.929461486746615) (0.8471796741854637,0.9295822935064412) (0.8469579644924711,0.9298005351430063) (0.8469388972431078,0.929819215275616) (0.8466981203007519,0.9300560365610149) (0.8466121477092523,0.9301410153340153) (0.846457343358396,0.9302933143475471) (0.8462656205745062,0.930482912078626) (0.84621656641604,0.9305311995446914) (0.8459757894736842,0.9307692276502175) (0.8459184005280727,0.9308262099886404) (0.8457350125313283,0.9310074720221851) (0.8455705050239034,0.9311708935286416) (0.8454942355889724,0.931246321711702) (0.8452534586466165,0.9314855244401399) (0.845221951530121,0.9315169470159663) (0.8450126817042607,0.9317247368655838) (0.8448727575290765,0.9318643546206882) (0.8447719047619048,0.9319645519259383) (0.8445311278195489,0.9322048995479046) (0.8445229405174066,0.932213100365609) (0.844290350877193,0.932445081252058) (0.8441725180060912,0.9325631681262629) (0.844049573934837,0.9326858623218761) (0.8438215075205093,0.932914541630929) (0.8438087969924812,0.9329272341004752) (0.8435680200501253,0.9331684793425405) (0.8434699266004968,0.9332672044606543) (0.8433272431077694,0.9334102267767845) (0.8431177928004006,0.933621140049288) (0.8430864661654135,0.9336525606766) (0.8428456892230576,0.933894906859684) (0.8427651236891354,0.9339763316835253) (0.8426049122807018,0.9341376206949501) (0.8424119368502387,0.9343327625029634) (0.8423641353383459,0.9343809161913463) (0.84212335839599,0.9346243408879673) (0.8420582498819248,0.9346904155001637) (0.841882581453634,0.9348680208160576) (0.8417040803971403,0.9350492735207315) (0.8416418045112782,0.9351122770508544) (0.8414010275689223,0.9353567596964945) (0.8413494460236172,0.9354093192633993) (0.8411602506265664,0.935601405045509) (0.8409943644039264,0.935770535280126) (0.8409194736842105,0.9358466208101632) (0.8406786967418546,0.9360921425823874) (0.8406388531955323,0.9361329039762047) (0.8404379197994988,0.9363377523046647) (0.8402829300708435,0.9364964076103812) (0.8401971428571429,0.9365839260301172) (0.839956365914787,0.9368304697328624) (0.8399266127172669,0.9368610282949836) (0.839715588972431,0.9370770423991661) (0.839569918837259,0.9372267479960645) (0.8394748120300752,0.9373241721517818) (0.8392340350877193,0.9375717221042535) (0.8392128661483771,0.9375935485335523) (0.8389932581453634,0.9378192559036699) (0.8388554723833321,0.9379614115814136) (0.8387524812030075,0.9380673393867571) (0.8385117042606516,0.9383158813163837) (0.8384977552900374,0.9383303186678283) (0.8382709273182958,0.9385643740614522) (0.8381397326316606,0.9387002511753753) (0.8380301503759399,0.9388134086219007) (0.837789373433584,0.9390629295608239) (0.8377814221866732,0.9390711903412274) (0.837548596491228,0.9393123786974668) (0.8374228417489001,0.9394431172573623) (0.8373078195488721,0.9395623613370935) (0.8370670426065163,0.9398128495216868) (0.8370640091275692,0.939816012870779) (0.8368262656641603,0.9400632521435501) (0.8367049421473596,0.9401898579837304) (0.8365854887218045,0.9403141795347781) (0.8363456586484511,0.9405646332539656) (0.8363447117794486,0.9405656195088417) (0.8361039348370927,0.9408169771745498) (0.8359861764865714,0.9409403191949829) (0.8358631578947369,0.9410688456800833) (0.8356265135330443,0.9413168961762962) (0.835622380952381,0.9413212173753139) (0.8353816040100251,0.941573536954224) (0.8352666876748363,0.9416943444237111) (0.8351408270676691,0.9418263426504192) (0.8349067168146045,0.942072644019615) (0.8349000501253133,0.9420796417857336) (0.8346592731829574,0.9423329149898376) (0.834546618870742,0.9424517749032751) (0.8344184962406015,0.9425866536934799) (0.8341864117774251,0.9428317168711544) (0.8341777192982456,0.9428408757551034) (0.8339369423558897,0.9430950950944098) (0.8338261134846578,0.9432124495772312) (0.8336961654135339,0.9433497623926395) (0.833465741958318,0.9435939525333388) (0.833455388471178,0.9436049026651945) (0.8332146115288219,0.943860061355607) (0.8331053151802024,0.9439762051095117) (0.8329738345864661,0.9441156526387575) (0.8327448511480702,0.9443591865343445) (0.8327330576441103,0.9443717062392363) (0.8324922807017544,0.9446277981103206) (0.8323843678756883,0.9447428758953651) (0.8322515037593985,0.9448843086074241) (0.8320238833928736,0.945127252139417) (0.8320107268170426,0.9451412705218157) (0.8317699498746867,0.9453982899239373) (0.8316634157455381,0.945512294073056) (0.8315291729323309,0.9456557147406861) (0.8313029829957302,0.9458979803629574) (0.831288395989975,0.9459135798630443) (0.831047619047619,0.9461715215733524) (0.8309426032216786,0.946284289536336) (0.8308068421052631,0.9464298557322903) (0.8305822945178328,0.9466711999813782) (0.8305660651629072,0.9466886189060224) (0.8303252882205514,0.9469474780327309) (0.8302220749949066,0.9470586899476856) (0.8300845112781955,0.947206716515453) (0.8298619627799185,0.947446737546733) (0.8298437343358396,0.947466372576613) (0.8296029573934837,0.9477261444610057) (0.8295019760162321,0.9478353207523342) (0.8293621804511279,0.9479862822521459) (0.8291421328635982,0.9482244174011262) (0.829121403508772,0.9482468260745016) (0.828880626566416,0.9485075061900562) (0.8287824514981933,0.9486140051930593) (0.8286398496240601,0.9487685383228213) (0.8284229501126609,0.9490040616919049) (0.8283990726817042,0.9490299648644671) (0.8281582957393484,0.9492915487124551) (0.8280636469161492,0.949394564325772) (0.8279175187969925,0.9495534703154549) (0.8277045601343518,0.9497854903876372) (0.8276767418546366,0.9498157746667077) (0.8274359649122807,0.9500782576675919) (0.8273457080095449,0.9501768170358886) (0.8271951879699249,0.950341064012693) (0.8269871088006263,0.950568521294879) (0.826954411027569,0.9506042414449994) (0.826713634085213,0.9508676188248899) (0.8266287807831527,0.9509605800554943) (0.8264728571428571,0.9511313053757936) (0.8262707422493777,0.951352970075733) (0.8262320802005012,0.951395351391338) (0.8259913032581454,0.9516596180627225) (0.8259130115082878,0.9517456679812984) (0.8257505263157895,0.9519241805239269) (0.8255556068856406,0.9521386502662035) (0.8255097493734336,0.9521890909055978) (0.8252689724310777,0.9524542413414934) (0.8251985467239991,0.9525318932933868) (0.8250281954887217,0.9527196757072656) (0.824841849382769,0.9529253732953441) (0.824787418546366,0.9529854465685916) (0.82454664160401,0.9532514746691856) (0.8244855332382343,0.9533190663747695) (0.8243058646616541,0.9535177772721102) (0.8241296166835903,0.9537129485052088) (0.8240650877192982,0.9537844051067199) (0.8238243107769423,0.9540513040574972) (0.8237741181289812,0.9541069955317294) (0.8235835338345865,0.9543184716161085) (0.8234190560015319,0.9545011831715973) (0.8233427568922306,0.9545859533461998) (0.8231019799498747,0.9548537154664436) (0.8230644487453833,0.95489548701497) (0.8228612030075189,0.9551217451313753) (0.8227103148217256,0.9552898825256025) (0.822620426065163,0.9553900781546114) (0.822379649122807,0.9556586947350704) (0.8223566727088311,0.9556843450415631) (0.8221388721804511,0.95592758413306) (0.8220035409020868,0.9560788497759627) (0.8218980952380952,0.9561967663672035) (0.8216573182957394,0.9564662274955854) (0.8216509379140275,0.9564733718176983) (0.8214165413533835,0.9567359747705799) (0.8212988822743675,0.9568678861322075) (0.8211757644110276,0.957006004695073) (0.8209473925300314,0.9572623675622349) (0.8209349874686717,0.9572763031732459) (0.8206942105263157,0.9575469029183651) (0.820596487245186,0.9576567908286123) (0.82045343358396,0.9578177796119322) (0.8202461850012719,0.9580511305310514) (0.820212656641604,0.9580889152062797) (0.8199718796992481,0.958360354042532) (0.8198965043970325,0.9584453611489478) (0.8197311027568922,0.95863207721573) (0.8195474640485453,0.958839457042199) (0.8194903258145363,0.958904050089376) (0.8192495488721804,0.9591763130394018) (0.81919908258925,0.9592333924520325) (0.8190087719298246,0.9594488830608572) (0.8188513786699797,0.9596271415018496) (0.8187679949874687,0.9597216940803686) (0.8185272180451127,0.959994764041193) (0.8185043709589893,0.96002067819808) (0.8182864411027567,0.960268181956062) (0.8181580781419827,0.9604139764310464) (0.818045664160401,0.9605418327092622) (0.8178125189221431,0.9608070099758463) (0.8178048872180451,0.9608157030510904) (0.8175641102756892,0.9610899577224803) (0.8174677120201593,0.9611997524932427) (0.8173233333333333,0.9613644505263022) (0.8171236761742542,0.9615921775305684) (0.8170825563909775,0.9616391556276028) (0.8168417794486216,0.9619141929053636) (0.8167804301402105,0.9619842585226411) (0.8166010025062657,0.9621895308016355) (0.8164379926913993,0.9623759687926943) (0.8163602255639097,0.9624650743635224) (0.8161194486215538,0.9627408684321268) (0.8160963826188049,0.9627672815533165) (0.815878671679198,0.9630170551688156) (0.8157556187310514,0.9631581699074047) (0.8156378947368421,0.9632934416385764) (0.8154157198544286,0.9635486068491301) (0.8153971177944862,0.963570015645909) (0.8151563408521303,0.9638470032032223) (0.8150767048329166,0.9639385652649157) (0.8149155639097744,0.9641242377347352) (0.8147385925282111,0.9643280179344247) (0.8146747869674186,0.9644016550005996) (0.8144340100250627,0.9646793519250818) (0.8144014018197483,0.9647169375315647) (0.8141932330827067,0.964957440317328) (0.8140651516047277,0.9651052966254985) (0.8139524561403508,0.965235707441352) (0.8137298607981385,0.9654930676816733) (0.813711679197995,0.9655141420751225) (0.8134709022556391,0.9657930238246183) (0.8133955483327796,0.9658802230628556) (0.8132301253132832,0.9660721479900263) (0.8130622331592862,0.9662667350301828) (0.8129893483709273,0.9663514369658422) (0.8127485714285714,0.9666309587512821) (0.8127299342461497,0.9666525757442239) (0.8125077944862156,0.9669109475984788) (0.8123986705797425,0.967037717266054) (0.8122670175438597,0.9671910994337669) (0.8120684611643376,0.9674221315583389) (0.8120262406015037,0.9674714042197107) (0.8117854636591478,0.9677520722701308) (0.8117393250221331,0.9678057904864323) (0.8115446867167919,0.9680330957814031) (0.8114112811932708,0.9681886658194853) (0.8113039097744361,0.9683142718640869) (0.811084348735859,0.9685707292315653) (0.8110631328320802,0.9685955912680848) (0.8108223558897244,0.9688773861100917) (0.8107585467259932,0.9689519523027907) (0.8105815789473684,0.9691594432878834) (0.8104338942577749,0.9693323065204709) (0.8103408020050126,0.9694416449229509) (0.8101104104433338,0.969711763280266) (0.8101000250626567,0.9697239826321944) (0.8098592481203007,0.9700068708985568) (0.8097881144128444,0.9700902938873471) (0.8096184711779448,0.9702899652380874) (0.8094670253145496,0.9704678695575817) (0.8093776942355889,0.9705731983494659) (0.8091471623147753,0.9708444614187163) (0.8091369172932331,0.9708565627945626) (0.8088961403508772,0.9711404946594905) (0.808828544597952,0.9712200405115804) (0.8086553634085213,0.971424633868291) (0.808511191366632,0.9715945777912971) (0.8084145864661654,0.9717089086800694) (0.8081951218415068,0.9719680441285057) (0.8081738095238096,0.9719933126748663) (0.8079330325814537,0.9722782188101025) (0.8078803552614269,0.9723404103105959) (0.8076922556390977,0.9725634142627096) (0.8075669108834163,0.9727116470429513) (0.8074514786967418,0.9728487446976999) (0.8072548079826916,0.9730817249502063) (0.8072107017543859,0.9731342047772842) (0.8069699248120301,0.9734199927288028) (0.8069440658526783,0.9734506145775119) (0.8067291478696742,0.9737062585675351) (0.8066347038050261,0.9738182863918123) (0.8064883709273183,0.973992661267762) (0.8063267411696269,0.9741847107831345) (0.8062475939849624,0.9742791966224094) (0.8060201972946283,0.9745498580658833) (0.8060068170426065,0.9745658608224664) (0.8057660401002507,0.9748530982484398) (0.8057150915464506,0.9749136984801534) (0.8055252631578949,0.9751405910916042) (0.8054114433098019,0.9752762021930472) (0.8052844862155388,0.9754282219614736) (0.8051092719876913,0.9756373393000033) (0.8050437092731829,0.9757159882308015) (0.8048085970014458,0.975997079826139) (0.8048029323308271,0.9760038877095509) (0.8045621553884712,0.9762924337670917) (0.8045094377907229,0.9763553937275985) (0.8043213784461153,0.9765811791184027) (0.8042118138135244,0.9767122508929121) (0.8040806015037594,0.9768700684106609) (0.803915744546211,0.9770676211443678) (0.8038398245614035,0.9771591006340996) (0.8036212494835147,0.9774214742393909) (0.8035990476190477,0.9774482752456037) (0.8033582706766917,0.9777379156031154) (0.803328348138553,0.9777737798719335) (0.8031174937343357,0.9780279436689026) (0.8030370600428395,0.9781245076738716) (0.8028767167919799,0.9783181256351222) (0.8027474047462987,0.9784736272164185) (0.802635939849624,0.97860846204918) (0.8024594018172765,0.9788211080115397) (0.8023951629072682,0.9788989539351194) (0.8021730708425526,0.9791669195133813) (0.8021543859649123,0.9791896028011633) (0.8019136090225564,0.9794807178609332) (0.8018884314273527,0.9795110311197086) (0.8016728320802005,0.979772221829456) (0.8016055031953585,0.9798534121733504) (0.8014320551378447,0.9800638941377217) (0.8013243057887195,0.9801940319636551) (0.8011912781954887,0.9803557371177947) (0.8010448588680636,0.9805328597279551) (0.8009505012531328,0.9806477535529873) (0.8007671821125081,0.9808698646530412) (0.8007097243107769,0.9809399466762836) (0.8004912952196688,0.9812050158766419) (0.800468947368421,0.9812323201669481) (0.8002281704260652,0.9815250317939281) (0.8002172179056704,0.9815382824889152) (0.7999873934837093,0.981818234447153) (0.7999449699051564,0.9818696335339483) (0.7997466165413534,0.9821116253685642) (0.7996745709712992,0.9821990380112651) (0.7995058395989975,0.9824052083301821) (0.7994060408758069,0.9825264648773397) (0.7992650626566415,0.9826989874161507) (0.7991393994089349,0.982851883047124) (0.7990242857142857,0.9829929669975823) (0.7988746663794924,0.9831752613955758) (0.7987835087719298,0.9832871517027091) (0.7986118616148523,0.9834965687592019) (0.7985427318295739,0.9835815463817239) (0.7983510049609588,0.9838157739376044) (0.798301954887218,0.9838761560656333) (0.7980921162823346,0.9841328456950359) (0.7980611779448622,0.984170985918333) (0.7978352154620894,0.9844477527619628) (0.7978204010025063,0.9844660411810435) (0.7975803224019272,0.9847604638366365) (0.7975796240601504,0.9847613271081057) (0.7973388471177945,0.9850570532736383) (0.7973274570221531,0.9850709475866695) (0.7970980701754384,0.9853530048079504) (0.7970766392616807,0.9853791726506218) (0.7968572932330827,0.985649171635284) (0.7968278890780383,0.9856851076395923) (0.7966165162907268,0.985945555454707) (0.7965812264473756,0.9859887211388172) (0.7963757393483709,0.9862421573541812) (0.79633667136447,0.9862899817092762) (0.796134962406015,0.9865389776601488) (0.7960942438427323,0.986588857889305) (0.7958941854636592,0.9868360157668064) (0.7958539639142135,0.9868853181962134) (0.7956534085213033,0.9871332699424274) (0.7956158516296091,0.9871793311279111) (0.7954126315789474,0.9874307371097371) (0.7953799270582652,0.9874708651645392) (0.7951718546365915,0.987728412596926) (0.7951462102881834,0.9877598887701069) (0.7949310776942355,0.9880262898554152) (0.7949147214260263,0.9880463703941359) (0.7946903007518797,0.9883243601399321) (0.7946854805971209,0.9883302784733083) (0.7944585079454648,0.988611581433124) (0.7944495238095238,0.9886228218704536) (0.7942338236337304,0.9888902476895592) (0.7942087468671679,0.9889216296803512) (0.7940114478432673,0.9891662456507324) (0.793967969924812,0.9892206601267006) (0.7937914007741081,0.9894395437185761) (0.7937271929824561,0.9895198960837105) (0.7935737026449716,0.9897101102905131) (0.7934864160401003,0.9898193160138016) (0.7933583736932666,0.9899779137611375) (0.7932456390977444,0.9901188932784419) (0.7931454341750942,0.9902429225238993) (0.7930048621553885,0.9904185953525294) (0.7929349043652534,0.9905051049727981) (0.7927640852130325,0.9907183829282238) (0.7927268045572413,0.990764429504074) (0.7925233082706766,0.991018208891859) (0.7925211550632582,0.9910208645179104) (0.7923179762142096,0.9912743784201365) (0.7922825313283208,0.991319051682855) (0.7921172883597083,0.9915249396239341) (0.7920417543859649,0.9916199919209066) (0.7919191118680776,0.9917725165515495) (0.791800977443609,0.9919208957791095) (0.791723467126353,0.9920170776360089) (0.7915602005012531,0.9922216773786371) (0.7915303745402847,0.9922585913228377) (0.7913398545343394,0.9924970260717811) (0.7913194235588973,0.9925229059595061) (0.7911519275517018,0.9927323503585308) (0.7910786466165414,0.9928249043570394) (0.7909666140542764,0.9929645326764519) (0.7908378696741856,0.9931265789414638) (0.7907839345226894,0.9931935415383162) (0.7906039094562898,0.9934193454780352) (0.7905970927318295,0.993428018017183) (0.7904265593731505,0.9936419130523962) (0.7903563157894736,0.99373088778332) (0.7902519048100698,0.9938612128428026) (0.7901155388471178,0.9940330603433678) (0.7900799663225722,0.9940772134570149) (0.7899107644849088,0.9942898835308938) (0.7898747619047619,0.9943357209288828) (0.7897443198900591,0.9944991917301451) (0.789633984962406,0.9946387464826585) (0.789580653149731,0.9947051067520691) (0.7894197848943609,0.9949075973273068) (0.7893932080200501,0.9949415447745922) (0.7892617357731146,0.9951066322215906) (0.7891524310776943,0.9952450658743026) (0.789106526453889,0.9953021802374982) (0.7889541776233093,0.9954942102162014) (0.7889116541353384,0.9955485497468657) (0.7888047099867316,0.9956826910392228) (0.7886708771929825,0.9958518108833795) (0.7886581442682419,0.9958675916301877) (0.7885145012106565,0.9960488809565811) (0.7884301002506265,0.996156315062436) (0.788373801575521,0.9962265280315004) (0.788236066143112,0.9964005019154143) (0.7881893233082706,0.9964604007248418) (0.7881013157124348,0.9965707717179153) (0.7879695711012231,0.9967373065994759) (0.7879485463659148,0.9967644053741611) (0.7878408531459403,0.9969000757732053) (0.7877151827017773,0.9970590485066031) (0.7877077694235589,0.9970686460475424) (0.7875925806426529,0.9972141941233136) (0.7874730678612127,0.9973654820048785) (0.787466992481203,0.9973733668669627) (0.7873566652688284,0.9975128815924907) (0.7872433937955976,0.9976563623887456) (0.7872262155388471,0.9976786474753455) (0.7871332743903424,0.9977958939593895) (0.7870263280206086,0.9979314459350695) (0.7869854385964912,0.9979842437827199) (0.7869225756726658,0.9980629880130809) (0.7868220383515054,0.9981904899591107) (0.7867446616541354,0.998289309367645) (0.7867247370808403,0.9983139216089824) (0.7866306929031032,0.9984332528703947) (0.7865399268794466,0.998548453724663) (0.7865038847117795,0.9985952179338043) (0.7864524600897417,0.9986594942284531) (0.7863683136325766,0.9987663445155159) (0.7862875086252561,0.9988689747984172) (0.7862631077694235,0.9989008905855222) (0.7862100662038001,0.9989673553702658) (0.7861360075229433,0.9990614566064377) (0.7860653537561335,0.999151248966297) (0.7860223308270676,0.9992069375722021) (0.7859981260955308,0.999236702994912) (0.7859343457520065,0.9993177893247712) (0.7858740339551429,0.9993944786774918) (0.7858172119532313,0.9994667418655252) (0.7857815538847118,0.9995130858322843) (0.7857639010132708,0.9995345497938578) (0.7857141224209688,0.9995978734617103) (0.7856678974807397,0.9996566839642286) (0.7856252475157027,0.9997109524941709) (0.7855861938676825,0.9997606503435919) (0.7855507578972077,0.9998057489055194) (0.7855407769423559,0.999819451531433) (0.7855189609835107,0.9998462196756277) (0.7854908245245262,0.9998820342539034) (0.7854663699368918,0.9999131643463083) (0.7854456186559455,0.9999395817664333) (0.7854285921357279,0.9999612584371499) (0.7854153118489787,0.9999781663922518) (0.7854057992871384,0.9999902777780938) (0.7854000759603482,0.9999975648552224) (0.7853981633974483,1.0)
			};

			\draw[thick,domain = 1.762/2:2.634/2]    plot (\x,{(4/3.14159)*(exp(2*\x)-1)/(exp(2*\x)+1)}  ) ;
			\draw[thick,domain = 2.634/2:1.8]    plot (\x,{1.10266+(\x-2.634/2)/(3.14159)}  ) ;
			
		\end{tikzpicture}
		\caption{A sketch of the graph of the function $\beta \mapsto \alpha_{sys}(K,C_\beta)$. It is strictly decreasing for $\beta\leq \beta_0$, attains its minimum of $\frac{2\sqrt{2}}{\pi}$ at $\beta=\beta_0$ and is strictly increasing for $\beta\geq \beta_0.$}
		\label{Fig: Graph of C mapsto alpha_sys}
	\end{figure}
	
	For a conformal class $C\in \mathpcal C(K)$, we denote by $g^C_{flat}$ the unique flat metric in $C$ normalized in such a way that the shortest closed horizontal geodesic is of length $\pi$ (see \Cref{Lemma: Characterization of CK} and the discussion before \Cref{Rem: motivation for construction}). Then the minimizer $g_C$ of the systolic area within $C$ can be written as 
	\begin{equation}
		g_C=\phi_C^2g^C_{flat},
	\end{equation}
	for a suitable positive, continuous (but not necessarily smooth) function $\phi_C$ on $K$. We denote by $P_C:L^2(K)\to L^2(K)$ the $L^2$-orthogonal projection onto the line $\R\phi_C\subset L^2(K)$ spanned by $\phi_C$. Here, the $L^2$-scalar product of functions on $K$ is taken with respect to the Riemannian measure induced by $g^C_{flat}$. Now the main results of \Cref{Sec: Klein bottle inequality} can be stated as:
	\begin{ThmIntro}\label{Thm: Main Theorem Klein bottle}
		Let $C\in \mathpcal C(K)$ be a conformal class and $g=\phi^2g^C_{flat}\in C$ be a Riemannian metric on $K$. Then 
		\begin{equation}
			\frac{\| \phi- P_C(\phi)\|^2_{L^2}}{\sys^2(K,g)}\leq \alpha_{sys}(K,g)-\alpha_{sys}(K,C).
		\end{equation}
		In particular, $\alpha_{sys}(K,g)=\alpha_{sys}(K,C)$ holds if and only if $g$ is a constant multiple of $g_C$.
	\end{ThmIntro}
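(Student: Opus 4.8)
The plan is to reduce \Cref{Thm: Main Theorem Klein bottle} to a single inequality that is \emph{linear} in the conformal factor, and then to prove that inequality by calibrating the extremal metric $g_C$ against a measured family of its own systolic geodesics. Write $A_C:=\area(K,g_C)=\|\phi_C\|_{L^2}^2$ and $S_C:=\sys(K,g_C)$, so that $\alpha_{sys}(K,C)=A_C/S_C^2$. Since $P_C$ is the orthogonal projection onto $\R\phi_C$, the Pythagorean identity gives $\|\phi-P_C(\phi)\|_{L^2}^2=\|\phi\|_{L^2}^2-\langle\phi,\phi_C\rangle^2/\|\phi_C\|_{L^2}^2$, while $\|\phi\|_{L^2}^2=\area(K,g)$. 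Substituting into the claimed estimate and clearing the factor $\sys^2(K,g)$, a direct computation shows that \Cref{Thm: Main Theorem Klein bottle} is equivalent to the single inequality
\begin{equation*}
\langle\phi,\phi_C\rangle_{L^2}\;\geq\;\frac{\area(K,g_C)}{\sys(K,g_C)}\,\sys(K,g),
\end{equation*}
whose right-hand side squared is exactly $\alpha_{sys}(K,C)\,\|\phi_C\|_{L^2}^2\,\sys^2(K,g)$; in particular equality in the theorem holds precisely when this estimate is an equality. This step is pure algebra, and the normalisation of $g^C_{flat}$ is not yet used.

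The constant $\area(K,g_C)/\sys(K,g_C)$ is forced on us by inserting $\psi=\phi_C$, which points to a family of curves adapted to $g_C$. Using the explicit description of the minimiser in \Cref{Sec: Klein bottle inequality}, I would exhibit $K$ as (generically) swept out by a one-parameter measured family $\{\gamma_t\}_{t\in T}$ of noncontractible closed $g_C$-geodesics, each of $g_C$-length exactly $\sys(K,g_C)$; these are the natural systolic candidates singled out by the homotopy classes of \cite{ElMir2015}. The crucial point is a coarea computation identifying the occupation measure of this family: measuring lengths with the flat arclength $ds_{flat}$ and the leaves with their natural transverse measure $dt$, one obtains
\begin{equation*}
\int_T\Bigl(\int_{\gamma_t}\psi\,ds_{flat}\Bigr)\,dt\;=\;\int_K\psi\,\phi_C\,d\mu_{g^C_{flat}}\;=\;\langle\psi,\phi_C\rangle_{L^2}
\end{equation*}
for every positive $\psi$, with total transverse mass $\int_T dt=\area(K,g_C)/\sys(K,g_C)$ (read off by taking $\psi=\phi_C$, where every leaf is $g_C$-systolic). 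Granting this identity, the displayed inequality is immediate: for $\psi=\phi$ each leaf is noncontractible, so $\int_{\gamma_t}\phi\,ds_{flat}=L_g(\gamma_t)\geq\sys(K,g)$, and integrating over $t$ yields $\langle\phi,\phi_C\rangle\geq\sys(K,g)\int_T dt$.

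The hard part is precisely this calibration: verifying that the special conformal factor $\phi_C$ is the occupation density of a family of \emph{genuinely} $g_C$-systolic geodesics, and handling the locus where $g_C$ is not smooth (the ridge responsible for the kink in \Cref{Fig: Graph of C mapsto alpha_sys}), where the foliation degenerates and two families of systolic geodesics meet. I expect essentially all of the work, and the only use of the minimising property of $g_C$, to be concentrated here. For the equality statement, I would argue that equality in the displayed estimate forces $L_g(\gamma_t)=\sys(K,g)$ for almost every $t$, i.e. almost every leaf is simultaneously a $g$- and a $g_C$-systolic geodesic. Comparing the geodesic-curvature conditions for the conformal metrics $\phi^2g^C_{flat}$ and $\phi_C^2g^C_{flat}$ along these common geodesics shows that $\log(\phi/\phi_C)$ has vanishing normal derivative along every leaf; together with the length constraint $\int_{\gamma_t}(\phi-c\,\phi_C)\,ds_{flat}=0$ for $c=\sys(K,g)/\sys(K,g_C)$, this forces $\phi=c\,\phi_C$. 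Equivalently, $K$ is then foliated by $g$-systolic geodesics, so $g$ minimises the systolic area in $C$ and Bavard's uniqueness \cite{Bavard1988} gives $g=c\,g_C$, which is the asserted characterisation of the equality case.
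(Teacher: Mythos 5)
Your algebraic reduction is correct: the theorem is equivalent to the single linear estimate $(\phi,\phi_C)_{L^2}\geq \frac{\area(K,g_C)}{\sys(K,g_C)}\,\sys(K,g)$, and this is exactly the paper's projection inequality (\Cref{Lemma: Projection Lemma}, fed into \Cref{Lemma: Projection Lemma for Klein bottle}; your rotation-invariant family of leaves would even make the averaging step of \Cref{Lemma: isometry projection} unnecessary). Your sweep-out identity is also verifiable in three of the four regimes of Bavard's minimizer: for $\beta\leq \pi/4$ the vertical geodesics foliate $K_\beta$ with constant occupation density; for $\beta_0\leq\beta\leq\beta_1$ Pu's identity \eqref{Eq: Pus Möbius equation}, applied in each of the two embedded Möbius strips and smeared over the $x$-rotations $T_h$, is precisely the coarea statement you postulate; for $\beta>\beta_1$ one adds the doubly wrapping horizontal geodesics $\lambda_\tau$ of length $\pi$ on the cylindrical part.

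The genuine gap is the regime $\pi/4<\beta<\beta_0$ (\Cref{Case: round-flat thin bottle}), where the step you yourself flag as ``the hard part'' is the entire theorem and your proposal offers no argument for it. There the flat band $\{s_\beta<|y|<2\beta-s_\beta\}$ is crossed by no systolic leaf except the verticals (the doubled horizontals have length $2\pi\phi_0(s_\beta)>\pi$ because $s_\beta<\beta_1$), which forces the verticals' transverse weight to be the constant $\phi_0(s_\beta)$; the spherical caps must then be calibrated by the $\gamma_\tau$-families with the residual density $\phi_0(y)-\phi_0(s_\beta)$, i.e.\ you must solve an Abel-type inversion for the transverse measure and verify its nonnegativity --- and there is a rate mismatch at the ridge $y=s_\beta$, where the turning-point kernels of the $\gamma_\tau$ contribute at order $\sqrt{s_\beta-y}$ while the target density vanishes linearly, so the measure must degenerate there in a way you have not controlled. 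This route is in substance Bavard's original one via the minimality criterion of \cite{Bavard1992}, which the paper deliberately avoids: instead it extracts from the sub-strips $M_z$, $z\leq s_\beta$, the one-parameter family of Pu inequalities $\int_0^z\phi\,\phi_0\,dy\geq\int_0^z\phi_0^2\,dy$, adds the vertical constraint $\int_0^\beta\phi\,dy\geq\pi/4$, and combines them using the monotonicity of $\phi_\beta$ through the Dini-derivative statement \Cref{Lemma: Decreasing function Lemma} --- note that the raw constraints control $\int_0^z\sigma\phi_0\,dy$, not $\int_0^z\sigma\,dy$, which is exactly the conversion that lemma performs and that a naive positive superposition of leaf constraints does not obviously achieve. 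Finally, your closing paragraph is superfluous and partly unjustified: once the inequality (hence the theorem with its Pythagorean remainder) is proved, $\alpha_{sys}(K,g)=\alpha_{sys}(K,C)$ forces $\|\phi-P_C(\phi)\|_{L^2}=0$, i.e.\ $\phi\in\R\phi_C$, in one line; the geodesic-curvature and normal-derivative discussion presumes smoothness that $g_C$ does not have along its ridge, and invoking Bavard's uniqueness from \cite{Bavard1988} would be circular in a proof meant to reprove it.
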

	\begin{ThmIntro}\label{Thm: main Corollary Klein bottle}
		Let $C\in \mathpcal C(K)$ be a conformal class and $g=\phi^2g^C_{flat}\in C$ be a Riemannian metric on $K$. Then 
		\begin{equation}
			\alpha_{sys}(K,C)-\alpha_{sys}(K)+\frac{\| \phi- P_C(\phi)\|^2_{L^2}}{\sys^2(K,g)}\leq \alpha_{sys}(K,g)-\frac{2\sqrt{2}}{\pi}.
		\end{equation}
		In particular, $\alpha_{sys}(K,g)=\frac{2\sqrt{2}}{\pi}$ holds if and only if $C=C_0$ and if $g$ is a constant multiple of $g_{C_0}$.
	\end{ThmIntro}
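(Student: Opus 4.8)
The plan is to deduce this statement directly from \Cref{Thm: Main Theorem Klein bottle}, since the two inequalities are algebraically equivalent once the value $\alpha_{sys}(K)=\frac{2\sqrt2}{\pi}$ is inserted. Concretely, \Cref{Thm: Main Theorem Klein bottle} reads
\[
\frac{\|\phi-P_C(\phi)\|_{L^2}^2}{\sys^2(K,g)}\leq \alpha_{sys}(K,g)-\alpha_{sys}(K,C).
\]
Adding $\alpha_{sys}(K,C)$ to both sides and then subtracting $\frac{2\sqrt2}{\pi}=\alpha_{sys}(K)$ from both sides produces exactly the claimed inequality. So the inequality itself requires no new work beyond rearranging terms; it is a pure reformulation of the preceding theorem.

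For the ``in particular'' part I would argue as follows. Suppose first that $\alpha_{sys}(K,g)=\frac{2\sqrt2}{\pi}$. Then the right-hand side of the corollary's inequality vanishes, so
\[
\left(\alpha_{sys}(K,C)-\tfrac{2\sqrt2}{\pi}\right)+\frac{\|\phi-P_C(\phi)\|_{L^2}^2}{\sys^2(K,g)}\leq 0.
\]
Here the first summand is nonnegative because $\alpha_{sys}(K,C)\geq \inf_{C'\in\mathpcal C(K)}\alpha_{sys}(K,C')=\alpha_{sys}(K)=\frac{2\sqrt2}{\pi}$, and the second summand is manifestly nonnegative. A sum of two nonnegative quantities that is $\leq 0$ forces each of them to vanish. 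From $\alpha_{sys}(K,C)=\frac{2\sqrt2}{\pi}=\alpha_{sys}(K)$ together with the uniqueness of the minimizing conformal class $C_0$ established by Bavard, we obtain $C=C_0$. From $\|\phi-P_C(\phi)\|_{L^2}=0$ we obtain $\phi=P_C(\phi)\in\R\phi_C$; since $\phi>0$ and $\phi_C>0$, this means $\phi=\lambda\phi_C$ for some constant $\lambda>0$, whence $g=\phi^2g^C_{flat}=\lambda^2\phi_C^2g^C_{flat}=\lambda^2 g_C=\lambda^2 g_{C_0}$ is a constant multiple of $g_{C_0}$.

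Conversely, if $C=C_0$ and $g=c\,g_{C_0}$ for some constant $c>0$, then the scale-invariance of the systolic area gives $\alpha_{sys}(K,g)=\alpha_{sys}(K,g_{C_0})=\alpha_{sys}(K,C_0)=\frac{2\sqrt2}{\pi}$, using \Cref{Eq: Bavards conformal equation} and the fact that $C_0$ attains $\alpha_{sys}(K)$. The only external inputs are the value $\alpha_{sys}(K)=\frac{2\sqrt2}{\pi}$ and the uniqueness of the minimizer $C_0$; apart from invoking these, the argument is entirely formal, so I do not anticipate any genuine obstacle. If anything, the one point to state carefully is the nonnegativity of the first summand, which relies precisely on $C_0$ being a global minimizer over all of $\mathpcal C(K)$.
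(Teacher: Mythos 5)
Your proposal is correct and takes essentially the same route as the paper: the paper likewise derives this statement (in the form of \Cref{Cor: Bavards inequality with defect}) by rearranging the conformal estimate of \Cref{Thm: Main Theorem Klein bottle} and using that $\beta\mapsto\alpha_{sys}(K,C_\beta)$ attains its minimum $\frac{2\sqrt{2}}{\pi}$ at the unique minimizer $\beta_0$, a fact it reads off from the explicit formula for $\alpha_{sys}(K_\beta,g_\beta)$ rather than citing Bavard's uniqueness result directly. Your equality analysis (two nonnegative summands forced to vanish, uniqueness of the minimizing class, and scaling invariance for the converse) matches the paper's intended argument.
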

	\Cref{Thm: Main Theorem Klein bottle} refines the conformal systolic inequality present in \cite{Bavard1988}. It implies \eqref{Eq: Bavards conformal equation} and gives a statement about the stability of the conformal systolic inequality, because it shows that any metric $g$ for which $\alpha_{sys}(K,g)$ is close to the conformal minimum $\alpha_{sys}(K,C)$ is necessarily close to a minimizing metric in the sense that their conformal factors are $L^2$-close. 
	
	Similarly, \Cref{Thm: main Corollary Klein bottle} refines the original systolic inequality \eqref{Eq: Bavards original ineq} for the Klein bottle, established first in \cite{Bavard1986}. Here, the stability can be seen in the following way: if a metric $g$ has systolic area close to $\alpha_{sys}(K)=\frac{2\sqrt{2}}{\pi}$, then the conformal class $C$ containing $g$ has to be close to $C_0$ in the sense that $\alpha_{sys}(K,C)$ is close to $\frac{2\sqrt{2}}{\pi}$, and $g$ has to be close to $g_C$ in the above sense. 
	The main ingredient for the proof of \Cref{Thm: Main Theorem Klein bottle,Thm: main Corollary Klein bottle} is the projection inequality (\Cref{Lemma: Projection Lemma}, proved in \Cref{Sec: Projection Lemma}), which implies that the projection $P_C$ does not decrease the systole. 
	
	In \Cref{Sec: Mobius inequality}, we perform almost the same considerations for the Möbius strip $M$. 
	Since every Klein bottle contains embedded Möbius strips, it is not surprising that the methods for the Klein bottle and the Möbius strip have many similarities. 
	In fact, some of the results of \Cref{Sec: Mobius inequality} are needed in the proof of the theorems about the Klein bottle in \Cref{Sec: Klein bottle inequality}. 
	For $M$, the space $\mathpcal C(M)$ of conformal classes is one-dimensional and can be parametrized by $(0,+\infty)$. Every conformal class $C\in \mathpcal C(M)$ contains a minimizer $g_C$ of the systolic area within $C$, i.e. a metric such that 
	\begin{equation}
		\alpha_{sys}(M,g_C)=\alpha_{sys}(M,C),
	\end{equation} which is unique up to rescaling. We again distinguish suitably normalized metrics $g_{flat}^C\in C$ of constant curvature in every conformal class, write 
	\begin{equation}
		g_C=\phi_C^2g_{flat}^C
	\end{equation}
	for a suitable continuous, not necessarily smooth function $\phi_C$ on $M$, denote by $P_C$ the $L^2(M)=L^2(M,g^C_{flat})$-orthogonal projection onto the line spanned by $\phi_C$ and thus can state the main result of \Cref{Sec: Mobius inequality}: 
	\begin{ThmIntro}\label{Thm: Main result, Mobius strip}
		Let $C\in \mathpcal C(M)$ be a conformal class and $g=\phi^2 g_{flat}^C$ be a Riemannian metric on $M$. Then 
		\begin{equation}
			\frac{\| \phi- P_C(\phi)\|^2_{L^2}}{\sys^2(M,g)}\leq \alpha_{sys}(M,g)-\alpha_{sys}(M,C).
		\end{equation}
		In particular, $\alpha_{sys}(M,g)=\alpha_{sys}(M,C)$ holds if and only if $g$ is a constant multiple of $g_C$.
	\end{ThmIntro}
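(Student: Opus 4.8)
The plan is to follow the proof of \Cref{Thm: Main Theorem Klein bottle} essentially verbatim, reducing the statement to an orthogonal decomposition of the area together with a projection inequality for the systole. Write $P_C(\phi)=\lambda\phi_C$ with $\lambda=\langle\phi,\phi_C\rangle/\|\phi_C\|^2_{L^2}$; since $\phi>0$ and $\phi_C>0$, we have $\lambda>0$. Put $\psi:=\phi-P_C(\phi)$, so that $\langle\psi,\phi_C\rangle=0$ in $L^2(M)=L^2(M,g^C_{flat})$. As the $L^2$-product is taken with respect to the Riemannian measure of $g^C_{flat}$, the squared norm of a conformal factor equals the area of the associated metric, and the Pythagorean identity applied to $\phi=P_C(\phi)+\psi$ gives
\begin{equation*}
\area(M,g)=\|\phi\|_{L^2}^2=\|P_C(\phi)\|_{L^2}^2+\|\psi\|_{L^2}^2=\lambda^2\,\area(M,g_C)+\|\psi\|_{L^2}^2,
\end{equation*}
where I used $\|P_C(\phi)\|_{L^2}^2=\lambda^2\|\phi_C\|_{L^2}^2=\lambda^2\,\area(M,g_C)$.

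The crux, and the step I expect to be the main obstacle, is the projection inequality: the Möbius-strip version of \Cref{Lemma: Projection Lemma}, stating that replacing $\phi$ by its projection $P_C(\phi)$ onto $\R\phi_C$ does not decrease the systole. Since rescaling a conformal factor rescales all lengths, $\sys(M,(P_C\phi)^2g^C_{flat})=\lambda\,\sys(M,g_C)$, so this inequality reads
\begin{equation*}
\sys(M,g)\le\lambda\,\sys(M,g_C),\qquad\text{i.e.}\qquad \sys^2(M,g)\le\lambda^2\,\sys^2(M,g_C).
\end{equation*}
This is the only place where the specific geometry of $M$ and of the optimal factor $\phi_C$ enters. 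I would prove it as for the Klein bottle: exhibit a family of systolic geodesics of $g_C$ foliating $M$, along which $\phi_C$ has constant $g^C_{flat}$-weighted length, so that a Fubini/averaging argument bounds $\sys(M,\phi^2g^C_{flat})$ from above by the average of the $\phi$-lengths of these curves; the normalization of $g^C_{flat}$ and the defining property of $\phi_C$ arrange that this average equals $\lambda\,\sys(M,g_C)$, which is precisely $\sys(M,(P_C\phi)^2g^C_{flat})$.

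Granting these two facts, the estimate follows by a short computation. Using $\alpha_{sys}(M,C)=\alpha_{sys}(M,g_C)=\area(M,g_C)/\sys^2(M,g_C)$, the area decomposition, and the projection inequality,
\begin{align*}
\alpha_{sys}(M,g)-\frac{\|\psi\|_{L^2}^2}{\sys^2(M,g)}
&=\frac{\lambda^2\,\area(M,g_C)}{\sys^2(M,g)}\\
&\ge\frac{\lambda^2\,\area(M,g_C)}{\lambda^2\,\sys^2(M,g_C)}=\alpha_{sys}(M,C),
\end{align*}
which, recalling $\psi=\phi-P_C(\phi)$, is the asserted inequality. For the equality statement, if $g=c\,g_C$ for a constant $c>0$, then scale-invariance of $\alpha_{sys}$ and the minimality of $g_C$ within $C$ give $\alpha_{sys}(M,g)=\alpha_{sys}(M,C)$. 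Conversely, if $\alpha_{sys}(M,g)=\alpha_{sys}(M,C)$, the main inequality forces $\|\phi-P_C(\phi)\|_{L^2}^2=0$, hence $\phi=\lambda\phi_C$ and $g=\lambda^2g_C$. Since lengths and areas are well defined for the merely continuous metrics $g_C$, no smoothness is needed anywhere, and the argument is formally identical to the Klein-bottle case, which explains why \Cref{Sec: Mobius inequality} can share its machinery with \Cref{Sec: Klein bottle inequality}.
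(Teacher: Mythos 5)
Your overall architecture coincides with the paper's proof of the body version of this theorem (\Cref{Thm: Systolic inequality with remainder for the Mobius stip}): a Pythagorean decomposition of the area, a projection inequality for the systole, and then the same closing computation and equality discussion; those parts are correct (and, as in the paper, your chain of inequalities proves $\alpha_{sys}(M,C)=\alpha_{sys}(M,g_C)$ along the way rather than needing it as input). The genuine gap is in the one step you yourself flagged as the crux: you assert the projection inequality $\sys(M,\phi^2 g_{flat}^C)\le \sys(M,P_C(\phi)^2 g_{flat}^C)$ for \emph{arbitrary} conformal factors $\phi$, but the only integral-geometric tool available --- Pu's identity \eqref{Eq: Pus Möbius equation}, hence Pu's inequality \eqref{Eq: Pus inequality} --- is valid only for $H_\beta$-invariant metrics. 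The paper therefore proceeds in two stages: \Cref{Lemma: isometry projection} first averages over the compact isometry group $H_\beta$ (the systole does not decrease, the area drops by $\|\phi-P_{H_\beta}(\phi)\|_{L^2}^2$), and only then does \Cref{Lemma: Projection Lemma for Mobius} project the now invariant factor onto $\R\phi_\beta$; the two error terms recombine into $\|\phi-P_{\phi_\beta}(\phi)\|_{L^2}^2$ because $P_{\phi_\beta}\circ P_{H_\beta}=P_{\phi_\beta}$ and the projections are orthogonal. Your one-shot inequality is in fact true, but precisely via this detour (self-adjointness of $P_{H_\beta}$ and invariance of $\phi_C$ give $(\phi,\phi_C)_{L^2}=(P_{H_\beta}\phi,\phi_C)_{L^2}$); your sketch, which never mentions averaging over the isometry group, does not supply it.

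Your sketched justification also misdescribes the geometry in two ways that matter if one tries to carry it out. First, Pu's curves $\gamma_\tau$ do not foliate $M_\beta$ --- they all cross the core of the strip --- so the Fubini argument requires the nontrivial weight $w(\tau)=-\frac{d}{ds}\sqrt{\phi_0^2(s)-\phi_0^2(\beta)}$ evaluated at $s=\tau$, as in \eqref{Eq: Pus Möbius equation}, not a plain average over a foliation by systolic geodesics of equal weighted length. Second, for thick strips $\beta>\beta_1$ the optimal factor is $\phi_\beta=\max\{\phi_0,\tfrac12\}$ from \eqref{Eq: Def of conf factor mobius}, which is not proportional to $\phi_0$, and no single weighted family of curves does the job: the paper's proof of \Cref{Lemma: Projection Lemma for Mobius} splits the integral at $\beta_1$, applying Pu's inequality on the restricted strip $M_{\beta_1}$ (restriction cannot decrease the systole) and using the horizontal loops $\lambda_\tau$ to force $\phi\ge\tfrac12$ for $|y|\ge\beta_1$. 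Without the $H_\beta$-averaging and without this case distinction, your claim that \enquote{the average equals $\lambda\,\sys(M,g_C)$} has no proof, so the central lemma of your argument remains unestablished even though everything surrounding it matches the paper.
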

	This refines the conformal systolic inequality for the Möbius strip due to Pu, present in \cite{Pu1952} by including a stability estimate, similar to the result on the Klein bottle.
	Since $\inf_{C}\alpha_{sys}(M,C)=0$, where the infimum runs over all conformal classes on $M$, the Möbius strip does not have a systolic inequality when the conformal class is not fixed, hence a theorem analogous to \Cref{Thm: main Corollary Klein bottle} does not exist for the Möbius strip.

	\numberwithin{Def}{section}
	\numberwithin{equation}{section}
	\counterwithin{figure}{section}
	\section{The Möbius strip}\label{Sec: Mobius inequality}
	For $\beta>0$, consider $S_\beta=\R \times [-\beta, \beta]\subset \R^2$ and let $g_{flat}$ be the Riemannian metric on $S_\beta$ obtained by restricting the standard Euclidean metric to $S_\beta$. The diffeomorphism $A: \R^2 \to \R^2,$ $A(x,y)=(x+\pi,-y)$ generates the subgroup $G=\{A^k|k\in \Z\}\subset \Diff(\R^2)$. Since $A$ restricts to an isometry of $(S_\beta, g_{flat})$, $G$ acts isometrically on $S_\beta$.  Moreover, $G$ is a discrete group acting freely and properly on $S_\beta$, so $M_\beta:=S_\beta/G$ is a smooth, two-dimensional manifold, which we call the \textit{Möbius strip of width $2\beta$}. The fact that $G$ acts via isometries implies the existence of a unique Riemannian metric on $M_\beta$ such that the quotient map $p: S_\beta \to M_\beta$ is a local isometry. We denote this metric also with $g_{flat}$. Thus, $(M_\beta, g_{flat})$ is a Riemannian manifold, which we call the \textit{flat Möbius strip of width $2\beta$}. 
	Its isometry group is compact and consists of translations in the $x-$direction (induced by translations on $S_\beta$). Note that translation by $\pm\pi$ has the same effect as reflecting along the $x$-axis. 
	By the conformal representation theorem, any Riemannian metric on an arbitrary  Möbius band is conformally equivalent to $(M_\beta, g_{flat})$ for some $\beta>0$, thus it is sufficient to only consider Möbius strips of the form $(M_\beta, \phi^2 g_{flat})$, where $\phi: M_\beta \to (0,+\infty)$ is a smooth positive function. Consequently, the space $\mathpcal C(M) $ of conformal classes of metrics on the Möbius strip is one-dimensional and can be parametrized by $(0,+\infty)$ via the map $\beta \mapsto C_\beta$, where $C_\beta$ is defined as the conformal class of $(M_{\beta},g_{flat})$. To emphasize the membership to the conformal class, we sometimes also write $g_{flat}=g_{flat}^{C_\beta}$.
	
	unique function $\phi$ defined on $S_\beta$ such that $\hat\phi=\psi$. For these reasons, we will identify functions on $M_\beta$ with $G$-invariant functions on $S_\beta$ whenever suitable.  
	
	It is well-known that the projection onto metrics which are invariant under the isometry group of $(M_\beta,g_{flat})$ does not increase the systolic area. 
	More generally, let $(N,g)$ be a (nonsimply connected) two-dimensional Riemannian manifold, $\phi:N \to (0,+\infty)$ a positive smooth function such that $(N,\phi^2 g)$ has finite area and $H$ a compact subgroup of the isometry group $\Isom(N,g)$. As $H$ is a compact group, it carries a unique invariant probability measure $\eta$, called the Haar measure. The Riemannian metric $g$ induces a canonical measure on $N$, and the assumption $\area(N,\phi^2g)<+\infty$ is equivalent to $\phi \in L^2(N,g)$. 
	Using $H$, we construct a map $P_H:L^2(N,g)\to L^2(N,g)$, $\phi \mapsto P_H(\phi)$ defined by 
	\begin{equation}
		P_H(\phi)(p)=\int_H \phi(\xi(p))d\eta(\xi),
	\end{equation}
	for $p\in N$. A direct calculation involving the Fubini-Tonelli theorem and the invariance of the measures shows that $P$ is the $L^2$-orthogonal projection onto the subspace $L^2_H(N,g)=\{\phi \in L^2(N,g)| \phi\circ \xi = \phi\ \forall \xi \in H\}$ of $H$-invariant functions. Therefore
	\begin{equation}
		(P_H(\phi),\phi - P_H(\phi))_{L^2}=0,
	\end{equation}
	where $(\cdot,\cdot)_{L^2}$ denotes the scalar product on $L^2(N,g)$.  Also, if $\phi^2 g$ is a Riemannian metric (i.e. $\phi>0$ and smooth), then $P_H(\phi)$ is smooth and positive, thus $P_H(\phi)^2 g$ is again a Riemannian metric on $N$.
	When acting non-trivially, this projection reduces the systolic area:
	
	\begin{Lemma}\label{Lemma: isometry projection}
		Let $\phi : N\to (0,+\infty)$ be a smooth positive function such that $\area(N,\phi^2g)$ is finite. Then the Riemannian metric $P_H(\phi)^2 g$ satisfies
		\begin{align}
			\area(N,\phi^2 g ) &= \area (N,P_H(\phi)^2 g)+ \|\phi-P_H(\phi)\|^2_{L^2}		\label{Eq: Equality in isometry-projection}	\\
			\sys(N,\phi^2 g) &\leq \sys (N,P_H(\phi)^2 g).									\label{Eq: Projection-ineq between systoles}
		\end{align}
	\end{Lemma}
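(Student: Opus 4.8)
The plan is to treat the two assertions separately: the area identity \eqref{Eq: Equality in isometry-projection} follows essentially for free from the orthogonality relation already recorded above, while the systole estimate \eqref{Eq: Projection-ineq between systoles} carries the real content.

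For \eqref{Eq: Equality in isometry-projection}, I would first use that on a \emph{two-dimensional} manifold a conformal change multiplies the Riemannian area measure by the square of the conformal factor, so that for any $\psi$ one has $\area(N,\psi^2 g)=\int_N \psi^2\, dA_g=\|\psi\|_{L^2(N,g)}^2$, where $dA_g$ denotes the area measure of $g$. Applying this to $\psi=\phi$ and to $\psi=P_H(\phi)$ turns \eqref{Eq: Equality in isometry-projection} into the Pythagorean identity $\|\phi\|_{L^2}^2=\|P_H(\phi)\|_{L^2}^2+\|\phi-P_H(\phi)\|_{L^2}^2$, which is immediate from the orthogonality relation $(P_H(\phi),\phi-P_H(\phi))_{L^2}=0$ stated above. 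This is the only place where the hypothesis that $N$ is two-dimensional is used.

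For \eqref{Eq: Projection-ineq between systoles}, I would run the orbit-averaging argument. Fix a noncontractible closed curve $\gamma$ in $N$. For each $\xi\in H$ the curve $\xi\circ\gamma$ is again noncontractible, since $\xi$ is a homeomorphism and hence induces an automorphism of $\pi_1(N)$; and since $\xi$ is a $g$-isometry, $|(\xi\circ\gamma)'|_g=|\gamma'|_g$ pointwise, so that the $\phi^2g$-length of $\xi\circ\gamma$ equals $\int \phi(\xi(\gamma(t)))\,|\gamma'(t)|_g\,dt$. Averaging over $H$ and interchanging the two integrations by Fubini–Tonelli (the integrand is nonnegative, so no integrability assumption is needed), the inner integral over $\xi$ reproduces $P_H(\phi)(\gamma(t))$ by definition of $P_H$, giving
\begin{equation*}
\int_H \Big(\int \phi(\xi(\gamma(t)))\,|\gamma'(t)|_g\,dt\Big)\,d\eta(\xi)=\int P_H(\phi)(\gamma(t))\,|\gamma'(t)|_g\,dt,
\end{equation*}
i.e. the $\eta$-average of the $\phi^2g$-lengths of the translates $\xi\circ\gamma$ equals the $P_H(\phi)^2g$-length of $\gamma$. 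Since this average is at least $\inf_{\xi\in H}$ of those lengths, and since each $\xi\circ\gamma$ is noncontractible so that $\sys(N,\phi^2g)$ bounds each of these lengths from below, I obtain that $\sys(N,\phi^2g)$ is at most the $P_H(\phi)^2g$-length of $\gamma$. Taking the infimum over all noncontractible $\gamma$ yields \eqref{Eq: Projection-ineq between systoles}.

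The points I would be careful about are that $\xi\circ\gamma$ stays noncontractible for \emph{every} $\xi\in H$ (which uses only that $\xi$ is a homeomorphism) and the measurability of $(\xi,t)\mapsto\phi(\xi(\gamma(t)))\,|\gamma'(t)|_g$ needed for Fubini–Tonelli (guaranteed by continuity of $\phi$ and of the action). No geodesic or length-minimizing curve needs to be attained anywhere, because the argument rests only on the elementary inequality ``infimum $\le$ average''; hence I expect the main obstacle to be careful bookkeeping rather than any conceptual difficulty.
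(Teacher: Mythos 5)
Your proposal is correct and follows essentially the same route as the paper's proof: the area identity via $\area(N,\psi^2g)=\|\psi\|_{L^2}^2$ combined with the Pythagorean theorem for the orthogonal projection $P_H$, and the systole bound via averaging the $\phi^2g$-lengths of the translates $\xi\circ\gamma$ over the Haar measure, interchanging integrals by Fubini--Tonelli, and bounding each $L_{\phi^2g}(\xi\circ\gamma)$ below by $\sys(N,\phi^2g)$ since $\xi\circ\gamma$ remains noncontractible. The only cosmetic difference is that you insert the intermediate step ``average $\geq$ infimum'' where the paper bounds each term of the average by the systole directly; the substance is identical.
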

	\begin{proof}
		The equality in \eqref{Eq: Equality in isometry-projection} follows from the facts that $P_H$ is an $L^2$-orthogonal projection and that the area of a conformal metric is given by the squared $L^2$-norm of the conformal factor:
		\begin{align*}
			\area(N,\phi^2 g)&= \|\phi\|^2_{L^2}=\|P_H(\phi)+\phi-P_H(\phi)\|^2_{L^2}\\
			&=\|P_H(\phi)\|^2_{L^2}+\|\phi-P_H(\phi)\|^2_{L^2}\\
			&=\area (N,P_H(\phi)^2 g)+ \|\phi-P_H(\phi)\|^2_{L^2}.
		\end{align*}
		The inequality between the systoles also follows from a calculation: let $\gamma:[0,1]\to N$ be a smooth noncontractible closed curve. Using the fact that every $\xi \in H$ is an isometry of $(N,g)$, we find
		\begin{align*}
			L_{P_H(\phi)^2 g}(\gamma) 	&=\int_0^1 \sqrt[]{P_H(\phi)^2 ( \gamma(t))\,  g_{\gamma(t)}(\gamma'(t),\gamma'(t))} dt \\
			&=\int_0^1 \int_H \phi(\xi(\gamma(t)))d\eta(\xi)\ 	\sqrt[]{g_{\gamma(t)}(\gamma'(t),\gamma'(t))} dt\\
			&=\int_H \int_0^1  \phi(\xi(\gamma(t)))\	\sqrt[]{g_{\gamma(t)}(\gamma'(t),\gamma'(t))} dt d\eta(\xi) \\
			&=\int_H \int_0^1  \phi(\xi\circ\gamma(t))\	\sqrt[]{g_{\xi\circ\gamma(t)}((\xi\circ\gamma)'(t),(\xi\circ\gamma)'(t))} dt d\eta(\xi) \\
			&=\int_H L_{\phi^2 g}(\xi\circ \gamma) d\eta(\xi) \\
			&\geq \int_H \sys(N,\phi^2 g) d\eta(\xi) = \sys(N,\phi^2 g).
		\end{align*}
		Here, the inequality $L_{\phi^2 g}(\xi\circ \gamma)\geq \sys(N,\phi^2 g)$ holds, because $\xi \circ \gamma$ is again a smooth noncontractible closed curve. Now, by taking the infimum over all noncontractible closed curves $\gamma$, we obtain \eqref{Eq: Projection-ineq between systoles}.
	\end{proof}
	The result of \Cref{Lemma: isometry projection}, (albeit with an inequality and without the term $\|\phi-P_H(\phi)\|^2_{L^2} $ in \eqref{Eq: Equality in isometry-projection}) is the first step in the proofs of Loewner's Torus inequality, the systolic inequality for $\RP^2$  (see \cite{Pu1952}) and Bavard's inequality for the Klein bottle (see \cite{Bavard1986}). In its present form, and with $H=\Isom(N,g)$, it also can be used to prove the estimates on the systolic defect on the torus and $\RP^2$ which appear in \cite{Horowitz2009} and \cite{Katz2020}. Because on the flat Möbius strip and Klein bottle, the isometry group does not act transitively, it is necessary to employ a second projection in order to minimize the systolic area, which will be discussed below and in \Cref{Sec: Klein bottle inequality}.
	
	\begin{figure}
		\centering
		\begin{tikzpicture}[domain=-1:6,scale=1.5]

			\draw[->] (-1,0) -- (6.3,0) node[right] {$y$};
			\draw[->] (0,-.3) -- (0,2.6);
			
			\draw[thin](2,-.08) -- (2,.08);	
			\draw[thin](4,-.08) -- (4,.08);
			\draw[thin](6,-.08) -- (6,.08);			
			
			\node[anchor=north] at  (2,-.1){$1$};	
			\node[anchor=north] at  (4,-.1){$2$};	
			\node[anchor=north] at  (6,-.1){$3$};	
			
			\draw[dashed, thin] (-1,1)--(6.3,1);
			\node[anchor=east] at  (-1,1){$\frac{1}{2}$};	
			
			\draw[dashed, thin] (-1,2)--(6.3,2);
			\node[anchor= east] at  (-1,2){$1$};	
			
			\draw[dashed, thin] (1.762,-.1) -- (1.762,1.414);
			\node[anchor=north] at  (1.762,-.1){$\beta_0$};	
			
			\draw[dashed, thin] (2.634,-.1) -- (2.634,1);	
			\node[anchor=north] at  (2.634,-.1){$\beta_1$};
			
			\node[anchor=south west] at (2,1.3) {$\phi_0$};
			\draw[thick]    plot (\x,{4*exp(\x/2)/(1+exp(\x))}  ) ;
			
		\end{tikzpicture}
		\caption{The graph of the function $y\mapsto \phi_0(y)$. $\beta_1:=\log(2+\sqrt[]{3})$ satisfies $\phi_0(\beta_1)=\frac{1}{2}$.
			$\beta_0:=\log(1+\sqrt[]{2})$ satisfies $\int_0^{\beta_0}\phi_0(y)dy=\frac{\pi}{4}$ and becomes important on the Klein bottle (see \Cref{Sec: Klein bottle inequality}).}
		\label{Fig: Graph of phi_0}
	\end{figure}

	As mentioned above, the isometry group $H_\beta:=\Isom(M_\beta,g_{flat})$ consists of (the maps induced by) translations in the $x$-direction, and includes the reflection along the $x$-axis. Thus, a metric $g=\phi^2g_{flat}^{C_\beta}$ is $H_\beta$-invariant if and only if $\phi$ is constant in the $x$-coordinate and the function $y\mapsto \phi(x,y)$ is even. 
	On $M_\beta$, Pu constructed a special, $H_\beta$-invariant metric $g_0:=\phi_0^2 g_{flat}$ using the function 
	\begin{equation}\label{Eq: Definition of phi_0}
		\phi_0:M_\beta \to (0,+\infty),\quad \phi_0(x,y)=\frac{2e^y}{1+e^{2y}}.
	\end{equation}
	Since $\phi_0$ is constant in $x$, we also use the symbol $\phi_0$ for the function $[-\beta,\beta]\to (0,+\infty)$, $y\mapsto \phi_0(y)=\phi_0(x,y)$. 
	It is smooth, even, decreasing for $y\geq 0 $ and goes to zero for $y\to \pm \infty$. Its graph is sketched in \Cref{Fig: Graph of phi_0}.
	
	Pu also defined a family of curves $\{\gamma_\tau\}_{\tau \in [0,\beta)}$ given by
	\begin{align}
		\gamma_0:[0,\pi]\to M_\beta, \quad \gamma_0(t)=p(t,0)
	\end{align}
	and
	\begin{align}
		\gamma_\tau : [-\tau,\tau] \to M_\beta, \quad \gamma_\tau (t) = p  \left(\pi/2 + \int_0^t \frac{\phi_0(\tau)}{\sqrt{\phi_0^2(s)-\phi_0^2(\tau)}}ds ,-t\right)
	\end{align}
	for $\tau>0$. Here, $p:S_\beta\to M_\beta$ denotes the quotient map. Each $\gamma_\tau$ is a closed, noncontractible curve of length $L_{g_0}(\gamma_\tau)=\pi$. The metric $g_0$ and the curves $\gamma_\tau$ have a clear geometric interpretation:  using the map 
	\begin{align*}
		F:[-\pi/2,\pi/2]\times [-\beta,\beta]&\to [-\pi/2,\pi/2]\times [-\arcsin(\tanh\beta),\arcsin(\tanh \beta)], \\
		(x,y)&\mapsto (x,\arcsin(\tanh y))
	\end{align*}
	and spherical coordinates 
	\begin{align*}
		\Psi:\R^2 &\to S^2\\
		(\theta_1,\theta_2)&\mapsto ( \cos(\theta_1)\cos(\theta_2),\sin(\theta_1)\cos(\theta_2),\sin(\theta_2) )
	\end{align*}
	on $S^2$, one can show that $(M_\beta, g_0)$ is isometric to the Möbius strip obtained from the region $\Psi( [-\pi/2,\pi/2 ]\times [-\Theta_\beta,\Theta_\beta])\subset S^2$ by identifying antipodal points on the boundary $\Psi( \{\pm \pi/2 \}\times [-\Theta_\beta,\Theta_\beta])$, together with the (restriction of the) standard round metric on $S^2$. Here, $\Theta_\beta:=\arcsin(\tanh\beta)\in (0,\pi/2)$. Under this isometry, the curves $\gamma_\tau$ are mapped to segments of great circles connecting the antipodal points on the boundary (compare \Cref{Fig: Isometry of Mobius strip}). 
	
	\begin{figure}
		\centering
		
		\begin{tikzpicture}

			\begin{scope}[,decoration={
					markings,
					mark=at position 0.45 with {\arrow{<}}}
				] 
				\draw[very thick, postaction={decorate}] (6,-.2)--(6,3.2);
				\draw[very thick, postaction={decorate}] (0,3.2)--(0,-.2);
				
			\end{scope}
			\begin{scope}[decoration={
					markings,
					mark=at position 0.1 with {\arrow{>}}}
				] 
				\draw[very thick,postaction={decorate}] (14,1.5) arc(0:-41:3cm);
				\draw[very thick,postaction={decorate}] (8,1.5) arc(180:180-41:3cm);
				
			\end{scope}
			\draw[very thick](0,-.2)--(6,-.2);
			\draw[very thick] (0,3.2)--(6,3.2);	
			\draw[thin](0,1.5)--(6,1.5);

			\draw[very thick] (14,1.5) arc(0:41:3cm);
			
			\draw[thin] (14,1.5) arc(0:360:3cm);

			\draw[very thick] (8,1.5) arc(180:180+41:3cm);
			
			\draw (14,1.5) arc(0:-180:3cm and .9cm);
			\draw [dotted](14,1.5) arc(0:180:3cm and .9cm);

			\draw[dotted] (11+3*0.7549,1.5+3*0.6557 -0.03 ) arc(0:180:3*0.7549cm and .5*0.7549cm);
			\draw[very thick] (11+3*0.7549,1.5+3*0.6557 -0.03 ) arc(0:-180:3*0.7549cm and .5*0.7549cm);
			
			\draw[very thick] (11+3*0.7549,1.5-3*0.6557+0.03) arc(0:-180:3*0.7549cm and .5*0.7549cm);
			\draw[dotted] (11+3*0.7549,1.5-3*0.6557+0.03) arc(0:180:3*0.7549cm and .5*0.7549cm);
			
			\node[anchor=mid] at (7,1.5){$\stackrel{\Psi\circ F}{\to}$};
			\node[anchor=mid] at (3.6,.6){$\gamma_\tau$};
			
			\node at  (8.25,2.7)[circle,fill,inner sep=1.5pt]{};
			\node at  (13.75,0.3)[circle,fill,inner sep=1.5pt]{};
			\draw[dashed,thick,rotate around ={156:(8.25,2.7)}] (8.25,2.7) arc(0:180:3cm and 1.17cm);
			\node[anchor=mid] at (10,1.5){$\Psi \circ F(\gamma_\tau)$};
			
			\draw[dashed,thick] (0,2.5) to[out=0, in=140](3,1.5) to[out=-40, in=180](6,.5);
			\node at  (0,2.5)[circle,fill,inner sep=1.5pt]{};
			\node at  (6,0.5)[circle,fill,inner sep=1.5pt]{};

		\end{tikzpicture}

		\caption{$\Psi\circ F$ maps the fundamental region of the Möbius strip $(M_\beta, g_0)$ isometrically to a subset of the round sphere. The geodesic $\gamma_\tau$ is mapped to a half great circle connecting antipodal points.}
		\label{Fig: Isometry of Mobius strip}
	\end{figure}

	For $\beta\leq \beta_1:=\log(2+\sqrt{3})$, the curves $\gamma_\tau$ realize the systole on $(M_\beta,g_0)$ and we thus have $\sys(M_\beta,g_0)=\pi$. The fact that no shorter noncontractible closed curve exists can be seen in the following way: closed curves in the free homotopy class of $\gamma_\tau$ (or in the homotopy class of the curve obtained by reversing the orientation of $\gamma_\tau$) have to connect antipodal points in $S^2$ (when considering their image under the isometry $F$), and thus are of length at least $\pi$. For a closed noncontractible curve $\gamma:[0,1]\to M_\beta$ in a different homotopy class, we can consider the lift to a curve $\hat{\gamma}:[0,1]\to S_\beta$ of the form $\hat{\gamma}(t)=(\hat x (t), \hat y(t))$. It satisfies $\hat x (1)- \hat x (0)=\pi k $ with $k\in \Z$, $|k|\geq 2$. The inequality $|k|\geq 2$ holds,  because $\gamma$ is neither homotopic to a point (corresponding to $k=0$), nor to $\gamma_\tau$ or its reverse (corresponding to $k=\pm 1$).
	Furthermore, $\beta\leq \beta_1$ is equivalent to $\phi_0(y)\geq \frac{1}{2}$ for all $y\in [-\beta,\beta]$, and thus 
	\begin{align*}
		L_{g_0}(\gamma)=L_{g_0}(\hat \gamma)=\int_0^1 \underbrace{\phi_0(\hat y (t))}_{\geq 1/2}\ \underbrace{\sqrt[]{\hat x ' (t)^2 + \hat y' (t)^2} }_{\geq |\hat x' (t)|}dt \geq \frac{1}{2}\int_0^1 |\hat x ' (t)| dt \geq \frac{\pi |k|}{2}\geq \pi.
	\end{align*}
	A direct calculation shows that for $\tau \in [-\beta,\beta]$, the \enquote{horizontal} curve $\lambda_\tau:[-\pi/2,3\pi/2]\to M_\beta$, $\lambda_\tau(t)=p(t,\tau)$ is of length $L_{g_0}(\lambda_\tau)=2\pi \phi_0(\tau)$, which is less than $\pi$ for $\tau>\beta_1$. Hence
	\begin{equation}\label{Eq: Systole of Mbeta}
		\sys (M_\beta,g_0)=		\begin{cases}
			\pi, \quad 				&\textnormal{if } \beta \leq \beta_1\\
			2 \pi \phi_0(\beta), 	&\textnormal{if } \beta > \beta_1.
		\end{cases}
	\end{equation}
	The geometric interpretation of this phenomenon is that for $\beta>\beta_1$, i.e. for round Möbius strips whose boundary is \enquote{close enough} to the poles in $S^2$, it is shorter to complete a full rotation near the boundary in the covering region contained in $S^2$ than to travel to the antipodal point in $S^2$. For $\beta>\beta_1$ we call $M_\beta$ a \textit{thick} Möbius strip. 
	
	The curves $\lambda_\tau$ also show that any $H_\beta$-invariant metric $g=\phi^2 g_{flat}$ on $M_\beta$ with systole at least $\pi$ necessarily satisfies $\phi\geq \frac {1}{2}$. This motivates the construction of the metric $g_\beta:=\phi_\beta^2 g_{flat}^{C_\beta}\in C_\beta$ (see also \Cref{Fig: phibeta Mobius with isometry}), where 
	\begin{equation}\label{Eq: Def of conf factor mobius}
		\phi_\beta:M_\beta\to (0,+\infty), \quad \phi_\beta(p(x,y))=\max\left\{\phi_0(y),\frac{1}{2}\right\}=
		\begin{cases} 
			\phi_0(y), \quad 	&\textnormal{if }|y|\leq \beta_1, \\ 
			\frac{1}{2}, 		&\textnormal{if }|y|> \beta_1. 
		\end{cases}
	\end{equation}
	\begin{figure}		
		\mbox{}\hfill  
		\newsavebox{\imageboxone}
		\savebox{\imageboxone}{
			\begin{tikzpicture}[domain=-1:2.634,scale=1]

				\draw[->] (-1,0) -- (5,0) node[right] {$y$};
				\draw[->] (0,-.3) -- (0,2.6);
				
				\draw[thin](2,-.08) -- (2,.08);	
				\draw[thin](4,-.08) -- (4,.08);

				\node[anchor=north] at  (2,-.1){$1$};	
				\node[anchor=north] at  (4,-.1){$2$};

				\draw[dashed, thin] (-1,1)--(2.634,1);
				\node[anchor=east] at  (-1,1){$\frac{1}{2}$};	
				
				\draw[dashed, thin] (-1,2)--(4.9,2);
				\node[anchor= east] at  (-1,2){$1$};

				\draw[dashed, thin] (2.634,-.1) -- (2.634,1);	
				\node[anchor=north] at  (2.634,-.1){$\beta_1$};
				
				\node[anchor=south west] at (2.5,1) {$\phi_\beta$};
				\draw[thick]    plot (\x,{4*exp(\x/2)/(1+exp(\x))}  ) ;
				\draw[thick] (2.634,1)--(4.9,1);
				
			\end{tikzpicture}
		}

		\begin{subfigure}[t]{0.45\linewidth}
			
			\centering\usebox{\imageboxone}
			\caption*{The graph of $\phi_\beta$ for $\beta>\beta_1$.}
		\end{subfigure}
		\hfill
		\begin{subfigure}[t]{0.45\linewidth}
			\centering\raisebox{\dimexpr.5\ht\imageboxone-.5\height}{
				
				\begin{tikzpicture}[scale=.4]
					
					\draw[dotted] (11+3*0.7549,1.5+3*0.6557 -0.03 ) arc(0:180:3*0.7549cm and .5*0.7549cm);
					\draw[] (11+3*0.7549,1.5+3*0.6557 -0.03 ) arc(0:-180:3*0.7549cm and .5*0.7549cm);
					
					\draw[] (11+3*0.7549,1.5-3*0.6557+0.03) arc(0:-180:3*0.7549cm and .5*0.7549cm);
					\draw[dotted] (11+3*0.7549,1.5-3*0.6557+0.03) arc(0:180:3*0.7549cm and .5*0.7549cm);
					
					\begin{scope}[decoration={
							markings,
							mark=at position 0.1 with {\arrow{>}}}
						] 
						\draw[very thick,postaction={decorate}] (14,1.5) arc(0:-41:3cm);
						\draw[very thick,postaction={decorate}] (8,1.5) arc(180:180-41:3cm);
						
					\end{scope}
					\draw[very thick] (14,1.5) arc(0:41:3cm);
					\draw[very thick] (8,1.5) arc(180:180+41:3cm);
					
					\draw (14,1.5) arc(0:-180:3cm and .9cm);
					\draw [dotted](14,1.5) arc(0:180:3cm and .9cm);

					\draw[very thick](11+3*0.7549,1.5+3*0.6557 -0.03 ) --(11+3*0.7549,1.5+3*0.6557 -0.03 + 2) ;
					\draw[very thick](11-3*0.7549,1.5+3*0.6557 -0.03 ) --(11-3*0.7549,1.5+3*0.6557 -0.03 + 2) ;
					\draw[very thick] (11+3*0.7549,1.5+3*0.6557 -0.03 +2) arc(0:180:3*0.7549cm and .5*0.7549cm);
					\draw[very thick] (11+3*0.7549,1.5+3*0.6557 -0.03+2 ) arc(0:-180:3*0.7549cm and .5*0.7549cm);
					
					\draw[very thick](11+3*0.7549,1.5-3*0.6557 -0.03 ) --(11+3*0.7549,1.5-3*0.6557 -0.03 - 2) ;
					\draw[very thick](11-3*0.7549,1.5-3*0.6557 -0.03 ) --(11-3*0.7549,1.5-3*0.6557 -0.03 - 2) ;
					\draw[very thick] (11+3*0.7549,1.5-3*0.6557+0.03-2) arc(0:-180:3*0.7549cm and .5*0.7549cm);
					\draw[dotted] (11+3*0.7549,1.5-3*0.6557+0.03-2) arc(0:180:3*0.7549cm and .5*0.7549cm);
					
				\end{tikzpicture}
			}
			\caption*{For $\beta>\beta_1$, $(M_\beta,g_\beta)$ is isometric to the Möbius strip on a sphere with two cylinders of radius $\frac{1}{2}$ attached.}
		\end{subfigure}
		\hfill

		\caption{By identifying two antipodal meridians on the cylinder-sphere with opposing orientations, one obtains a Möbius strip isometric to $(M_\beta, g_\beta)$. The cylinders attached to the sphere are each of height $\frac{\beta-\beta_1}{2}$. }
		\label{Fig: phibeta Mobius with isometry}
	\end{figure}

	Note that because $\phi_\beta$ is in general not smooth but only continuous, the metric $g_\beta$ is not a smooth metric either. 
	It satisfies 
	\begin{equation}
		\sys(M_\beta,g_\beta)=\pi, \quad \area(M_\beta,g_\beta)=	\begin{cases}
			2\pi\tanh\beta, \quad \textnormal{for } \beta\leq \beta_1\\
			\pi\,\sqrt[]{3}+\frac{\pi}{2}(\beta-\beta_1),\quad \textnormal{for } \beta> \beta_1.
		\end{cases}
	\end{equation}
	It is the aim of the following to show that $g_\beta$ in fact minimizes the systolic area of $M_\beta$ in the conformal class of $g_{flat}$. The key part in the proof is showing that projecting $H_\beta$-invariant metrics onto $g_\beta$ does not decrease the systole: consider the map 
	\begin{equation}
		P_{\phi_\beta}: L^2(M_\beta, g_{flat}) \to L^2(M_\beta, g_{flat}),\quad P_{\phi_\beta}(\phi)=\frac{(\phi,\phi_\beta)_{L^2}}{(\phi_\beta,\phi_\beta)_{L^2}}\phi_\beta.
	\end{equation} 
	It is the $L^2$-orthogonal projection onto the subspace $\R \phi_\beta\subset L^2(M_\beta, g_{flat})$ and projects a positive function $\phi$ to a positive multiple of $\phi_\beta$. Thus, in this case, $P_{\phi_\beta}(\phi)>0$, hence $P_{\phi_\beta}(\phi)^2g_{flat}$ is again a (continuous) Riemannian metric on $M_\beta$. 
	\begin{Lemma}\label{Lemma: Projection Lemma for Mobius}
		Let $g=\phi^2 g_{flat}$ be a $H_\beta$-invariant Riemannian metric on $M_\beta$. Then the metric $P_{\phi_\beta}(\phi)^2 g_{flat}$ satisfies
		\begin{align}
			\area(M_\beta,\phi^2 g_{flat}) &= \area (M_\beta, P_{\phi_\beta}(\phi)^2 g_{flat}) + \|\phi-P_{\phi_\beta}(\phi)\|_{L^2}^2 \label{Eq: Equality in möbius-projection}\\
			\sys(M_\beta,\phi^2 g_{flat})&\leq \sys (M_\beta, P_{\phi_\beta}(\phi)^2 g_{flat})			\label{Eq: projection-ineq for möbius proj}
		\end{align}
	\end{Lemma}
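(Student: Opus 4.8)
The plan is to treat the two statements separately. The area identity \eqref{Eq: Equality in möbius-projection} is immediate and identical to the corresponding step in \Cref{Lemma: isometry projection}: since $\area(M_\beta,\psi^2 g_{flat})=\|\psi\|_{L^2}^2$ for any conformal factor $\psi$ and $P_{\phi_\beta}$ is an $L^2$-orthogonal projection, the Pythagorean splitting $\|\phi\|_{L^2}^2=\|P_{\phi_\beta}(\phi)\|_{L^2}^2+\|\phi-P_{\phi_\beta}(\phi)\|_{L^2}^2$ gives the claim. For the systole inequality \eqref{Eq: projection-ineq for möbius proj} I would first reduce it to a single length estimate. Write $P_{\phi_\beta}(\phi)=c\,\phi_\beta$ with $c=(\phi,\phi_\beta)_{L^2}/(\phi_\beta,\phi_\beta)_{L^2}>0$; then the projected metric is $c^2 g_\beta$, so that $\sys(M_\beta,P_{\phi_\beta}(\phi)^2 g_{flat})=c\,\sys(M_\beta,g_\beta)=c\pi$. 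Hence it suffices to exhibit, for the given $H_\beta$-invariant metric $g=\phi^2 g_{flat}$, a noncontractible closed curve of $g$-length at most $c\pi=\pi(\phi,\phi_\beta)_{L^2}/(\phi_\beta,\phi_\beta)_{L^2}$.

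Such a curve I would produce by averaging $g$-lengths over the systolic geodesics of $g_\beta$. Consider first the thin case $\beta\le\beta_1$, where these are the curves $\gamma_\tau$, $\tau\in[0,\beta)$. Since $g$ is $H_\beta$-invariant, $\phi$ depends only on $y$ and is even, so inserting the defining formula for $\gamma_\tau$ and using evenness gives
\begin{equation*}
	L_g(\gamma_\tau)=2\int_0^\tau \phi(t)\,\frac{\phi_0(t)}{\sqrt{\phi_0^2(t)-\phi_0^2(\tau)}}\,dt .
\end{equation*}
I then integrate this against the measure $d\nu(\tau)=\dfrac{-(\phi_0^2)'(\tau)}{\sqrt{\phi_0^2(\tau)-\phi_0^2(\beta)}}\,d\tau$ on $[0,\beta)$, which is positive because $\phi_0$ is strictly decreasing (so $(\phi_0^2)'<0$) and $\phi_0(\tau)>\phi_0(\beta)$. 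Interchanging the order of integration (Tonelli, all integrands being positive) and substituting $w=\phi_0^2(\tau)$, the inner integral becomes $\int_c^a\frac{dw}{\sqrt{(a-w)(w-c)}}$ with $a=\phi_0^2(t)$, $c=\phi_0^2(\beta)$, which equals the constant $\pi$ regardless of $a>c$. This collapse yields
\begin{equation*}
	\int_0^\beta L_g(\gamma_\tau)\,d\nu(\tau)=2\pi\int_0^\beta \phi(t)\phi_0(t)\,dt=(\phi,\phi_\beta)_{L^2}.
\end{equation*}
Taking $\phi=\phi_\beta$ (so that $L_{g_\beta}(\gamma_\tau)\equiv\pi$) identifies the total mass as $\nu([0,\beta))=(\phi_\beta,\phi_\beta)_{L^2}/\pi$. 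As each $\gamma_\tau$ is noncontractible, $\sys(M_\beta,g)$ is a lower bound for all the $L_g(\gamma_\tau)$ and hence for their $\nu$-average, giving $\sys(M_\beta,g)\le\pi(\phi,\phi_\beta)_{L^2}/(\phi_\beta,\phi_\beta)_{L^2}=c\pi$, which is exactly \eqref{Eq: projection-ineq for möbius proj}.

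For thick strips $\beta>\beta_1$ the systolic geodesics of $g_\beta$ split into two families, both of $g_\beta$-length $\pi$: the $\gamma_\tau$ with $\tau\in[0,\beta_1)$ in the spherical cap, and the horizontal circles $\lambda_\tau$ with $\tau\in(\beta_1,\beta]$ in the cylinders. I would run the same computation on the cap with $\beta_1$ in place of $\beta$, and on the cylinders use $L_g(\lambda_\tau)=2\pi\phi(\tau)$ together with the measure $\tfrac12\,d\tau$; summing the two contributions again reproduces $(\phi,\phi_\beta)_{L^2}$ and total mass $(\phi_\beta,\phi_\beta)_{L^2}/\pi$, so the conclusion follows verbatim.

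The main obstacle is the choice of the averaging measure $\nu$: one must identify the correct density and then verify that integrating the explicit length $L_g(\gamma_\tau)$ against it reproduces \emph{exactly} the inner product $(\phi,\phi_\beta)_{L^2}$. This is precisely where the special structure of $\phi_0$ and the constant-$\pi$ Abel kernel $\int_c^a\frac{dw}{\sqrt{(a-w)(w-c)}}=\pi$ are used; the geometric reason for the choice is that $\{\gamma_\tau\}$ is the pencil of $g_\beta$-geodesics through a common point and $\nu$ is its angular measure, so no honest foliation is needed and the pencil's common point (a null set) causes no trouble. Beyond this, the only points requiring care are the positivity of $\nu$ and the fact that every curve carrying $\nu$-mass is noncontractible, which together legitimize the averaging inequality $\sys\le\text{average}$.
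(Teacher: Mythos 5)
Your proof is correct, and its core is the same integral--geometric mechanism as the paper's: averaging the $g$-lengths of the systolic geodesics of $g_\beta$ against a measure built from $\phi_0$. The difference is where the key identity comes from. The paper normalizes $\sys(M_\beta,g)=\pi$, reduces \eqref{Eq: projection-ineq for möbius proj} to $(\phi,\phi_\beta)_{L^2}\geq(\phi_\beta,\phi_\beta)_{L^2}$, and then \emph{cites} Pu's equality \eqref{Eq: Pus Möbius equation} as a black box; your measure $d\nu(\tau)=\frac{-(\phi_0^2)'(\tau)}{\sqrt{\phi_0^2(\tau)-\phi_0^2(\beta)}}d\tau$ has exactly twice Pu's weight $-\restr{\frac{d}{ds}}{s=\tau}\sqrt{\phi_0^2(s)-\phi_0^2(\beta)}$, so your Tonelli-plus-Abel-kernel computation (with $\int_c^a\frac{dw}{\sqrt{(a-w)(w-c)}}=\pi$) is a self-contained re-derivation of that cited identity, and identifying the total mass by plugging in $\phi=\phi_\beta$ is a clean way to avoid computing $\nu([0,\beta))=2\tanh\beta$ by hand. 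In the thick case $\beta>\beta_1$ the routes genuinely diverge: the paper restricts $g$ to $M_{\beta_1}$, applies Pu's inequality there, and uses the pointwise bound $\phi\geq\frac12$ forced by the curves $\lambda_\tau$, whereas you fold both constraints into a single averaging measure (the cap pencil plus $\frac12\,d\tau$ on the cylindrical circles, all of $g_\beta$-length $\pi$), which moreover proves the scale-invariant inequality $\sys(M_\beta,g)\leq\pi(\phi,\phi_\beta)_{L^2}/(\phi_\beta,\phi_\beta)_{L^2}$ directly, with no normalization step. What the paper's version buys is brevity and a reusable intermediate statement (Pu's inequality \eqref{Eq: Pus inequality}, which is invoked again in \Cref{Sec: Projection Lemma} for the Klein bottle); what yours buys is independence from the reference and a uniform treatment of both families of systolic geodesics. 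Two cosmetic points: the density of $\nu$ vanishes at $\tau=0$ (since $(\phi_0^2)'(0)=0$), so $\nu$ is only nonnegative rather than positive there, which is harmless; and the singularities of the integrand at $t=\tau$ and $\tau=\beta$ are integrable, so the Tonelli step is legitimate as you assert.
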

	\begin{proof}
		The proof of \eqref{Eq: Equality in möbius-projection} is identical to the proof of \eqref{Eq: Equality in isometry-projection} from \Cref{Lemma: isometry projection}. It follows from the fact that $P_{\phi_\beta}$ is an $L^2$-orthogonal projection. 
		Due to 
		\begin{equation}
			\sys (M_\beta, P_{\phi_\beta}(\phi)^2 g_{flat})	= \frac{(\phi,\phi_\beta)_{L^2}}{(\phi_\beta,\phi_\beta)_{L^2}} \sys(M_\beta, g_\beta) = \pi \frac{(\phi,\phi_\beta)_{L^2}}{(\phi_\beta,\phi_\beta)_{L^2}},
		\end{equation}
		the inequality \eqref{Eq: projection-ineq for möbius proj} is equivalent to  
		\begin{equation}\label{Eq: proj-ineq reformulated Möbius}
			\frac{(\phi,\phi_\beta)_{L^2}}{(\phi_\beta,\phi_\beta)_{L^2}}\geq \frac{\sys(M_\beta,g)}{\pi}.
		\end{equation}
		As this inequality is invariant under rescalings of $\phi$ by positive constants, it is sufficient to only consider metrics satisfying $\sys(M_\beta,g)=\pi$, for which \eqref{Eq: proj-ineq reformulated Möbius} simplifies to 
		\begin{equation}\label{Eq: proj-ineq simplest}
			(\phi,\phi_\beta)_{L^2}\geq (\phi_\beta,\phi_\beta)_{L^2}.
		\end{equation}
		This inequality essentially follows from an equality of Pu  that is stated and proven on pages 69 and 70 of \cite{Pu1952}. He showed that 
		\begin{equation}\label{Eq: Pus Möbius equation}
			\int_0^\beta \phi(y) \phi_0(y) dy = \frac{1}{\pi} \int_0^\beta L_{g}(\gamma_\tau) \left( -\restr{\frac{d}{ds}}{s=\tau}\, \sqrt[]{\phi_0^2(s)-\phi_0^2(\beta)}\right) d\tau 
		\end{equation} 
		holds for all $\beta>0$ and all $H_\beta$-invariant metrics $g=\phi^2 g_{flat}$ on $M_\beta$, where $\gamma_\tau$ are the curves defined above. $\sys(M_\beta,g)=\pi$ implies $L_g(\gamma_\tau)\geq \pi$ and thus 
		\begin{equation}
			\int_0^\beta \phi(y) \phi_0(y) dy \geq \int_0^\beta \left( -\restr{\frac{d}{ds}}{s=\tau}\, \sqrt[]{\phi_0^2(s)-\phi_0^2(\beta)}\right) d\tau = \tanh \beta.
		\end{equation} 
		Combining this with $\int_0^\beta \phi_0^2(y)dy=\tanh \beta$, we have what we will call Pu's inequality:
		\begin{equation}\label{Eq: Pus inequality}
			\int_0^\beta \phi(y) \phi_0(y) dy  \geq \int_0^\beta \phi_0^2(y) dy. 
		\end{equation} 
		It holds for all $H_\beta$-invariant functions $\phi$ satisfying $\sys(M_\beta,\phi^2g_{flat})\geq \pi$. 
		
		Because $H_\beta$-invariant functions $f,h\in L^2(M,g_{flat})$ are constant in the $x$-coordinate and even in $y$, their scalar product is given by 
		\begin{equation}\label{Eq: L2 product on Möbius}
			(f,h)_{L^2}=\int_{M_\beta} f(x,y)h(x,y) dxdy = 2\pi\int_0^\beta f(y) h(y)dy.
		\end{equation}  
		Hence, it is sufficient to show $\int_0^\beta \phi(y)\phi_\beta(y)\geq \int_0^\beta \phi_\beta^2(y)dy$ in order to prove \eqref{Eq: proj-ineq simplest}. 
		For $\beta\leq \beta_1$, this immediately follows from \eqref{Eq: Pus inequality}, because $\phi_\beta(y)=\phi_0(y)$ for $y\leq \beta_1$. 
		
		For $\beta>\beta_1$, we observe that restricting $g$ to $M_{\beta_1}\subset M_\beta$ cannot decrease the systole (because any noncontractible closed curve in $M_{\beta_1}$ is also a noncontractible closed curve in $M_\beta$), so $\sys (M_{\beta_1},\restr{g}{M_{\beta_1}})\geq \pi$ holds, which implies 
		\begin{equation}\label{Eq: Pus ineq restricted}
			\int_0^{\beta_1}\phi(y)\phi_0(y)dy \geq \int_0^{\beta_1} \phi_0^2(y)dy.  
		\end{equation}
		Furthermore, $\sys(M_\beta,g)\geq \pi$ forces the curves $\lambda_\tau$ to be of length at least $\pi$, hence $\phi\geq \frac{1}{2}$. In particular, we obtain $\phi(y)\geq \phi_\beta(y)=\frac{1}{2}$ for $y\geq \beta_1$. In combination with \eqref{Eq: Pus ineq restricted} this yields
		\begin{align*}
			\int_0^\beta \phi(y)\phi_\beta(y)dy &= \int_0^{\beta_1} \phi(y)\phi_\beta(y)dy + \int_{\beta_1}^\beta \phi(y)\phi_\beta(y)dy \\
			&=\int_0^{\beta_1}\phi(y)\phi_0(y)dy + \int_{\beta_1}^\beta \phi(y)\phi_\beta(y)dy \\
			&\geq \int_0^{\beta_1} \phi_0^2(y)dy + \int_{\beta_1}^\beta \phi_\beta(y)\phi_\beta(y)dy \\
			&= \int_0^{\beta_1} \phi_\beta^2(y)dy + \int_{\beta_1}^\beta \phi_\beta^2 (y)dy \\
			&= \int_0^\beta \phi_\beta^2(y) dy, 
		\end{align*}
		completing the proof. 
	\end{proof}
	The results above can be summarized as follows: 
	\begin{Thm}\label{Thm: Systolic inequality with remainder for the Mobius stip}
		Let $\beta>0$,  $C_{\beta}\in \mathpcal C(M)$ be a conformal class and $g=\phi^2 g_{flat}^{C_\beta}\in C_\beta$ be a Riemannian metric on the Möbius strip $M_\beta$. Then
		\begin{equation}\label{Eq: optimal systolic constant mobius}
			\alpha_{sys}(M,C_\beta)=\alpha_{sys}(M_\beta,g_\beta)=\begin{cases}
				\frac{2}{\pi}\tanh \beta , \quad &\textnormal{if } \beta\leq \beta_1\\
				\dfrac{\sqrt[]{3}}{\pi}+\dfrac{\beta-\beta_1}{2\pi} &\textnormal{if } \beta>\beta_1,
			\end{cases}
		\end{equation}
		and
		\begin{equation}\label{Eq: Systolic inequality with remainder for the Möbius strip}
			\frac{\|\phi- P_{\phi_\beta}(\phi)\|^2_{L^2}}{\sys^2(M_\beta,g)}\leq \alpha_{sys}(M_\beta,g)-\alpha_{sys}(M,C_\beta).			
		\end{equation}
		In particular, $\alpha_{sys}(M_\beta,g)=\alpha_{sys}(M,C_\beta)$ holds if and only if $\phi$ is a positive multiple of $\phi_\beta$. 
	\end{Thm}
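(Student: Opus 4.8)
The plan is to combine the two projection lemmas already at our disposal and then convert the resulting comparison of systoles into the additive systolic-area estimate by a scaling argument. The first point to settle is that the estimate is claimed for an \emph{arbitrary} conformal factor $\phi$, whereas \Cref{Lemma: Projection Lemma for Mobius} only applies to $H_\beta$-invariant metrics. I would bridge this gap by a two-step projection: put $\bar\phi:=P_{H_\beta}(\phi)$, which is $H_\beta$-invariant by construction. Since $\phi_\beta$ is itself $H_\beta$-invariant, the line $\R\phi_\beta$ lies inside the space $L^2_{H_\beta}(M_\beta,g_{flat})$ of $H_\beta$-invariant functions, so the two orthogonal projections are nested and satisfy $P_{\phi_\beta}\circ P_{H_\beta}=P_{\phi_\beta}$; in particular $P_{\phi_\beta}(\bar\phi)=P_{\phi_\beta}(\phi)$. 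Chaining \Cref{Lemma: isometry projection} (with $H=H_\beta$) and \Cref{Lemma: Projection Lemma for Mobius} then gives
\begin{equation*}
\sys(M_\beta,\phi^2 g_{flat})\leq \sys(M_\beta,\bar\phi^2 g_{flat})\leq \sys(M_\beta,P_{\phi_\beta}(\bar\phi)^2 g_{flat})=\sys(M_\beta,P_{\phi_\beta}(\phi)^2 g_{flat}).
\end{equation*}

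Next I would turn this into a bound on the norm of the projection. Write $P_{\phi_\beta}(\phi)=c\,\phi_\beta$ with $c=(\phi,\phi_\beta)_{L^2}/(\phi_\beta,\phi_\beta)_{L^2}>0$, the positivity coming from $\phi,\phi_\beta>0$. Because the systole scales linearly when the conformal factor is multiplied by a positive constant, $\sys(M_\beta,P_{\phi_\beta}(\phi)^2 g_{flat})=c\,\sys(M_\beta,g_\beta)=c\pi$. Setting $S:=\sys(M_\beta,g)$, the chain above reads $S\leq c\pi$, so $c^2\geq S^2/\pi^2$ and hence $\|P_{\phi_\beta}(\phi)\|_{L^2}^2=c^2\|\phi_\beta\|_{L^2}^2\geq (S^2/\pi^2)\|\phi_\beta\|_{L^2}^2$. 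Using that $P_{\phi_\beta}$ is an orthogonal projection and that the area of a conformal metric is the squared $L^2$-norm of its factor, I obtain $\|\phi-P_{\phi_\beta}(\phi)\|_{L^2}^2=\area(M_\beta,g)-\|P_{\phi_\beta}(\phi)\|_{L^2}^2$. Dividing by $S^2$ and recalling $\area(M_\beta,g_\beta)=\|\phi_\beta\|_{L^2}^2$ and $\sys(M_\beta,g_\beta)=\pi$ yields
\begin{equation*}
\frac{\|\phi-P_{\phi_\beta}(\phi)\|_{L^2}^2}{S^2}\leq \frac{\area(M_\beta,g)}{S^2}-\frac{\|\phi_\beta\|_{L^2}^2}{\pi^2}=\alpha_{sys}(M_\beta,g)-\alpha_{sys}(M_\beta,g_\beta).
\end{equation*}

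It then remains to identify $\alpha_{sys}(M,C_\beta)$ with $\alpha_{sys}(M_\beta,g_\beta)$. The inequality just obtained has nonnegative left-hand side, so $\alpha_{sys}(M_\beta,g)\geq \alpha_{sys}(M_\beta,g_\beta)$ for every smooth $g\in C_\beta$, and taking the infimum over $C_\beta$ gives $\alpha_{sys}(M,C_\beta)\geq \alpha_{sys}(M_\beta,g_\beta)$. For the reverse inequality I would approximate the merely continuous factor $\phi_\beta$ from within $C_\beta$ by smooth positive functions converging to it uniformly (possible since $\phi_\beta\geq \tfrac12$ is bounded below), noting that both $\area$ and $\sys$, and hence $\alpha_{sys}$, are continuous under such approximation; this yields $\alpha_{sys}(M,C_\beta)\leq \alpha_{sys}(M_\beta,g_\beta)$. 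The explicit formula \eqref{Eq: optimal systolic constant mobius} then follows by dividing the stated value of $\area(M_\beta,g_\beta)$ by $\sys^2(M_\beta,g_\beta)=\pi^2$, and substituting $\alpha_{sys}(M,C_\beta)$ for $\alpha_{sys}(M_\beta,g_\beta)$ in the displayed estimate gives \eqref{Eq: Systolic inequality with remainder for the Möbius strip}. For the equality case, if $\alpha_{sys}(M_\beta,g)=\alpha_{sys}(M,C_\beta)$ then the left-hand side vanishes, forcing $\phi=P_{\phi_\beta}(\phi)\in\R\phi_\beta$, i.e. $\phi=c\phi_\beta$ with $c>0$; conversely such a $\phi$ gives $g=c^2 g_\beta$, whose systolic area equals that of $g_\beta$ by scale invariance.

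Because the two projection lemmas already carry the analytic weight, the genuine work is organizational: verifying that the composition of the isometry projection with the Pu-type projection applies to a non-invariant factor (the identity $P_{\phi_\beta}\circ P_{H_\beta}=P_{\phi_\beta}$) and translating the multiplicative systole bound $S\leq c\pi$ into the additive systolic-area defect. I expect the one point needing real care to be the approximation argument placing the non-smooth optimizer $g_\beta$ on the boundary of the infimum defining $\alpha_{sys}(M,C_\beta)$.
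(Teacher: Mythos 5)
Your proposal is correct and follows essentially the same route as the paper: composing the isometry projection $P_{H_\beta}$ with the rank-one projection $P_{\phi_\beta}$ via the identity $P_{\phi_\beta}\circ P_{H_\beta}=P_{\phi_\beta}$, then converting the systole comparison into the area-defect estimate, with your passage through the constant $c$ and the direct Pythagorean identity $\|\phi-P_{\phi_\beta}(\phi)\|_{L^2}^2=\area(M_\beta,g)-\|P_{\phi_\beta}(\phi)\|_{L^2}^2$ being an equivalent repackaging of the paper's chained area decompositions from \Cref{Lemma: isometry projection,Lemma: Projection Lemma for Mobius}. The only cosmetic difference is your smoothing argument for the reverse inequality $\alpha_{sys}(M,C_\beta)\leq\alpha_{sys}(M_\beta,g_\beta)$, which the paper renders unnecessary by regarding the continuous metric $g_\beta$ as an element of $C_\beta$ from the outset.
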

	\begin{proof}
		Applying \Cref{Lemma: isometry projection} with $H=H_\beta$ yields 
		\begin{align*}
			\area(M,g ) &= \area (M,P_{H_\beta}(\phi)^2 g)+ \|\phi-P_{H_\beta}(\phi)\|^2_{L^2}	\\
			\sys(M,g) &\leq \sys (M,P_{H_\beta}(\phi)^2 g).
		\end{align*}
		By construction, the metric $P_{H_\beta}(\phi)^2 g_{flat}$ is $H_\beta$-invariant, so we can use \Cref{Lemma: Projection Lemma for Mobius} to find 
		
		\begin{equation*}
			\area(M_{\beta}, g ) = \area (M_{\beta},P_{\phi_\beta}(P_{H_\beta}(\phi))^2 g_{flat})+ \|\phi-P_{H_\beta}(\phi)\|^2_{L^2}+\|P_{H_\beta}(\phi)- P_{\phi_\beta}(P_{H_\beta}(\phi))\|^2_{L^2}
		\end{equation*}
		and 
		\begin{equation*}
			\sys(M_\beta,g)\leq \sys(M_\beta, P_{H_\beta}(\phi)^2 g_{flat})\leq \sys(M_\beta, P_{\phi_\beta}(P_{H_\beta}(\phi))^2 g_{flat}).
		\end{equation*}
		Because $\R\phi_\beta\subset L^2_H(M_\beta)$, we find $P_{\phi_\beta}\circ P_{H_\beta}=P_{\phi_\beta}$, so using the orthogonality of the projections and the Pythagorean Theorem yields
		\begin{equation*}
			\|\phi-P_{H_\beta}(\phi)\|^2_{L^2}+\|P_{H_\beta}(\phi)- P_{\phi_\beta}(P_{H_\beta}(\phi))\|^2_{L^2}=\|\phi-P_{\phi_\beta}(\phi)\|^2_{L^2}.
		\end{equation*}
		Combined, these inequalities give
		\begin{align*}
			\alpha_{sys}(M_\beta,g)= \dfrac{\area(M_\beta,P_{\phi_\beta}(\phi)^2 g_{flat})
				+ \|\phi-P_{\phi_\beta}(\phi)\|^2_{L^2}}{\sys^2(M_\beta,g)}\nonumber \\
			\geq \dfrac{\area (M_\beta,P_{\phi_\beta}(\phi)^2 g_{flat})}{\sys^2(M_\beta,P_{\phi_\beta}(\phi)^2 g_{flat})}+ \dfrac{\|\phi- P_{\phi_\beta}(\phi)\|^2_{L^2}}{\sys^2(M_\beta,g)}, 
		\end{align*}
		or equivalently 
		\begin{equation}\label{Eq: systolic defect mobius strip}
			\dfrac{\|\phi- P_{\phi_\beta}(\phi)\|^2_{L^2}}{\sys^2(M_\beta,g)} \leq \alpha_{sys}(M_\beta,g)-\alpha_{sys}(M_\beta, P_{\phi_\beta}(\phi)^2 g_{flat}).
		\end{equation}
		Because $P_\beta(\phi)$ is a constant multiple of $\phi_\beta$, the scaling-invariance of the systolic area gives us $\alpha_{sys}(M_\beta,P_\beta(\phi)^2g_{flat})=\alpha_{sys}(M_\beta,g_\beta)$, so \eqref{Eq: systolic defect mobius strip} implies $\alpha_{sys}(M_\beta,g)\geq \alpha_{sys}(M_\beta,g_\beta)$ for all $g\in C_\beta$, which proves \eqref{Eq: optimal systolic constant mobius}	and consequently \eqref{Eq: Systolic inequality with remainder for the Möbius strip}. 
		If $\alpha_{sys}(M_\beta,g)-\alpha_{sys}(M,C_\beta)=0$, then $\|\phi- P_{\phi_\beta}(\phi)\|^2_{L^2}=0$ and hence 
		\begin{equation}
			\phi=P_{\phi_\beta}(\phi),
		\end{equation}
		so $\phi$ is a multiple of $\phi_\beta$. Conversely, $\alpha_{sys}(M_\beta,\lambda^2 g_\beta)-\alpha_{sys}(M_\beta,C_\beta)=0$ holds by the scaling-invariance of the systolic area and by \eqref{Eq: optimal systolic constant mobius} for all constants $\lambda>0$.
	\end{proof}
	
	Instead of using $g_{flat}^{C_{\beta}}$ as a reference metric in the conformal class $C_\beta$, we can use the optimal metric $g_{\beta}$ and get a similar result. It induces a measure on $M_{\beta}$, so  we can consider the space $L^2(M_\beta, g_{\beta})$, where the scalar product is taken with respect to this measure. Here, the orthogonal projection $h\mapsto \frac{(h,1)_{L^2}}{(1,1)_{L^2}}$ onto the space of constant functions is precisely the expected value of $h$ with respect to the probability measure $\mu_{g_\beta}$ obtained by normalizing the measure induced by $g_{\beta}$. For a metric $g\in C_\beta$ we can write
	\begin{align*}
		g&=h^2 g_{\beta}\\
		g&=\phi^2 g_{flat}^{C_\beta}
	\end{align*}
	for suitable positive functions $h,\phi$ on $M_\beta$. Because of $g_{\beta}=\phi_{\beta}^2g_{flat}^{C_\beta}$, we find
	\begin{equation}\label{Eq: phi in terms of h}
		\phi=h\phi_{\beta},
	\end{equation} 
	and a direct calculation shows
	\begin{equation}
		(f_1,f_2)_{L^2(M_{\beta},g_{\beta})}=(f_1\phi_\beta, f_2 \phi_\beta)_{L^2(M_{\beta},g_{flat})}
	\end{equation}
	for all functions $f_1,f_2\in L^2(M_\beta, g_{\beta})$.
	Using this and \eqref{Eq: phi in terms of h} we obtain 
	\begin{align*}
		P_{\phi_\beta}(\phi)&=\frac{(\phi,\phi_\beta)_{L^2(M_\beta,g_{flat})}}{(\phi_\beta,\phi_\beta)_{L^2(M_\beta,g_{flat})}}\phi_\beta \\
		&=\frac{(h\phi_\beta,\phi_\beta)_{L^2(M_\beta,g_{flat})}}{(\phi_\beta,\phi_\beta)_{L^2(M_\beta,g_{flat})}}\phi_\beta\\
		&=\frac{(h,1)_{L^2(M_\beta,g_{\beta})}}{(1,1)_{L^2(M_\beta,g_{\beta})}}\phi_\beta \\
		&=E(h)\phi_\beta,
	\end{align*}
	where $E(h)$ denotes the expected value of $h$ with respect to the probability measure $\mu_{g_\beta}$. 
	This leads to 
	\begin{align}
		\| \phi - P_{\phi_\beta}(\phi)\|^2_{L^2(M_{\beta},g_{flat})}&=\| h\phi_\beta - E(h)\phi_\beta \|^2_{L^2(M_{\beta},g_{flat})}
		=\| h - E(h) \|^2_{L^2(M_{\beta},g_{\beta})}\nonumber\\
		&= \area(M_{\beta},g_{\beta}) \Var(h),
	\end{align}
	where $\Var(h)=E(h^2)-E(h)^2$ is the variance with respect to $\mu_{g_\beta}$.
	Thus, as a corollary, we obtain a reformulation of \eqref{Eq: Systolic inequality with remainder for the Möbius strip}, similar to the stability results involving the variance on $\RP^2$ and $\T^2$, as in \cite{Katz2020} and \cite{Horowitz2009}.
	\begin{Cor}\label{Cor: Variance formulation Mobius}
		Let $C_\beta\in \mathpcal C(M)$ be a conformal class and $g=h^2g_{\beta}\in C_{\beta}$ be a Riemannian metric on the Möbius strip $M_{\beta}$. Then 
		\begin{equation}
			\area(M_{\beta},g_{\beta})	\frac{ \Var(h)}{\sys^2(M_{\beta},g)}\leq \alpha_{sys}(M_\beta,g)-\alpha_{sys}(M,C_\beta).
		\end{equation}
		In particular, $\alpha_{sys}(M_{\beta},g)=\alpha_{sys}(M,C_{\beta})$ holds if and only if $h$ is constant. 
	\end{Cor}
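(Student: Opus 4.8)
The plan is to obtain this corollary as a direct reformulation of \Cref{Thm: Systolic inequality with remainder for the Mobius stip}, since all of the geometric content (Pu's inequality and the two projection lemmas) is already packaged into that theorem. The only remaining task is to rewrite the defect term $\|\phi-P_{\phi_\beta}(\phi)\|^2_{L^2}$ appearing in \eqref{Eq: Systolic inequality with remainder for the Möbius strip} in terms of the variance of $h$ with respect to $\mu_{g_\beta}$.

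First I would fix the two descriptions $g=h^2g_\beta=\phi^2 g_{flat}^{C_\beta}$ of the same metric, which by $g_\beta=\phi_\beta^2 g_{flat}^{C_\beta}$ force the pointwise relation $\phi=h\phi_\beta$ recorded in \eqref{Eq: phi in terms of h}. Combining this with the change-of-measure identity $(f_1,f_2)_{L^2(M_\beta,g_\beta)}=(f_1\phi_\beta,f_2\phi_\beta)_{L^2(M_\beta,g_{flat})}$ lets me identify the orthogonal projection onto $\R\phi_\beta$ with the expectation operator, namely $P_{\phi_\beta}(\phi)=E(h)\phi_\beta$, where $E(\cdot)$ is taken against the probability measure $\mu_{g_\beta}$. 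This is precisely the computation carried out in the discussion preceding the corollary.

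Given this identification, the defect term becomes $\|\phi-P_{\phi_\beta}(\phi)\|^2_{L^2(M_\beta,g_{flat})}=\|(h-E(h))\phi_\beta\|^2_{L^2(M_\beta,g_{flat})}=\|h-E(h)\|^2_{L^2(M_\beta,g_\beta)}=\area(M_\beta,g_\beta)\,\Var(h)$, where the last equality reflects that integrating $(h-E(h))^2$ against the unnormalized measure of $g_\beta$ yields the total area times the variance against the normalized measure $\mu_{g_\beta}$. Substituting this expression into \eqref{Eq: Systolic inequality with remainder for the Möbius strip} produces the claimed inequality.

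For the equality characterization, I would note that $\area(M_\beta,g_\beta)>0$, so the left-hand side vanishes exactly when $\Var(h)=0$, that is, when $h$ is constant; by the equality clause of \Cref{Thm: Systolic inequality with remainder for the Mobius stip} (equivalently, $\phi=h\phi_\beta$ being a positive multiple of $\phi_\beta$) this is precisely the case $\alpha_{sys}(M_\beta,g)=\alpha_{sys}(M,C_\beta)$. No genuine obstacle arises here; the only point deserving attention is keeping careful track of which $L^2$-inner product is in play at each step and verifying the change-of-measure formula, which in two dimensions simply encodes that the area density of $\phi_\beta^2 g_{flat}$ is $\phi_\beta^2$ times that of $g_{flat}$.
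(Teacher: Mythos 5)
Your proposal is correct and follows essentially the same route as the paper, whose proof of \Cref{Cor: Variance formulation Mobius} is exactly the discussion preceding it: the identity $\phi=h\phi_\beta$ from \eqref{Eq: phi in terms of h}, the change-of-measure formula for the two $L^2$-products, the identification $P_{\phi_\beta}(\phi)=E(h)\phi_\beta$, and the rewriting $\|\phi-P_{\phi_\beta}(\phi)\|^2_{L^2}=\area(M_\beta,g_\beta)\Var(h)$, substituted into \eqref{Eq: Systolic inequality with remainder for the Möbius strip}. Your treatment of the equality case via $\Var(h)=0$ and the equality clause of \Cref{Thm: Systolic inequality with remainder for the Mobius stip} also matches the paper's intent.
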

	
	\section{The Klein bottle}\label{Sec: Klein bottle inequality}
	The Klein bottle can be constructed as a quotient of the standard Euclidean plane $(\R^2,g_{flat})$ by a discrete group. For $\beta>0$, consider the subgroup $\Gamma_\beta\subset \Isom(\R^2,g_{flat})$ generated by the isometries $A:\R^2\to\R^2$, $A(x,y)=(x+\pi,-y)$ and $B: \R^2\to\R^2$, $B(x,y)=(x,y+4\beta)$. The group $\Gamma_\beta$ acts isometrically, freely and properly on $(\R^2,g_{flat})$, thus $K_\beta:=\R^2/\Gamma_\beta$ is a smooth manifold with a unique Riemannian metric, also denoted by $g_{flat}$, such that the quotient map $p: \R^2\to K_\beta$ is a local isometry. The pair $(K_\beta,g_{flat})$ is called \textit{flat Klein bottle of width $4\beta$}. It contains two Möbius strips of width $2\beta$, as is suggested in \Cref{Fig: Klein Bottle from Mobius}. 
	We denote by $C_\beta$ the conformal class of $(K_\beta,g_{flat})$, i.e. the set of metrics of the form $\phi^2 g_{flat}$, for some positive function $\phi$, and sometimes write $g_{flat}=g_{flat}^{C_\beta}$ when we want to emphasize that $g_{flat}^{C_\beta}$ is an element of $C_\beta$ or when $\beta$ is not clear from the context. 
	The following Lemma shows that up to isometry, the space $\mathpcal C(K)$ of conformal classes of metrics on the Klein bottle can be parametrized by the map $(0,+\infty)\to \mathpcal C(K) $, $\beta\mapsto C_\beta$. It is a direct consequence of the conformal representation theorem and the characterization of the space of flat metrics on the Klein bottle (see for example \cite{Wolf1967}, Proposition 2.5.9).
	\begin{Lemma}\label{Lemma: Characterization of CK}
		Let $g$ be a Riemannian metric on a Klein bottle $K$. Then there exists $\beta>0$ and a smooth function $\phi:K_\beta\to (0,+\infty)$ such that $(K,g)$ is isometric to $(K_\beta,\phi^2 g_{flat})$.
	\end{Lemma}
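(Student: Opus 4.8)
The plan is to reduce the statement to two classical ingredients: the conformal representation theorem, and the Bieberbach-type classification of flat metrics on the Klein bottle. First I would apply the conformal representation theorem to the closed surface $K$: every Riemannian metric $g$ is conformal to a metric $g_0$ of constant Gauss curvature. Since $K$ has vanishing Euler characteristic, Gauss--Bonnet forces this curvature to be $0$, so $g_0$ is flat and $g=\psi^2 g_0$ for a smooth positive function $\psi$ on $K$. It therefore suffices to understand flat metrics on $K$ up to isometry and scaling.

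The main step is to identify $(K,g_0)$ with one of the models $(K_\beta,\lambda^2 g_{flat})$. Passing to the universal cover, the flat metric lifts to a complete, simply connected flat metric on the plane, hence to the Euclidean $(\R^2,g_{flat})$, and the deck group becomes a discrete, fixed-point-free group $\Gamma\subset\Isom(\R^2,g_{flat})$ with $\R^2/\Gamma\cong K$. Since $\pi_1(K)=\langle A,B\mid ABA^{-1}=B^{-1}\rangle$, the orientation-preserving generator $B$ acts as a translation and the orientation-reversing, fixed-point-free generator $A$ acts as a glide reflection, and the two satisfy this relation inside $\Isom(\R^2)$. A short computation of linear parts then shows the relation forces the translation vector of $B$ to be orthogonal to the glide axis of $A$: placing the axis on the $x$-axis and writing $A(x,y)=(x+a,-y)$, $B(x,y)=(x+u,y+v)$, one finds that $ABA^{-1}$ is the translation by $(u,-v)$, so comparison with $B^{-1}$, the translation by $(-u,-v)$, yields $u=0$.

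Normalizing then leaves a single modulus. Conjugating $\Gamma$ by a Euclidean isometry to align the glide axis with the $x$-axis, and rescaling the metric so that the glide length equals $\pi$, I bring $A$ into the form $(x,y)\mapsto(x+\pi,-y)$; by the orthogonality just established, $B$ becomes $(x,y)\mapsto(x,y+4\beta)$ for a unique $\beta>0$ (positivity arranged by possibly replacing $B$ with $B^{-1}$). Hence $\Gamma$ is conjugate, up to scaling, to $\Gamma_\beta$, yielding an isometry $F\colon K_\beta\to K$ with $F^{*}g_0=\lambda^2 g_{flat}$ for some $\lambda>0$. Transporting the conformal factor gives $F^{*}g=(\psi\circ F)^2 F^{*}g_0=\bigl(\lambda\,(\psi\circ F)\bigr)^2 g_{flat}$, so setting $\phi:=\lambda\,(\psi\circ F)$ exhibits $(K,g)$ as isometric to $(K_\beta,\phi^2 g_{flat})$, as desired.

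I expect the main obstacle to be the classification step itself: one must genuinely invoke the structure of two-dimensional Bieberbach groups to know that $\Gamma$ has the presentation above with $A$ a glide reflection and $B$ a translation, and that exhausting the freedom of Euclidean conjugation together with one global rescaling leaves exactly the single parameter $\beta$. This is precisely the content supplied by \cite{Wolf1967}, Proposition~2.5.9; granting it, the uniformization and the transport of the conformal factor are routine.
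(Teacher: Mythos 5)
Your proposal is correct and takes essentially the same route as the paper, which proves the lemma by simply citing the conformal representation theorem together with the classification of flat metrics on the Klein bottle (Wolf, Proposition 2.5.9). Your glide-reflection computation (the relation $ABA^{-1}=B^{-1}$ forcing $u=0$, then normalizing the glide length to $\pi$) just fills in the details of that cited classification, and your transport of the conformal factor matches the paper's implicit argument.
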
 
	
	\begin{figure}		
		\mbox{}\hfill  
		\newsavebox{\imageboxfour}
		\savebox{\imageboxfour}{
			\begin{tikzpicture}

				\draw[thick,fill=gray!45](0,1)--(6,1)--(6,3)--(0,3)--(0,1);
				\node at (3,2) {$M_\beta$} ;

				\draw[fill=gray!15](0,4)--(6,4)--(6,3)--(0,3)--(0,4);
				\node at (3,3.5) {$M_\beta^-$} ;
				
				\draw[fill=gray!15](0,1)--(6,1)--(6,0)--(0,0)--(0,1);
				\node at (3,.5) {$M_\beta^+$} ;

				\begin{scope}[ thick,decoration={
						markings,
						mark=at position 0.55 with {\arrow{>>>>}}}
					] 
					\draw[postaction={decorate}] (0,0)--(6,0);
					\draw[postaction={decorate}] (0,4)--(6,4);
				\end{scope}

				\begin{scope}[ thick,decoration={
						markings,
						mark=at position 0.5 with {\arrow{>>}}}
					] 
					\draw[postaction={decorate}] (6,1)--(6,3);
					\draw[postaction={decorate}] (0,3)--(0,1);
				\end{scope}

				\begin{scope}[ thick,decoration={
						markings,
						mark=at position 0.6 with {\arrow{>}}}
					] 
					\draw[postaction={decorate}] (6,3)--(6,4);
					\draw[postaction={decorate}] (0,1)--(0,0);
				\end{scope}
				
				\begin{scope}[ thick,decoration={
						markings,
						mark=at position 0.6 with {\arrow{>>>}}}
					] 
					\draw[postaction={decorate}] (6,0)--(6,1);
					\draw[postaction={decorate}] (0,4)--(0,3);
				\end{scope}			
				
			\end{tikzpicture}
		}

		\begin{subfigure}[t]{0.45\linewidth}
			
			\centering\usebox{\imageboxfour}
			\caption*{$K_\beta$ can be decomposed into three regions $M_\beta^-,M_\beta^+$ and $M_\beta$}
		\end{subfigure}
		\hfill
		\begin{subfigure}[t]{0.45\linewidth}
			\centering\raisebox{\dimexpr.5\ht\imageboxfour-.5\height}{
				
				\begin{tikzpicture}
					
					\draw[thick,fill=gray!45](0,2.5)--(6,2.5)--(6,4.5)--(0,4.5)--(0,2.5);
					\node at (3,3.5) {$M_\beta$} ;
					
					\begin{scope}[ thick,decoration={
							markings,
							mark=at position 0.5 with {\arrow{>>}}}
						] 
						\draw[postaction={decorate}] (6,2.5)--(6,4.5);
						\draw[postaction={decorate}] (0,4.5)--(0,2.5);
					\end{scope}

					\draw[thick,fill=gray!15](0,1)--(6,1)--(6,0)--(0,0)--(0,1);
					\node at (3,.5) {$M_\beta^-$} ;
					
					\draw[thick,fill=gray!15](0,2)--(6,2)--(6,1)--(0,1)--(0,2);
					\node at (3,1.5) {$M_\beta^+$} ;
					
					\begin{scope}[ thick,decoration={
							markings,
							mark=at position 0.6 with {\arrow{>}}}
						] 
						\draw[postaction={decorate}] (6,0)--(6,1);
						\draw[postaction={decorate}] (0,2)--(0,1);
					\end{scope}
					
					\begin{scope}[ thick,decoration={
							markings,
							mark=at position 0.6 with {\arrow{>>>}}}
						] 
						\draw[postaction={decorate}] (6,1)--(6,2);
						\draw[postaction={decorate}] (0,1)--(0,0);
					\end{scope}
					
					\begin{scope}[ thick,decoration={
							markings,
							mark=at position 0.55 with {\arrow{>>>>}}}
						] 
						\draw[postaction={decorate}] (0,1)--(6,1);
					\end{scope}
					
				\end{tikzpicture}
			}
			\caption*{$M_\beta$ is a Möbius strip of width $2\beta$, $M_\beta^-$ and $M_\beta^+$ can be glued together to form a Möbius strip of the same width}
		\end{subfigure}
		\hfill

		\caption{The Klein bottle $K_\beta$ contains two Möbius strips of width $2\beta$ }
		\label{Fig: Klein Bottle from Mobius}
	\end{figure}
	
	The maps 
	\begin{align}
		\hat I &:\R^2 \to \R^2, &  \hat I(x,y)&=(x,-y)\\
		\hat J &:\R^2 \to \R^2,  & 	 \hat J(x,y)&=(x,y+2\beta)\\
		\hat T_h &:\R^2 \to \R^2, & 	 \hat T_h(x,y)&=(x+h,y), \quad \textnormal{for }h\in \R
	\end{align}
	are isometries of $(\R^2,g_{flat})$ and commute with $\Gamma_\beta$, so they induce isometries $I, J, T_h\in H_\beta:=\Isom(K_\beta, g_{flat})$. 
	
	By \Cref{Lemma: Characterization of CK}, every Riemannian metric on a Klein bottle is isometric to a metric of the form $\phi^2g_{flat}$ on $K_\beta$, for some $\beta>0$ and some smooth positive function $\phi : K_\beta\to (0,+\infty)$. Similar to the situation on the Möbius strip, $\Gamma_\beta$-invariant functions on $\R^2$ induce functions on $K_\beta$ and vice versa, so we use the two notions interchangeably. It follows from \Cref{Lemma: isometry projection} that metrics conformally equivalent to $g_{flat}^{C_\beta}$ which minimize the systolic area  are necessarily $H_\beta$-invariant, i.e. the conformal factor $\phi$ has to be constant in $x$ and satisfy
	\begin{align}\label{Eq: invariance properties of conformal factor on Kbeta}
		\phi(x,y)=\phi(x,-y)=\phi(x,y+2\beta).
	\end{align}
	This shows that all information about $\phi$ can be retrieved from the function $[0,\beta]\to (0,+\infty) $, $y\mapsto \phi(x,y)$, which we also denote by $\phi$. Conversely, any function on $[0,\beta]$ can be extended to a function on $K_\beta$ via \eqref{Eq: invariance properties of conformal factor on Kbeta}. Note that this extension process does yield continuous, but not  necessarily smooth functions, even if the initial function was smooth on $[0,\beta]$. Another consequence of \eqref{Eq: invariance properties of conformal factor on Kbeta} is that the $L^2$-inner product of $H_\beta$-invariant functions $f,h\in L^2(K_\beta,g_{flat})$ is given by
	\begin{equation}\label{Eq: L2 product on Klein}
		(f,h)_{L^2}=\int_{K_\beta} f(x,y)h(x,y)dxdy=4\pi \int_0^\beta f(y)h(y)dy,
	\end{equation}
	similar to \eqref{Eq: L2 product on Möbius} on the Möbius strip.
	
	In \cite{Bavard1988}, Bavard constructed (continuous, not necessarily smooth) metrics $g_\beta=\phi^2_\beta g_{flat}^{C_\beta}$ of minimal systolic area in each conformal class $C_\beta$ with the normalization $\sys(K_\beta, g_\beta)=\pi$. He proved the minimality using a criterion from \cite{Bavard1992}. Projecting onto the  conformal factor $\phi_\beta$ (similar to the projection in \Cref{Thm: Systolic inequality with remainder for the Mobius stip}) provides an alternative proof of minimality while also providing an estimate for the systolic defect. 
	
	The construction of $\phi_\beta$ is mostly based on the function $\phi_0$ (defined in  \eqref{Eq: Definition of phi_0}) and divides into four cases. In each case, \eqref{Eq: invariance properties of conformal factor on Kbeta} is used to obtain functions defined on $K_\beta$ from the description of  $\phi_\beta$ on $[0,\beta]$. 
	We denote the metric defined via $\phi_\beta$ by $g_{\beta}=\phi_\beta^2g_{flat}^{C_\beta}$.
	\begin{Case}\label{Case: flat bottle}
		$\beta\leq \frac{\pi}{4}$. 
		Here, $\phi_\beta$ is given by 
		\begin{equation}
			\phi_\beta : [0,\beta]\to(0,+\infty),\quad \phi_\beta(y)=\frac{\pi}{4\beta}.
		\end{equation}
		$(K_\beta,g_\beta)$ satisfies $\area(K_\beta,g_\beta)=\frac{\pi^3}{4\beta}$.
	\end{Case}
	\begin{Case}\label{Case: round-flat thin bottle}
		$\frac{\pi}{4}<\beta <\beta_0:=\log(1+\sqrt[]{2})$. 
		Here, $\phi_\beta$ is given by 
		\begin{equation}
			\phi_\beta : [0,\beta]\to(0,+\infty),\quad \phi_\beta(y)=
			\begin{cases}
				\phi_0(y), \quad &\textnormal{for } y \leq s_\beta,\\
				\phi_0(s_\beta), &\textnormal{for } y > s_\beta,
			\end{cases}
		\end{equation}
		where $s=s_\beta$ is the unique solution of the equation
		\begin{equation}
			\int_0^s \phi_0(y)dy + (\beta-s)\phi_0(s) = \pi/4, \quad s\in (0,\beta).
		\end{equation}
		$(K_\beta,g_\beta)$ satisfies $\area(K_\beta,g_\beta)=4\pi \tanh(s_\beta)+4\pi (\beta-s_\beta)\phi_0^2(s_\beta)$. For the definition of $s_\beta$, compare also \Cref{Fig: Graph of phibeta Klein bottle}.
	\end{Case}
	
	\begin{Case}\label{Case: round bottle}
		$\beta_0\leq \beta \leq \beta_1:=\log(2+\sqrt[]{3})$. 
		Here, $\phi_\beta$ is given by 
		\begin{equation}
			\phi_\beta : [0,\beta]\to(0,+\infty),\quad \phi_\beta(y)=\phi_0(y).
		\end{equation}
		$(K_\beta,g_\beta)$ satisfies $\area(K_\beta,g_\beta)=4\pi \tanh (\beta)$.
	\end{Case}
	
	\begin{Case}\label{Case: round-flat thick bottle}
		$\beta > \beta_1$. 
		Here, $\phi_\beta$ is given by 
		\begin{equation}
			\phi_\beta : [0,\beta]\to(0,+\infty),\quad \phi_\beta(y)=
			\begin{cases}
				\phi_0(y), \quad &\textnormal{for } y \leq \beta_1,\\
				\frac{1}{2}=\phi_0(\beta_1), &\textnormal{for } y > \beta_1,
			\end{cases}
		\end{equation}
		$(K_\beta,g_\beta)$ satisfies $\area(K_\beta,g_\beta)=4\pi \tanh(\beta_1)+\pi (\beta-\beta_1)=2\pi\, \sqrt[]{3}+\pi(\beta-\beta_1)$.
	\end{Case}
	\setcounter{Case}{0}

	\begin{figure}
		\centering
		
		\begin{tikzpicture}[domain=-.5:1.762,scale=3]

			\draw[->] (-.5,0) -- (2.3,0) node[right] {$y$};
			\draw[->] (0,-.3) -- (0,1.3);

			\draw[dashed, thin] (-.5,1)--(2.3,1);
			\node[anchor= east] at  (-.5,1){$1$};	
			
			\draw[dashed, thin] (1.762,-.1) -- (1.762,1.3);
			\node[anchor=north] at  (1.762,-.1){$\beta$};	
			
			\draw[dashed, thin] (1,-.1) -- (1,0.887);
			\node[anchor=north] at  (1,-.1){$s_\beta$};

			\node[anchor=west] at (1.762,.887) {$\phi_\beta$};
			\node[anchor=south west] at (1.3,.58) {$\phi_0$};			
			
			\draw[thick]    plot[domain=-.5:1] (\x,{2*exp(\x/2)/(1+exp(\x))}  ) ;
			\draw[thick]    plot[domain=1:1.762] (\x,{2*exp(1/2)/(1+exp(1))}  ) ;
			\draw[dotted]    plot[domain=1:1.762] (\x,{2*exp(\x/2)/(1+exp(\x))}  ) ;
			
		\end{tikzpicture}

		\caption{The graph of $\phi_\beta$ for $\frac{\pi}{4}<\beta<\beta_0$. The function $s\mapsto I(s):=\int_0^s \phi_0(y)dy + (\beta-s)\phi_0(s) $ gives the integral of the continuous function that restricts to $\phi_0$ on $[0,s]$ and is constant on $[s,\beta]$. $I$ is continuous, strictly decreasing on $[0,\beta]$ and satisfies $I(0)=\beta>\pi/4$, $I(\beta)=\int_0^\beta \phi_0(y)dy< \pi/4$. $s_\beta:=I^{-1}(\pi/4)\in (0,\beta)$ is hence well-defined.}
		\label{Fig: Graph of phibeta Klein bottle}
	\end{figure}
	
	The fact that $g_\beta$ satisfies $\sys(K_\beta,g_\beta)=\pi$ for all $\beta>0$ can be seen as follows: 
	consider a noncontractible closed curve $\gamma:[a,b]\to K_\beta$ and a lift $\hat{\gamma}=(\hat{x},\hat{y}):[a,b]\to \R^2$ of $\gamma$ to the universal cover. We call $\gamma$ \textit{vertical}, if the image of $\hat{y}$ contains an interval of length $4\beta$ (or equivalently, if $\gamma$ considered as a map to the fundamental region $[0,\pi)\times [-2\beta,2\beta)$ intersects the line $[0,\pi)\times\{c\}$ nontrivially for every $c\in [-2\beta,2\beta)$). Otherwise, $\gamma$ is called \textit{horizontal}. 
	If $\gamma$ is vertical, the invariance in the $x$-direction of $\phi_\beta$ and consequently of $g_\beta$ implies $L_{g_\beta}(\gamma)\geq L_{g_\beta}(\gamma_0)$, where $\gamma_0$ is given by
	\begin{equation}
		\gamma_0: [-2\beta,2\beta]\to K_\beta,\quad \gamma_0(t)=p(0,t). 
	\end{equation}
	A direct calculation involving the invariance properties of $\phi_\beta$ shows 
	\begin{equation}
		L_{g_\beta}(\gamma_0)=\int_{-2\beta}^{2\beta} \phi_\beta(t)dt=4\int_0^\beta \phi_\beta(t)dt. 
	\end{equation}
	By construction, we have $\int_0^\beta \phi_\beta(y)dy\geq \pi/4$, hence $L_{g_\beta}(\gamma)\geq \pi$ holds for every vertical curve $\gamma$.
	
	If $\gamma$ is horizontal, we can replace it by a curve $\tilde{\gamma}$ of the same length which is completely contained in the Möbius strip $M_\beta\subset K_\beta$. For this, consider the map 
	\begin{equation*}
		[0,\pi]\times [-2\beta,2\beta]\to [0,\pi]\times [-\beta,\beta],\quad (x,y)\mapsto 
		\begin{cases}
			(x,y)\quad 		&\textnormal{for } y\in [-\beta,\beta]\\
			(x,2\beta- y) 	&\textnormal{for } y\in (\beta,2\beta]\\
			(x,-2\beta- y) 	&\textnormal{for } y\in [-2\beta,-\beta).
		\end{cases}
	\end{equation*}
	It induces a continuous map $R:K_\beta\to M_\beta$ which restricts to the identity on $M_\beta$ and maps the region $M_{\beta}^\mp$ to its reflection along the line $p(\{(x,\pm \beta)|x\in [0,\pi]\})$ (compare \Cref{Fig: Klein Bottle from Mobius}). Now define $\tilde{\gamma}:=R\circ\gamma$. 
	The fact that $\gamma$ is horizontal implies that its lift $\hat{\gamma}=(\hat{x},\hat{y})$ satisfies $\hat{x}(a)\neq \hat{x}(b)$, hence $\tilde{\gamma}:[a,b]\to M_\beta$ is noncontractible. 
	A direct comparison between $\phi_\beta$ and the conformal factor of the optimal metric $g_{\beta}^M$ in the conformal class $C_\beta\in \mathpcal{C}(M)$ on the Möbius strip $M_\beta$ (see \eqref{Eq: Def of conf factor mobius}) gives $L_{g_\beta}(\gamma)=L_{g_\beta}(\tilde{\gamma})\geq L_{g^M_\beta}(\tilde{\gamma})$. 
	Combined with $L_{g^M_\beta}(\tilde{\gamma})\geq\sys(M_\beta,g_{\beta}^M)=\pi$, we obtain $L_{g_\beta}(\gamma)\geq \pi$ for every horizontal curve $\gamma$, thus proving $\sys(K_\beta,g_\beta)\geq \pi$. 
	
	For the other inequality, one can simply consider curves attaining the systole: in \Cref{Case: flat bottle} (and \Cref{Case: round-flat thin bottle}), the vertical curve $\gamma_0$ is of length $\pi$, in \Cref{Case: round-flat thin bottle,Case: round bottle,Case: round-flat thick bottle} the horizontal curve $\gamma:[0,\pi]\to K_\beta$, $\gamma(t)=p(t,0)$ has length $\pi$.
	
	Similar to the notion of horizontal and vertical curves presented here, Definition 1.1 of \cite{ElMir2015} distinguishes certain homotopy classes of noncontractible closed curves on the Klein bottle. The minimal lengths of curves in these homotopy classes are then related to the area of the Klein bottle and yield inequalities similar to the conformal inequalities found in \cite{Blatter1961a}. However, will not discuss these results here.

	\begin{Rem}\label{Rem: motivation for construction}
		The constructions for all four cases have geometric interpretation when one considers the embedded Möbius strip $M_\beta\subset K_\beta$: the normalization $\sys(K_\beta,g)=\pi$ implies $\sys(M_\beta,\restr{g}{M_\beta})\geq \pi$ and $(K_\beta,g)$ has precisely twice as much area as $(M_\beta,g)$. So in order to construct a systolically optimal metric $g=\phi^2g_{flat}$ on $K_\beta$, one tries to minimize the area of $(M_\beta,g)$ (or equivalently, the $L^2$-norm of $\phi$) under the constraints $\sys(M_\beta,g)\geq \pi$ as well as $\int_0^\beta \phi(y)dy\geq \pi/4$. The latter inequality comes from the fact that the vertical curves also have to be of length at least $\pi$. 
		
		In the thin case (\Cref{Case: flat bottle}), these vertical curves (described precisely in \Cref{Sec: Projection Lemma}) completely dominate the situation (in the sense that if one minimizes the $L^2$-norm of $\phi$ while satisfying $\int_0^\beta \phi(y)dy\geq \pi/4$, the constraint $\sys(M_\beta,\restr{g}{M_\beta})\geq \pi$ is automatically satisfied), so the fact that the flat Klein bottle (i.e. $\phi$ being constant) is area-minimizing follows directly from the Cauchy-Schwarz-inequality. 
		
		In \Cref{Case: round-flat thin bottle}, both the vertical curves as well as the horizontal closed curves in $M_\beta$ play an important role. One would like to use just $\phi_0$, as it is area-minimizing with the property $\sys(M_\beta,g)\geq \pi$, but for $\beta<\beta_0$ this does not satisfy $\int_0^\beta \phi(y)dy\geq \pi/4$ (i.e. the vertical curves are too short). It turns out (this is what makes it a bit more complicated) that the two constraints can be satisfied in an area-minimizing way by taking $\phi_\beta(y)=\phi_0(y)$ for $y\leq s_\beta$ and constant for $y>s_\beta$, where the definition of $s_\beta$ precisely ensures $\int_0^\beta \phi_\beta(y)dy=\pi/4$. 
		
		For \Cref{Case: round bottle,Case: round-flat thick bottle}, the embedded Möbius strip is wide enough so that the vertical curves are naturally long, (i.e. $\sys(M_\beta,\restr{g}{M_\beta})\geq \pi$ already implies $\int_0^\beta \phi(y)dy\geq \pi/4$), hence one only needs to ensure that all horizontal curves have length at least $\pi$. Consequently, the construction is similar to the construction on the Möbius strip. 
	\end{Rem}
	For all $\beta>0$ we have $\sys(M_\beta,g_{\beta})=\pi$, so the systolic area is given by
	\begin{equation}
		\alpha_{sys}(K_\beta,g_\beta)=
		\begin{cases}
			\frac{\pi}{4\beta},\quad &\textnormal{for } \beta\leq \frac{\pi}{4},\\			
			\frac{4}{\pi} \tanh(s_\beta)+\frac{4}{\pi}(\beta-s_\beta)\phi_0^2(s_\beta),\quad &\textnormal{for } \frac{\pi}{4}<\beta<\beta_0,\\
			\frac{4}{\pi}\tanh \beta,\quad &\textnormal{for }\beta_0\leq \beta\leq \beta_1 ,\\
			\frac{2\,\sqrt[]{3}}{\pi} +\frac{\beta-\beta_1}{\pi} ,\quad &\textnormal{for } \beta > \beta_1.
		\end{cases}
	\end{equation}
	Note that $\alpha_{sys}(K_{\beta_0},g_{\beta_0})=\frac{2\, \sqrt[]{2}}{\pi}$ is the minimum of the function $\beta \mapsto \alpha_{sys}(K_\beta,g_\beta)$, with $\beta_0=\log(1+\,\sqrt[]{2})$ being the unique minimizer (see also \Cref{Fig: Graph of C mapsto alpha_sys} for a sketch of the graph). This fact is used to retrieve the systolic inequality for metrics in all conformal classes on the Klein bottle due to Bavard (see \cite{Bavard1986}) and provides a term depending on the conformal class in the estimate on the systolic defect (compare \Cref{Cor: Bavards inequality with defect}).
	
	Analogously to the situation on the Möbius strip, the map 
	\begin{equation}
		P_{\phi_\beta}: L^2(K_\beta, g_{flat}) \to L^2(K_\beta, g_{flat}),\quad P_{\phi_\beta}(\phi)=\frac{(\phi,\phi_\beta)_{L^2}}{(\phi_\beta,\phi_\beta)_{L^2}}\phi_\beta.
	\end{equation} 
	is the $L^2$-orthogonal projection onto the subspace $\R\phi_\beta  \subset L^2(K_\beta, g_{flat})$ and has essentially the same properties as the projection on the Möbius strip:
	\begin{Lemma}\label{Lemma: Projection Lemma for Klein bottle}
		Let $g=\phi^2 g_{flat}^{C_\beta}$ be a $H_\beta$-invariant Riemannian metric on $K_\beta$. Then the metric $P_{\phi_\beta}(\phi)^2 g_{flat}$ satisfies
		\begin{align}
			\area(K_\beta,g) &= \area (K_\beta, P_{\phi_\beta}(\phi)^2 g_{flat}) + \|\phi-P_{\phi_\beta}(\phi)\|_{L^2}^2 \label{Eq: Equality in klein-projection}\\
			\sys(K_\beta,g)&\leq \sys (K_\beta, P_{\phi_\beta}(\phi)^2 g_{flat})			\label{Eq: projection-ineq for klein proj}
		\end{align}
	\end{Lemma}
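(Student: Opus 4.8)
The plan is to mirror the proof of \Cref{Lemma: Projection Lemma for Mobius}. The area identity \eqref{Eq: Equality in klein-projection} requires no new idea: since $P_{\phi_\beta}$ is an $L^2$-orthogonal projection and $\area(K_\beta,\psi^2 g_{flat})=\|\psi\|_{L^2}^2$, the Pythagorean identity $\|\phi\|_{L^2}^2=\|P_{\phi_\beta}(\phi)\|_{L^2}^2+\|\phi-P_{\phi_\beta}(\phi)\|_{L^2}^2$ gives the claim verbatim as in \eqref{Eq: Equality in möbius-projection}. For the systole inequality \eqref{Eq: projection-ineq for klein proj}, I would first note that $P_{\phi_\beta}(\phi)$ is the constant multiple $\frac{(\phi,\phi_\beta)_{L^2}}{(\phi_\beta,\phi_\beta)_{L^2}}\phi_\beta$ of $\phi_\beta$, so by the linear scaling of the systole and $\sys(K_\beta,g_\beta)=\pi$ we get $\sys(K_\beta,P_{\phi_\beta}(\phi)^2 g_{flat})=\pi\,(\phi,\phi_\beta)_{L^2}/(\phi_\beta,\phi_\beta)_{L^2}$. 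Hence \eqref{Eq: projection-ineq for klein proj} is equivalent to $(\phi,\phi_\beta)_{L^2}/(\phi_\beta,\phi_\beta)_{L^2}\ge \sys(K_\beta,g)/\pi$, and since both sides scale identically under $\phi\mapsto\lambda\phi$, I may assume $\sys(K_\beta,g)=\pi$. Using the inner product formula \eqref{Eq: L2 product on Klein}, everything reduces to the single scalar inequality $\int_0^\beta\phi\,\phi_\beta\,dy\ge\int_0^\beta\phi_\beta^2\,dy$.

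The two geometric inputs I would extract from $\sys(K_\beta,g)=\pi$ are: (i) the \emph{vertical} constraint $\int_0^\beta\phi(y)\,dy\ge\pi/4$, coming from $L_g(\gamma_0)=4\int_0^\beta\phi\ge\pi$; and (ii) a one-parameter family of Pu inequalities. For every $\tau\le\beta$ the smaller Möbius strip $M_\tau\subset M_\beta\subset K_\beta$ satisfies $\sys(M_\tau,\restr{g}{M_\tau})\ge\pi$, because a noncontractible loop in $M_\tau$ remains noncontractible in $K_\beta$; applying \eqref{Eq: Pus inequality} on $M_\tau$ (equivalently, \eqref{Eq: Pus Möbius equation} with $\beta$ replaced by $\tau$) then yields $\int_0^\tau\phi\,\phi_0\,dy\ge\int_0^\tau\phi_0^2\,dy=\tanh\tau$ for all $\tau\in(0,\beta]$.

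With these in hand the four cases split naturally. For $\beta\le\pi/4$ the factor $\phi_\beta\equiv\pi/(4\beta)$ is constant, and the inequality is just $\frac{\pi}{4\beta}\int_0^\beta\phi\ge\frac{\pi^2}{16\beta}$, i.e. the vertical constraint (i). For $\beta_0\le\beta$ the Klein factor $\phi_\beta$ on $[0,\beta]$ coincides exactly with the optimal Möbius factor of \eqref{Eq: Def of conf factor mobius} (namely $\phi_0$ for $\beta\le\beta_1$ and $\max\{\phi_0,\tfrac12\}$ for $\beta>\beta_1$); since $\sys(M_\beta,\restr{g}{M_\beta})\ge\pi$, the desired inequality is literally the scalar inequality already proven inside \Cref{Lemma: Projection Lemma for Mobius}, so these two cases need no extra work.

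The genuinely new case, and the main obstacle, is the intermediate range $\pi/4<\beta<\beta_0$, where $\phi_\beta$ equals $\phi_0$ on $[0,s_\beta]$ and the constant $c:=\phi_0(s_\beta)$ on $[s_\beta,\beta]$. Splitting the target integral at $s_\beta$, bounding $\int_{s_\beta}^\beta\phi\ge\pi/4-\int_0^{s_\beta}\phi$ by (i), and using the defining relation $\int_0^{s_\beta}\phi_0+c(\beta-s_\beta)=\pi/4$ of $s_\beta$ to rewrite the right-hand side, I expect the whole inequality to collapse to $\int_0^{s_\beta}(\phi-\phi_0)(\phi_0-c)\,dy\ge0$. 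This last inequality is where the family (ii) is essential: writing $G(\tau):=\int_0^\tau(\phi-\phi_0)\phi_0\,dy$, which is nonnegative by (ii) and satisfies $G(0)=0$, and $w:=1-c/\phi_0$, which is nonnegative and decreasing on $[0,s_\beta]$ and vanishes at $s_\beta$ (as $\phi_0$ decreases to $c$), an integration by parts turns $\int_0^{s_\beta}(\phi-\phi_0)(\phi_0-c)\,dy=\int_0^{s_\beta}w\,G'\,dy$ into $-\int_0^{s_\beta}G\,w'\,dy$, with no boundary terms since $w(s_\beta)=0=G(0)$. As $G\ge0$ and $w'\le0$, this is nonnegative, completing the case. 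The subtle point to get right is precisely this monotonicity and sign bookkeeping, together with checking that the one-parameter Pu family is available down to every $\tau\le s_\beta$; the remaining manipulations are routine.
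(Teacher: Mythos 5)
Your proposal is correct and follows the paper's route almost step for step: the same scaling reduction of \eqref{Eq: projection-ineq for klein proj} to the scalar inequality of \Cref{Lemma: Projection Lemma}, the same two geometric inputs (the vertical constraint $\int_0^\beta\phi\,dy\geq\pi/4$ and the family of Pu inequalities obtained by restricting to the sub-M\"obius strips $M_\tau\subset K_\beta$), and the same case division, with the cases $\beta\leq\pi/4$ and $\beta\geq\beta_0$ handled exactly as in the paper (your observation that the range $\beta\geq\beta_0$ is literally the scalar computation inside \Cref{Lemma: Projection Lemma for Mobius} is also how the paper argues, just stated explicitly). In the hard case $\pi/4<\beta<\beta_0$ your algebra is right and reduces, as in the paper, to the key inequality $\int_0^{s_\beta}(\phi-\phi_0)(\phi_0-c)\,dy\geq 0$ with $c=\phi_0(s_\beta)$; the one genuine deviation is how you prove it. The paper derives it from \Cref{Lemma: Decreasing function Lemma}, a general statement for merely continuous decreasing weights, proved via Dini derivatives; you instead integrate by parts against the smooth weight $w=1-c/\phi_0$, using $G(\tau)=\int_0^\tau(\phi-\phi_0)\phi_0\,dy\geq 0$, $w\geq 0$, $w'\leq 0$ and $w(s_\beta)=G(0)=0$. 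This is legitimate since $\phi_0$ is smooth on $[0,s_\beta]$ (so no generality is actually needed there), and it is essentially the simplification the paper's own remark after \Cref{Lemma: Decreasing function Lemma} anticipates for differentiable $h$; what the paper's formulation buys is a self-contained lemma valid without any differentiability, while your version is shorter and more transparent in this application.
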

	Equality in \eqref{Eq: Equality in klein-projection} can be proven analogous to the proof of the similar equality in  \Cref{Lemma: isometry projection}.
	However, proving that $P_{\phi_\beta}$ does not decrease the systole requires more than just Pu's inequality \eqref{Eq: Pus inequality}, because in $K_\beta$ there are more noncontractible closed curves to consider than in $M_\beta$. It is a consequence of the following result, whose proof can be found in \Cref{Sec: Projection Lemma}:
	\begin{Lemma}[Projection inequality]\label{Lemma: Projection Lemma}
		Let $\beta>0$ and $g=\phi^2g_{flat}^{C_\beta}$ be a $H_\beta$-invariant Riemannian metric on $K_\beta$ such that $\sys(M_\beta,g)=\pi$. Then 
		\begin{equation}
			\int_0^\beta \phi(y)\phi_\beta(y)dy\geq \int_0^\beta \phi_\beta^2(y)dy. 
		\end{equation} 
	\end{Lemma}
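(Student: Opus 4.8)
The plan is to reduce everything to inequalities between integrals over $[0,\beta]$ and to split according to the four cases defining $\phi_\beta$. By \eqref{Eq: L2 product on Klein} the asserted inequality is literally $\int_0^\beta \phi\,\phi_\beta\,dy\ge\int_0^\beta \phi_\beta^2\,dy$, so no reformulation is needed. From the systole normalization I would extract two ingredients. First, every noncontractible closed curve inside the embedded Möbius strip has length at least $\pi$; since $M_t\subset M_\beta$ for $t\le\beta$, the argument producing Pu's inequality \eqref{Eq: Pus inequality} applies verbatim on each subinterval and yields the \emph{restricted} bound $\int_0^t \phi\,\phi_0\,dy\ge\int_0^t \phi_0^2\,dy$ for all $t\le\beta$ (this is exactly the mechanism behind \eqref{Eq: Pus ineq restricted}). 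Second, the vertical closed geodesic $\gamma_0$ of $K_\beta$ is noncontractible, so its length $4\int_0^\beta\phi\,dy$ is at least $\pi$, giving $\int_0^\beta\phi\,dy\ge\pi/4$.

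The three ``outer'' cases are then immediate. In \Cref{Case: round bottle} ($\phi_\beta=\phi_0$) the claim is exactly Pu's inequality \eqref{Eq: Pus inequality}. In \Cref{Case: round-flat thick bottle} the function $\phi_\beta$ coincides on $[0,\beta]$ with the optimal Möbius factor of \eqref{Eq: Def of conf factor mobius}, so the inequality is precisely the one already established inside the proof of \Cref{Lemma: Projection Lemma for Mobius} (restricted Pu on $[0,\beta_1]$ together with $\phi\ge\tfrac12$ on $[\beta_1,\beta]$, the latter forced by the curves $\lambda_\tau$), and I would simply invoke that computation. In \Cref{Case: flat bottle} ($\phi_\beta\equiv\tfrac{\pi}{4\beta}$) the inequality collapses, after cancelling the constant, to $\int_0^\beta\phi\,dy\ge\pi/4$, which is the vertical bound above.

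The remaining \Cref{Case: round-flat thin bottle} is the real obstacle, since here the horizontal and the vertical constraints genuinely interact. The plan is to first use the defining relation $\int_0^{s_\beta}\phi_0\,dy+(\beta-s_\beta)\phi_0(s_\beta)=\pi/4$ of $s_\beta$ together with the vertical bound to rewrite the target. Splitting the integral at $s_\beta$, substituting $\int_{s_\beta}^\beta\phi\,dy\ge \pi/4-\int_0^{s_\beta}\phi\,dy$ (multiplied by $\phi_0(s_\beta)>0$), and cancelling common terms, one is left with the clean statement
\begin{equation*}
\int_0^{s_\beta}\bigl(\phi-\phi_0\bigr)\bigl(\phi_0-\phi_0(s_\beta)\bigr)\,dy\ \ge\ 0.
\end{equation*}

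This is where the restricted Pu inequality enters. The factor $\phi_0-\phi_0(s_\beta)$ is nonnegative, and since $\phi_0$ is decreasing on $[0,s_\beta]$ the function $w:=1-\phi_0(s_\beta)/\phi_0$ is nonnegative and decreasing with $w(s_\beta)=0$. Writing the integrand as $\bigl[(\phi-\phi_0)\phi_0\bigr]\,w$ and setting $U(t):=\int_0^t(\phi-\phi_0)\phi_0\,dy$, the restricted bound says exactly that $U\ge0$ on $[0,s_\beta]$ and $U(0)=0$. An integration by parts (Abel summation) gives $\int_0^{s_\beta}(\phi-\phi_0)\phi_0\,w\,dy=w(s_\beta)U(s_\beta)-\int_0^{s_\beta}U\,w'\,dy$, and since $w(s_\beta)=0$, $U\ge0$ and $w'\le0$, both terms are nonnegative. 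I expect the monotonicity bookkeeping in this last step — matching the $\phi_0$-weighted partial integrals coming from Pu against the monotone factor $w$ — to be the only subtle point; the other three cases amount to arranging facts already present in the excerpt.
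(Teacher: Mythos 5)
Your proof is correct, and its skeleton matches the paper's exactly: the same four-case split, the vertical-curve bound $\int_0^\beta\phi\,dy\ge\pi/4$, Pu's inequality \eqref{Eq: Pus inequality} applied directly in \Cref{Case: round bottle}, the restricted-Pu-plus-$\phi\ge\tfrac12$ argument in \Cref{Case: round-flat thick bottle}, and the Cauchy--Schwarz-flavoured cancellation in \Cref{Case: flat bottle}. The one genuine divergence is in \Cref{Case: round-flat thin bottle}. There the paper writes $\phi=\phi_\beta+\sigma$, uses restricted Pu to get $\int_0^z\sigma\phi_\beta\,dy\ge0$ for all $z\le s_\beta$, and feeds this into a separately proved monotonicity result (\Cref{Lemma: Decreasing function Lemma}), whose proof runs through Dini derivatives so as to cover merely continuous decreasing weights; the output $\int_0^{s_\beta}\sigma\phi_\beta\,dy\ge\phi_\beta(s_\beta)\int_0^{s_\beta}\sigma\,dy$ is, after rearranging, precisely your reduced statement $\int_0^{s_\beta}(\phi-\phi_0)\bigl(\phi_0-\phi_0(s_\beta)\bigr)\,dy\ge0$. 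You reach that statement in one algebraic substitution (the vertical bound combined with the defining relation of $s_\beta$, which I checked and which is valid since the substituted lower bound is multiplied by $\phi_0(s_\beta)>0$), and you prove it by Abel summation against the weight $w=1-\phi_0(s_\beta)/\phi_0$, which is nonnegative, decreasing and vanishes at $s_\beta$, using $U\ge0$, $U(0)=0$ and $w'\le0$. This is legitimate because $\phi_0$ is smooth and positive on $[0,s_\beta]$, so $w$ and $U$ are $C^1$ and the integration by parts is classical; it is in effect the simplification the paper itself flags in the remark following \Cref{Lemma: Decreasing function Lemma}, namely that the Dini machinery is only needed for nondifferentiable weights, a generality unused in this application. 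What the paper's route buys is a reusable lemma at that level of generality; what yours buys is a shorter, self-contained treatment of the hard case with the positivity structure laid bare. One small point of hygiene, which you share with the paper's own case proofs: the vertical curve $\gamma_0$ is not contained in $M_\beta$, so the bound $\int_0^\beta\phi\,dy\ge\pi/4$ really uses $\sys(K_\beta,g)\ge\pi$ rather than the hypothesis $\sys(M_\beta,g)=\pi$ as literally stated in the lemma --- the paper's proofs of the individual cases likewise assume $\sys(K_\beta,g)=\pi$, so your reading agrees with the lemma's actual usage.
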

	We now show how \eqref{Eq: projection-ineq for klein proj} follows from the projection inequality, thus concluding the proof of \Cref{Lemma: Projection Lemma for Klein bottle}.
	Due to the scaling-invariance of \eqref{Eq: projection-ineq for klein proj}, we can assume that $\sys(K_\beta,g)=\pi$. Then it follows from a combination of \eqref{Eq: L2 product on Klein} and \Cref{Lemma: Projection Lemma} that 
	\begin{equation}
		\frac{(\phi,\phi_\beta)_{L^2}}{(\phi_\beta,\phi_\beta)_{L^2}}\geq 1
	\end{equation}
	and hence $P_{\phi_\beta}(\phi)\geq \phi_\beta$. Thus
	\begin{equation}
		\sys(K_\beta,P_{\phi_\beta}(\phi)^2g_{flat})\geq \sys(K_\beta, \phi_\beta^2 g_{flat})=\pi = \sys(K_\beta,g), 
	\end{equation}
	completing the proof. 
	
	As a consequence, we obtain the conformal systolic inequality of \cite{Bavard1988} with an estimate on the systolic defect, which also implies the general systolic inequality on the Klein bottle from \cite{Bavard1986}. 
	\begin{Thm}\label{Thm: conformal inequality klein with defect}
		Let $\beta>0$ and $g=\phi^2 g_{flat}^{C_\beta}\in C_\beta$ be a Riemannian metric on $K_\beta$. 
		Then 
		\begin{equation}
			\alpha_{sys}(K,C_\beta)=\alpha_{sys}(K_\beta,g_\beta),
		\end{equation}
		and
		\begin{equation}\label{Eq: conformal inequality klein with defect}
			\dfrac{\|\phi- P_{\phi_\beta}(\phi)\|^2_{L^2}}{\sys^2(K_\beta,g)}\leq \alpha_{sys}(K_\beta,g)-\alpha_{sys}(K,C_\beta).
		\end{equation}
		In particular, $\alpha_{sys}(K_\beta,g)=\alpha_{sys}(K_\beta,C_\beta)$ holds if and only if $g$ is a constant multiple of $g_\beta=\phi_\beta^2g_{flat}$. 
	\end{Thm}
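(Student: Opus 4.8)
The plan is to imitate, line for line, the two-step projection argument already carried out for the Möbius strip in \Cref{Thm: Systolic inequality with remainder for the Mobius stip}, now feeding in the Klein bottle versions of the two projection lemmas. First I would apply \Cref{Lemma: isometry projection} with $N=K_\beta$, $g=g_{flat}^{C_\beta}$ and $H=H_\beta=\Isom(K_\beta,g_{flat})$. This produces the area decomposition $\area(K_\beta,g)=\area(K_\beta,P_{H_\beta}(\phi)^2g_{flat})+\|\phi-P_{H_\beta}(\phi)\|^2_{L^2}$ together with the systole inequality $\sys(K_\beta,g)\leq\sys(K_\beta,P_{H_\beta}(\phi)^2g_{flat})$. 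Since $P_{H_\beta}(\phi)$ is $H_\beta$-invariant by construction, the metric $P_{H_\beta}(\phi)^2g_{flat}$ is $H_\beta$-invariant, so I may apply \Cref{Lemma: Projection Lemma for Klein bottle} to it, obtaining a second area equality and a second systole inequality with $P_{\phi_\beta}$ in place of $P_{H_\beta}$.

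The key algebraic observation is that $\phi_\beta$ is itself $H_\beta$-invariant (constant in $x$, even and $2\beta$-periodic in $y$), so the line $\R\phi_\beta$ lies in $L^2_{H_\beta}(K_\beta)$ and therefore $P_{\phi_\beta}\circ P_{H_\beta}=P_{\phi_\beta}$. Hence the two projections compose to $P_{\phi_\beta}(P_{H_\beta}(\phi))=P_{\phi_\beta}(\phi)$, and orthogonality of the projections (the Pythagorean theorem) gives $\|\phi-P_{H_\beta}(\phi)\|^2_{L^2}+\|P_{H_\beta}(\phi)-P_{\phi_\beta}(\phi)\|^2_{L^2}=\|\phi-P_{\phi_\beta}(\phi)\|^2_{L^2}$. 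Chaining the two area equalities and the two systole inequalities then yields $\area(K_\beta,g)=\area(K_\beta,P_{\phi_\beta}(\phi)^2g_{flat})+\|\phi-P_{\phi_\beta}(\phi)\|^2_{L^2}$ and $\sys(K_\beta,g)\leq\sys(K_\beta,P_{\phi_\beta}(\phi)^2g_{flat})$.

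Dividing the area equality by $\sys^2(K_\beta,g)$ and bounding the systole of the projected metric from below exactly as in the Möbius computation gives the inequality $\tfrac{\|\phi-P_{\phi_\beta}(\phi)\|^2_{L^2}}{\sys^2(K_\beta,g)}\leq\alpha_{sys}(K_\beta,g)-\alpha_{sys}(K_\beta,P_{\phi_\beta}(\phi)^2g_{flat})$. Because $P_{\phi_\beta}(\phi)$ is a positive multiple of $\phi_\beta$, scaling invariance of the systolic area identifies $\alpha_{sys}(K_\beta,P_{\phi_\beta}(\phi)^2g_{flat})=\alpha_{sys}(K_\beta,g_\beta)$. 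In particular $\alpha_{sys}(K_\beta,g)\geq\alpha_{sys}(K_\beta,g_\beta)$ for every $g\in C_\beta$; taking the infimum over the conformal class shows $\alpha_{sys}(K,C_\beta)=\alpha_{sys}(K_\beta,g_\beta)$, and substituting this into the displayed bound yields \eqref{Eq: conformal inequality klein with defect}. For the characterization of equality: a vanishing right-hand side forces $\|\phi-P_{\phi_\beta}(\phi)\|^2_{L^2}=0$, hence $\phi=P_{\phi_\beta}(\phi)$ is a positive multiple of $\phi_\beta$, i.e. $g$ is a constant multiple of $g_\beta$; conversely, for $g=\lambda^2g_\beta$ scaling invariance gives $\alpha_{sys}(K_\beta,g)=\alpha_{sys}(K_\beta,g_\beta)=\alpha_{sys}(K,C_\beta)$.

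All the genuine geometric difficulty has already been isolated in the projection inequality \Cref{Lemma: Projection Lemma}, whose proof is deferred; granting the two projection lemmas, the argument above is purely formal bookkeeping. The only point that requires care is verifying $\phi_\beta\in L^2_{H_\beta}(K_\beta)$, so that $P_{\phi_\beta}\circ P_{H_\beta}=P_{\phi_\beta}$ and the Pythagorean identity may be applied to merge the two orthogonal decompositions; this is exactly the step that makes the iterated projection collapse to the single projection $P_{\phi_\beta}$ appearing in the statement.
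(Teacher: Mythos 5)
Your proposal is correct and follows exactly the route the paper intends: the paper's proof of \Cref{Thm: conformal inequality klein with defect} simply invokes \Cref{Lemma: isometry projection} and \Cref{Lemma: Projection Lemma for Klein bottle} and declares the argument ``similar to the proof of \Cref{Thm: Systolic inequality with remainder for the Mobius stip}'', which is precisely the two-step projection scheme you reproduce, including the key composition $P_{\phi_\beta}\circ P_{H_\beta}=P_{\phi_\beta}$ justified by the $H_\beta$-invariance of $\phi_\beta$, the Pythagorean merging of the two orthogonal decompositions, the scaling-invariance step identifying $\alpha_{sys}(K_\beta,P_{\phi_\beta}(\phi)^2g_{flat})=\alpha_{sys}(K_\beta,g_\beta)$, and the equality characterization in both directions. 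No gaps: your write-up is in effect the fully spelled-out version of the proof the paper leaves implicit.
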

	The proof is a direct consequence of \Cref{Lemma: isometry projection,Lemma: Projection Lemma for Klein bottle} and similar to the proof of \Cref{Thm: Systolic inequality with remainder for the Mobius stip}. Using the fact that $\beta\mapsto \alpha_{sys}(K,C_\beta)$ attains its minimum of $\frac{2\sqrt{2}}{\pi}$ at the unique minimizer $\beta=\beta_0$ gives the following stability estimate for Bavard's original systolic inequality for the Klein bottle:
	\begin{Cor}\label{Cor: Bavards inequality with defect}
		Let $g=\phi^2g_{flat}\in C_\beta$ be a Riemannian metric on a Klein bottle $K_\beta$, for some $\beta>0$. 
		Then 
		\begin{equation}
			\alpha_{sys}(K)=\alpha_{sys}(K_{\beta_0},g_{\beta_0})=\frac{2\sqrt{2}}{\pi}
		\end{equation}
		and
		\begin{align}
			\alpha_{sys}(K,C_\beta)-\alpha_{sys}(K)	+ \dfrac{\|\phi- P_{\phi_\beta}(\phi)\|^2_{L^2}}{\sys^2(K_\beta,g)} 
			\leq \alpha_{sys}(K_\beta,g)-\frac{2\sqrt{2}}{\pi}.
		\end{align}
		In particular, $\alpha_{sys}(K_\beta,g)=\frac{2\sqrt{2}}{\pi}$ holds if and only if $\beta=\beta_0$ and if $g$ is a constant multiple of $g_{\beta_0}$. 
	\end{Cor}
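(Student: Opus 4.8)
The plan is to derive this corollary almost directly from \Cref{Thm: conformal inequality klein with defect} together with the explicit computation of the function $\beta\mapsto\alpha_{sys}(K,C_\beta)$. First I would establish the value $\alpha_{sys}(K)=\frac{2\sqrt{2}}{\pi}$. Since every metric on a Klein bottle is isometric to one of the form $\phi^2g_{flat}$ on some $K_\beta$ by \Cref{Lemma: Characterization of CK}, we have $\alpha_{sys}(K)=\inf_{\beta>0}\alpha_{sys}(K,C_\beta)$, and \Cref{Thm: conformal inequality klein with defect} identifies $\alpha_{sys}(K,C_\beta)$ with the explicit quantity $\alpha_{sys}(K_\beta,g_\beta)$ computed in the four cases above. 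Using the fact, recorded after the four-case construction, that this function is strictly decreasing on $(0,\beta_0]$ and strictly increasing on $[\beta_0,+\infty)$, its infimum is attained uniquely at $\beta=\beta_0$; a short computation with $e^{\beta_0}=1+\sqrt{2}$ gives $\tanh\beta_0=\frac{1}{\sqrt{2}}$ and hence $\alpha_{sys}(K_{\beta_0},g_{\beta_0})=\frac{4}{\pi}\tanh\beta_0=\frac{2\sqrt{2}}{\pi}$, as claimed.

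Second, I would obtain the inequality by rearranging \Cref{Thm: conformal inequality klein with defect}. That theorem gives
\begin{equation*}
\frac{\|\phi-P_{\phi_\beta}(\phi)\|^2_{L^2}}{\sys^2(K_\beta,g)}\leq \alpha_{sys}(K_\beta,g)-\alpha_{sys}(K,C_\beta),
\end{equation*}
and moving $\alpha_{sys}(K,C_\beta)$ to the left-hand side and subtracting $\frac{2\sqrt{2}}{\pi}=\alpha_{sys}(K)$ from both sides yields exactly the asserted estimate. So the inequality itself is purely formal once the previous theorem and the value of $\alpha_{sys}(K)$ are in hand.

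Finally, for the equality discussion I would exploit that both summands on the left-hand side are nonnegative: $\alpha_{sys}(K,C_\beta)-\alpha_{sys}(K)\geq 0$ because $\alpha_{sys}(K)$ is the global infimum of $\beta\mapsto\alpha_{sys}(K,C_\beta)$, and the projection term is manifestly nonnegative. If $\alpha_{sys}(K_\beta,g)=\frac{2\sqrt{2}}{\pi}$, the right-hand side vanishes, forcing both summands to be zero. Vanishing of the first term together with the uniqueness of the minimizer $\beta_0$ forces $\beta=\beta_0$, while vanishing of the second gives $\phi=P_{\phi_\beta}(\phi)\in\R\phi_{\beta_0}$, i.e. $g$ is a constant multiple of $g_{\beta_0}$. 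The converse is immediate from the scaling-invariance of $\alpha_{sys}$. The only genuinely nonelementary ingredient -- and the place where all the work really sits -- is the monotonicity of the piecewise-defined function $\beta\mapsto\alpha_{sys}(K_\beta,g_\beta)$, which pins down $\beta_0$ as the unique global minimizer; this I would take from the explicit case analysis preceding the projection lemma rather than reprove here.
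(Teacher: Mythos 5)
Your proposal is correct and takes essentially the same route as the paper, which derives \Cref{Cor: Bavards inequality with defect} directly from \Cref{Thm: conformal inequality klein with defect} combined with the previously recorded fact that $\beta\mapsto\alpha_{sys}(K_\beta,g_\beta)$ attains its minimum $\frac{2\sqrt{2}}{\pi}$ uniquely at $\beta=\beta_0$. Your rearrangement of the inequality, the nonnegativity argument splitting the equality case into $\beta=\beta_0$ and $\phi\in\R\phi_{\beta_0}$, and the check $\tanh\beta_0=\frac{1}{\sqrt{2}}$ giving $\frac{4}{\pi}\tanh\beta_0=\frac{2\sqrt{2}}{\pi}$ are precisely the details the paper leaves implicit.
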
 
	
	\begin{Rem}\label{Rem: Klein bottle variance}
		Similar to the discussion preceding \Cref{Cor: Variance formulation Mobius}, any metric $g\in C_\beta$ can be written as $g=h^2g_\beta$ and $g=\phi^2g_{flat}^{C_\beta}$. The measure induced by $g_\beta$ can be normalized by the factor $\frac{1}{\area(K_\beta,g_\beta)}$ to give a probability measure, so  expected value $E(h)$ and variance $\Var(h)$ can be defined with respect to it. Direct calculations show that $P_{\phi_\beta}(\phi)=E(h)\phi_\beta$, hence \eqref{Eq: conformal inequality klein with defect} reformulates to 
		\begin{equation}
			\area(K_\beta,g_\beta) \dfrac{\Var(h)}{\sys^2(K_\beta,g)}\leq \alpha_{sys}(K_\beta,g)-\alpha_{sys}(K,C_\beta).
		\end{equation}
	\end{Rem}
	
	\section{Proof of the Projection Inequality}\label{Sec: Projection Lemma}
	
	The proof of \Cref{Lemma: Projection Lemma} divides into the four cases from the definition of $\phi_\beta$. \Cref{Case: flat bottle,Case: round bottle,Case: round-flat thick bottle} have fairly short proofs, however, for \Cref{Case: round-flat thin bottle} the situation is more complicated. Recall that $p:\R^2 \to K_\beta$ denotes the quotient map. 
	\begin{proof}[Proof of \Cref{Case: flat bottle}]
		We have $\beta\leq \frac{\pi}{4}$ and $\sys(K_\beta,g)=\pi$. The vertical curve $\gamma: [-2\beta,2\beta]\to K_\beta$, $\gamma(t)=p(0,t)$ is closed, noncontractible and its length is given by
		\begin{equation}\label{Eq: length of vertical curve}
			L_g(\gamma)=\int_{-2\beta}^{2\beta} \sqrt[]{g_{\gamma(t)}(\gamma'(t),\gamma'(t))} dt =  \int_{-2\beta}^{2\beta} \phi(t) dt = 4\int_0^\beta \phi(t)dt.
		\end{equation}
		By the definition of the systole, we have $L_g(\gamma)\geq \sys(K_\beta,g)=\pi$, so \eqref{Eq: length of vertical curve} gives us $\int_0^\beta \phi(t)dt\geq \frac{\pi}{4}$. As $\phi_\beta$ is constant, we can simply calculate
		\begin{equation}
			\int_0^\beta \phi(y)\phi_\beta(y) dy = \frac{\pi}{4\beta}\int_0^\beta \phi(y)dy \geq \frac{\pi}{4\beta}\frac{\pi}{4}=\frac{1}{\beta} \left(\frac{\pi}{4}\right)^2
		\end{equation} 
		and 
		\begin{equation}
			\int_0^\beta \phi_\beta^2(y) dy = \beta \left(\frac{\pi}{4\beta}\right)^2= \frac{1}{\beta} \left(\frac{\pi}{4}\right)^2,
		\end{equation}
		which gives the desired inequality
	\end{proof}
	\begin{proof}[Proof of \Cref{Case: round bottle}]
		Here, we are in the situation $\beta_0\leq \beta\leq \beta_1$, and $\phi_\beta=\phi_0$. 
		The Möbius strip $(M_\beta,\restr{g}{M_\beta})$ contained in $(K_\beta,g)$ satisfies $\sys(M_\beta, \restr{g}{M_\beta})\geq \sys(K_\beta,g)=\pi$ and $\restr{g}{M_\beta}=\restr{\phi^2}{M_\beta}g_{flat}$ is invariant under the isometry group of $(M_\beta,g_{flat})$ (due to the $H_\beta$-invariance of $g$). Thus, the assumptions for Pu's inequality \eqref{Eq: Pus inequality} are fulfilled and we obtain
		\begin{equation}
			\int_0^\beta \phi(y) \phi_0(y) dy  \geq \int_0^\beta \phi_0^2(y) dy. 
		\end{equation}
		Due to $\phi_\beta=\phi_0$, this finishes the proof.  
	\end{proof}
	\begin{proof}[Proof of \Cref{Case: round-flat thick bottle}]
		We now consider $\beta>\beta_1$. As on the thick Möbius strip, 
		the horizontal curves that wrap around twice, given by $\lambda_\tau:[-\pi/2,3\pi/2]\to K_\beta$, $\lambda_\tau(t)=p(t,\tau)$, have to be of length (with respect to the metric $g$) at least $\pi=\sys(K_\beta,g)$, which implies $\phi\geq \frac{1}{2}$. By restricting to the Möbius strip $M_{\beta_1}\subset K_\beta$ (where $\phi_\beta=\phi_0$ holds) and proceeding as above, Pu's inequality yields 
		\begin{equation}
			\int_0^{\beta_1}\phi(y)\phi_\beta(y)dy\geq \int_0^{\beta_1}\phi_\beta^2(y)dy. 
		\end{equation}
		Combining this with $\phi_0\geq \frac{1}{2}=\phi_\beta(y)$ for $y\geq \beta_1$ gives the desired
		\begin{align*}
			\int_0^\beta \phi(y)\phi_\beta(y)dy&=\int_0^{\beta_1} \phi(y)\phi_\beta(y)dy+\int_{\beta_1}^\beta \phi(y)\phi_\beta(y)dy\\
			&\geq \int_0^{\beta_1}\phi_\beta^2(y)dy + \int_{\beta_1}^\beta \phi_\beta(y)\phi_\beta(y)dy\\
			&=\int_0^\beta \phi_\beta^2(y)dy. 
		\end{align*}
	\end{proof}
	The proof of the remaining \Cref{Case: round-flat thin bottle} is based on the following general Lemma:
	\begin{Lemma}\label{Lemma: Decreasing function Lemma}
		Let $s>0$, $h:[0,s]\to (0,+\infty)$ be a continuous, positive and decreasing function and $g:[0,s]\to \R$ be continuous such that
		\begin{equation}
			\int_0^z h(y)g(y)dy \geq 0, \quad \forall z\in [0,s].
		\end{equation}
		Then 
		\begin{equation}
			\int_0^z h(y)g(y)dy \geq h(z)\int_0^z g(y)dy, \quad \forall z \in [0,s].
		\end{equation}
	\end{Lemma}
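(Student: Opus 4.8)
The plan is to reduce the claim to the single inequality
\begin{equation*}
\int_0^z \bigl(h(y)-h(z)\bigr)g(y)\,dy\geq 0, \qquad z\in[0,s],
\end{equation*}
which is just the asserted inequality after moving $h(z)\int_0^z g$ to the left. The hypothesis gives control over the primitive $F(y):=\int_0^y h(t)g(t)\,dt$, which is nonnegative and, since $h$ and $g$ are continuous, continuously differentiable with $F'=hg$ and $F(0)=0$. The whole difficulty is that the hypothesis constrains $F=\int hg$ and \emph{not} the bare primitive $G:=\int g$: indeed the most naive manipulation, writing $h(y)-h(z)=\int_{[y,z]}d(-h)$ and applying Fubini, expresses the left-hand side as $\int_{[0,z]} G(u)\,d(-h)(u)$, for which no sign information is available. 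The trick will be to integrate by parts in a way that makes $F$, rather than $G$, appear.

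To that end I would write $g\,dy=dF/h$ and factor the integrand as
\begin{equation*}
\int_0^z\bigl(h(y)-h(z)\bigr)g(y)\,dy=\int_0^z\Bigl(1-\tfrac{h(z)}{h(y)}\Bigr)\,dF(y),
\end{equation*}
and set $u(y):=1-h(z)/h(y)$. Because $h$ is positive, continuous and decreasing, $1/h$ is continuous and increasing, so $u$ is continuous and \emph{decreasing}; moreover $u(y)\geq 0$ for $y\leq z$ (as $h(y)\geq h(z)$), while $u(z)=0$. Integrating by parts, and using $u(z)=0$ together with $F(0)=0$ to kill the boundary terms, yields
\begin{equation*}
\int_0^z u(y)\,dF(y)=-\int_0^z F(y)\,du(y)=\int_0^z F(y)\,\bigl(-du(y)\bigr).
\end{equation*}
Since $u$ is decreasing, $-du$ is a nonnegative (Stieltjes) measure, and $F\geq 0$ by hypothesis; hence the right-hand side is nonnegative, which is exactly what is needed.

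The one point requiring care — and the main obstacle — is that $h$ is only assumed continuous and decreasing, not differentiable, so the integration by parts above must be read as a Riemann--Stieltjes identity: $F$ is $C^1$ (hence continuous and of bounded variation) and $u$ is monotone (hence of bounded variation, and continuous since it is built from the continuous positive $h$), so the Riemann--Stieltjes integrals $\int u\,dF$ and $\int F\,du$ both exist and satisfy $\int_0^z u\,dF+\int_0^z F\,du=[uF]_0^z$. Alternatively, one can first prove the inequality for smooth strictly decreasing $h$ — where $-du(y)=h(z)\,\bigl(-h'(y)\bigr)/h(y)^2\,dy\geq 0$ makes everything elementary — and then pass to a general continuous decreasing $h$ by uniform approximation from above by such functions, using the continuity of $g$ and dominated convergence to take the limit. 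Either route completes the proof.
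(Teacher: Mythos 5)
Your main argument is correct, and it takes a genuinely different route from the paper. The paper proceeds by contradiction: it sets $u(z)=\int_0^z h(y)g(y)\,dy-h(z)\int_0^z g(y)\,dy$, assumes $u(z_0)<0$, locates the last zero $a$ of $u$ before $z_0$, computes the lower right Dini derivative $D_+u(z)=-D_+h(z)\int_0^z g(y)\,dy$, notes that on $(a,z_0]$ the negativity of $u$ together with $h>0$ and the hypothesis $\int_0^z hg\,dy\geq 0$ forces $\int_0^z g(y)\,dy>0$, hence $D_+u\geq 0$ there, and then invokes the theorem (cited from Kannan--Krueger) that a continuous function with nonnegative Dini derivative is nondecreasing, contradicting $0=u(a)\leq u(z_0)<0$. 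Your proof is instead direct and constructive: fixing $z$, you rewrite the claim as $\int_0^z u(y)\,dF(y)\geq 0$ with $F(y)=\int_0^y hg\,dt$ and $u(y)=1-h(z)/h(y)$, check that $u$ is continuous, decreasing, nonnegative on $[0,z]$ with $u(z)=0$, and that $F$ is $C^1$, nonnegative, with $F(0)=0$, so that Riemann--Stieltjes integration by parts (legitimate since $u$ is continuous and $F$ is of bounded variation, and vice versa) kills the boundary terms and yields $\int_0^z u\,dF=\int_0^z F\,(-du)\geq 0$, the integrator $-du$ being nonnegative because $u$ is decreasing. Both proofs exploit the hypothesis in the same essential way (nonnegativity of the primitive $F$) and the monotonicity of $h$ as a sign condition on a derivative or measure; what yours buys is an explicit identity for the defect $\int_0^z hg\,dy-h(z)\int_0^z g\,dy$ and the avoidance of the contradiction argument and Dini-derivative machinery, at the mild cost of invoking Stieltjes integration, whereas the paper's argument stays within elementary real analysis plus one quotable monotonicity theorem.

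One caveat on your sketched alternative: approximating $h$ uniformly from above by smooth strictly decreasing $h_n$ does not transfer the hypothesis, since applying the smooth case to $h_n$ requires $\int_0^z h_n g\,dy\geq 0$ for all $z$, and the perturbation $\int_0^z (h_n-h)g\,dy$ has no sign even when $h_n\geq h$; if the original inequality holds with equality at some $z$, it can fail for $h_n$. The repair is to keep $F$ (built from the true $h$) fixed and approximate only the weight $u$ by smooth decreasing functions vanishing at $z$, then pass to the limit in $\int_0^z u_n F'\,dy$. Since your primary Stieltjes argument is complete as it stands, this only affects the optional second route.
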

	\begin{proof}
		Consider the continuous function $u:[0,s]\to \R$, $u(z)=\int_0^z h(y)g(y)dy - h(z)\int_0^z g(y)dy$. The claimed inequality is equivalent to $u(z)\geq 0$ for all $z\in [0,s]$. To prove this, we argue by contradiction: assume there exists $z_0\in [0,s]$ such that $u(z_0)<0$. Because of $u(0)=0$, we have $z_0>0$. The continuity of $u$ implies that $a:=\sup\{z\in [0,z_0] |u(z)\geq 0 \}$ is in $[0,z_0)$ and satisfies $u(a)=0$ as well as $u(z)<0$ for all $z\in (a,z_0]$. 
		
		We now recall the definition of the (lower right-handed) Dini derivative. A good reference on this topic is Chapter 3 of \cite{Kannan1996}. Let $f:[a,b]\to \R$ be a function and $x\in [a,b)$. Then 
		\begin{equation}
			D_+ f(x):= \liminf_{t\to 0+} \frac{f(x+t)-f(x)}{t}
		\end{equation}
		is called the lower right-handed Dini derivative of $f$ at $x$. Similar definitions for the upper and left-handed Dini derivatives also exist. 
		A calculation shows that in our case we have
		\begin{equation}\label{Eq: Dini of u }
			D_+u(z)=-D_+h(z)\int_0^z g(y)dy 
		\end{equation}
		for $z\in [0,s)$. As $h$ is a decreasing function, we have $\frac{h(z+t)-h(z)}{t}\leq 0$ for all $t\in (0,s-z)$ and thus $D_+h(z)\leq 0$. For $z\in (a,z_0]$, we have $u(z)<0$, which is equivalent to 
		\begin{equation}
			\int_0^z h(y)g(y)dy<h(z)\int_0^z g(y)dy.
		\end{equation}
		The assumptions $h>0$ and $\int_0^z h(y)g(y)dy\geq 0$ for all $z\in [0,s]$ therefore imply 
		\begin{equation}
			\int_0^z g(y)dy>0
		\end{equation}
		for all $z\in (a,z_0]$. Combined with \eqref{Eq: Dini of u } and $D_+h\leq 0$, this yields 
		\begin{equation}
			D_+u(z)\geq 0 
		\end{equation}
		for all $z\in (a,z_0)$. Hence, $\restr{u}{[a,z_0]}$ is a continuous function with nonnegative Dini derivative $D_+u$ on $(a,z_0)$. This implies that $\restr{u}{[a,z_0]}$ is non-decreasing (see for example \cite{Kannan1996}, Theorem 3.4.4). Consequently, we have 
		\begin{equation}
			0=u(a)\leq u(z_0)<0,
		\end{equation}
		which is a contradiction. Thus, such $z_0$ cannot exist, proving $u(z)\geq 0$ for all $z\in [0,s]$.
	\end{proof}
	\begin{Rem}
		If we assume the function $h$ in the above Lemma to be differentiable, then so is the function $u$ used in the proof. In this case, one can compute the (usual) derivative of $u$  to show that it is increasing on $[a,z]$, simplifying the proof. 
	\end{Rem}
	
	\begin{proof}[Proof of \Cref{Case: round-flat thin bottle}]
		Here we are in the situation $\beta\in (\beta_0,\beta_1)$, $\phi_\beta$ is given by 
		\begin{equation}
			\phi_\beta(y)=	\begin{cases}
				\phi_0(y)\quad 		&\textnormal{for } \beta \leq s_\beta,\\
				\phi_0(s_\beta)	&\textnormal{for } \beta > s_\beta,
			\end{cases}	
		\end{equation}
		where the definition of $s_\beta$ ensures precisely $\int_0^\beta \phi_\beta(y)dy=\frac{\pi}{4}$. For   $z\leq s_\beta$, the restriction to the Möbius strip $M_z\subset K_\beta$ combined with Pu's inequality, similar to the arguments in the proofs of \Cref{Case: round-flat thick bottle,Case: round bottle}, gives us 
		\begin{equation}\label{Eq: Consequence of Pu for case 3}
			\int_0^z \phi(y)\phi_\beta(y) dy \geq \int_0^z \phi_\beta^2(y)dy. 
		\end{equation}
		Similar to the situation in \Cref{Case: flat bottle}, the vertical curve $\gamma$ ensures $\int_0^\beta \phi(y)dy\geq \frac{\pi}{4}$. 
		We write $\phi=\phi_{\beta}+\sigma$, for $\sigma:[0,\beta]\to \R$. \eqref{Eq: Consequence of Pu for case 3} thus reformulates to 
		\begin{equation}
			\int_0^z \phi_\beta^2(y) dy \leq \int_0^z (\phi(y)+\sigma(y))\phi_\beta(y)dy=\int_0^z \phi_{\beta}^2(y)dy+\int_0^z \sigma(y)\phi_{\beta}(y)dy,
		\end{equation}
		which implies 
		\begin{equation}
			\int_0^z \sigma(y)\phi_{\beta}(y)dy \geq 0, \quad \forall z\leq s_\beta.
		\end{equation}
		Hence, $h=\phi_\beta$ and $g=\sigma$ satisfy the assumptions of \Cref{Lemma: Decreasing function Lemma} on the interval $[0,s_\beta]$. Consequently, we find
		\begin{equation}\label{Eq: Result of the decr funct lemma}
			\int_0^{s_\beta} \sigma(y)\phi_{\beta}(y)dy\geq \phi_\beta(s_\beta)\int_0^{s_\beta}\sigma(y)dy. 
		\end{equation}
		The inequality $\int_0^\beta \phi(y)dy\geq \pi/4$ gives us
		\begin{equation}
			\pi/4\leq \int_0^\beta (\phi_\beta(y)+\sigma(y))dy=\int_0^\beta \phi_\beta(y)dy+\int_0^\beta \sigma(y)dy,
		\end{equation}
		which, in combination with $\int_0^\beta \phi_\beta(y)dy=\pi/4$, yields precisely $\int_0^\beta \sigma(y)dy\geq 0$. 
		Using that and \eqref{Eq: Result of the decr funct lemma}, we calculate
		\begin{align*}
			\int_0^\beta \phi(y)\phi_\beta(y)dy &= \int_0^\beta (\phi_\beta(y)+\sigma(y))\phi_\beta(y)dy = \int_0^\beta \phi_\beta^2(y)dy+\int_0^\beta \sigma(y)\phi_\beta(y)dy \\
			&= \int_0^\beta \phi_{\beta}^2(y)dy+ \int_0^ {s_\beta}\sigma(y)\phi_{\beta}(y)dy + \int_{s_\beta}^\beta \sigma(y)\underbrace{\phi_\beta(y)}_{=\phi_\beta(s_\beta)}dy \\
			&\geq  \int_0^\beta \phi_{\beta}^2(y)dy+ \phi_\beta(s_{\beta})\int_0^ {s_\beta}\sigma(y)dy + \phi_\beta(s_\beta) \int_{s_\beta}^\beta \sigma(y)dy \\
			&= \int_0^\beta \phi_{\beta}^2(y)dy+ \phi_\beta(s_\beta) \left(\int_0^ {s_\beta}\sigma(y)dy + \int_{s_\beta}^\beta \sigma(y)dy\right) \\
			&= \int_0^\beta \phi_{\beta}^2(y)dy+ \underbrace{\phi_\beta(s_\beta)}_{>0} \underbrace{\int_0^ {\beta}\sigma(y)dy}_{\geq 0 } \\
			&\geq \int_0^\beta \phi_{\beta}^2(y)dy,
		\end{align*}
		concluding the proof.
	\end{proof}

	\printbibliography
\end{document}